\title{Rigorous and heuristic treatment of sensitive singular perturbations arising in elliptic shells}
\author{
{Yuri V. Egorov} \footnote{Laboratoire MIP, Universit\'{e} Paul Sabatier, 31062 Toulouse Cedex 9, France.(egorov@mip.ups-tlse.fr)},
 {Nicolas Meunier} \footnote{Laboratoire MAP5,
Universit\'{e} Paris Descartes (Paris V), 45 Rue des Saints
P\`{e}res,75006 Paris, France. (nicolas.meunier@parisdescartes.fr)} { and
Evariste Sanchez-Palencia} \footnote{Laboratoire de Mod\'{e}lisation en M\'{e}canique, Universit\'{e} Pierre et
Marie Curie (Paris VI),
 4 place Jussieu,
75252 Paris,
FRANCE, (sanchez@lmm.jussieu.fr)}}
\newtheorem{theorem}{Theorem}[section]
\newtheorem{lemma}[theorem]{Lemma}
\newtheorem{proposition}[theorem]{Proposition}
\theoremstyle{definition}
\newtheorem{definition}[theorem]{Definition}
\theoremstyle{remark}
\newtheorem{remark}{Remark}
\numberwithin{equation}{section}
\newcommand{\D}{\ {\rm d}}
\newcommand{\Norm}[1]{\lVert #1 \rVert}
\newcommand{\scalprod}[2]{( #1 , #2 )}
\newcommand{\dualprod}[2]{ \langle #1 , #2 \rangle}
\newcommand{\R}{\mathbb{R}}
\newcommand{\C}{\mathbb{C}}
\newcommand{\Z}{\mathbb{Z}}
\begin{document}
\maketitle
\abstract{We consider singular perturbations of elliptic systems depending on a
parameter $\varepsilon$ such that, for $\varepsilon=0$ the
boundary conditions are not adapted to the equation (they do not
satisfy the Shapiro - Lopatinskii condition). The limit holds only in very abstract spaces out of distribution theory involving
complexification and non-local phenomena. This system appears in the thin shell theory when the middle surface is elliptic and the shell is fixed on a part of the boundary and free on the rest. We use a heuristic reasoning applying some simplifications which allow to reduce the original problem in a domain to another problem on its boundary. The novelty of this work is that we consider systems of partial differential equations while in our previous work we were dealing with single equations.
}

\section{Introduction}
This paper is devoted to a very singular kind of perturbation problems arising in thin shell theory. Up to our knowledge, it is disjoint of relevant and well known contributions of V. Mazya on perturbation of domains and multistructures for elliptic problems including the Navier - Stokes system (\cite{MazyaNazarov}, \cite{KozlovMazya}, \cite{MazyaSlutskii}), as the pathological feature of our problem is concerned with ill-posedness of the limit problem, generating singularities out of the distribution space. So, it may be considered as a contribution to enlarge perturbation theory of Mazya.  More precisely, the main purpose of this paper is to generalize the previous work done on equations, see \cite{EgorovMeunierSanchez}, \cite{MeunierSanchez} to systems of partial differential equations. The motivation for studying that kind of problems comes from the shell theory. It appears that when the middle surface is elliptic (both principal curvatures have same sign) and is fixed on a part $\Gamma _0$ of the boundary and free on the rest $\Gamma _1$, the "limit problem" as the thickness $\varepsilon $ tends to zero is elliptic, with boundary conditions satisfying Shapiro - Lopatinskii (SL hereafter) on $\Gamma _0$ but not satisfying it on $\Gamma_1$. In other words, the "limit problem" for $\varepsilon=0$ is
highly ill-posed. 
This pathological behavior arises only as $\varepsilon=0$. In fact, for $\varepsilon>0$ the
problem is "classical". 

In such kind of situations, the limit problem has no solution
within classical theory of partial differential equations, which
 uses distribution theory. It is sometimes possible to prove the
convergence of the solutions $u^{\varepsilon}$ towards some limit
$u^{0}$, but this "limit solution" and the topology of the
convergence are concerned with abstract spaces not included in the
distribution space.

The variational problem we are interested in is:
\begin{equation}\label{intro1}
\left\{\begin{array}{l}\textrm{Find } u^\varepsilon  \in V
\textrm{ such that, } \forall v \in V\\
a(u^\varepsilon ,v)+\varepsilon ^2 b(u^\varepsilon ,v)=\dualprod{f}{v},
\end{array}\right.
\end{equation}
or, equivalently, the minimization in $V$ of the functional 
$$a(u,u)+\varepsilon ^2 b(u,u)-2\dualprod{f}{u},$$
where $f \in V'$ is given and  the brackets denote the duality between $V'$ and $V$.

This is the Koiter model of shells, $\varepsilon $ denoting the relative thickness. The corresponding energy space $V$ is a classical Sobolev space. 

 The limit boundary partial differential system associated with (\ref{intro1}) when $\varepsilon =0$  is elliptic and ill-posed.

Let us consider formally the variationnal problem of the membrane problem (i.e. $\varepsilon =0$):
\begin{equation}\label{sensitiveconvergenceajout1}
\left\{\begin{array}{l}\textrm{Find } u  \in V_a
\textrm{ such that, } \forall v \in V_a\\
a(u ,v)=\dualprod{f}{v},
\end{array}\right.
\end{equation}
where
$V_a$ is the abstract completion of the "Koiter space" $V$  with the norm $ \| v\| _{a}=a(v,v)^{1/2}$, it is to be noted that   the elements of $V_a$ are not necessarly distributions. The term "sensitive" originates from the fact that this latter problem is unstable. Very small  and smooth variations of $f$ (even in $\mathcal{D}(\Omega)$) induce modifications of the solution which are large and singular (out of the distribution space).

The plan of the article is as follows. After recalling the Koiter shell model (Section \ref{Koiter}), we recall the definitions of ellipticity and the Schapiro-Lopatinskii condition for systems elliptic in the Douglis-Nirenberg sense (Section \ref{sectionSL}).  In Section \ref{4pb}, we study four systems of partial differential equations which are involved in our study of shell theory. These systems are the rigidity system, the membrane tension system, the membrane system and the Koiter shell system.

In section \ref{Sectionmodel}, we study a sensitive perturbation problem arising in Koiter linear shell theory and we briefly recall some abstract convergence results. In Section \ref{heuristics}, we report the heuristic procedure of \cite{EgorovMeunierSanchez}. In this latter article,  we addressed a model problem including
a variational structure, somewhat analogous to the shell problem studied here,
but simpler, as concerning an equation instead of a system. It is
shown that the limit problem involves in particular an elliptic
Cauchy problem. This problem was handled in both a rigorous (very
abstract) framework and using a heuristic procedure for exhibiting
the structure of the solutions with very small $\varepsilon$. The
reasons why the solution goes out of the distibution space as
$\varepsilon$ goes to $0$ are then evident. The heuristic procedure is very much analogous to the method of construction of a parametrix in elliptic problems \cite{Taylor}, \cite{Egorov}:

-Only principal (with higher differentiation order) terms are
taken into account.

-Locally, the coefficients are considered to be constant, their
values being frozen at the corresponding points.

-After Fourier transform ($x \to \xi$), terms with small
$\xi$ are neglected with respect to those with larger $\xi$ (which
amounts to taking into account singular parts of the solutions
while neglecting smoother ones). We note that this approximation,
aside with the two previous ones, lead to some kind of "local
Fourier transform" which we shall use freely in the sequel.

Another important feature of the heuristics is a previous
drastic restriction of the space where the variational problem is
handled. In order to search for the minimum of energy, we only
take into account functions such that the energy of the limit
problem is very small. This is done using a boundary layer method
within the previous approximations, i.e. for large $|\xi|$. This
leads to an approximate simpler formulation of the problem for
small $\varepsilon$, where  it is apparent that the
limit problem involves a smoothing operator and cannot have a
solution within distribution theory.


Notations are standard. We denote
\begin{equation}\label{m3}
\partial_{k}=\frac{\partial}{\partial x_{k}},\ \ k=1,2,
\end{equation}
and
\begin{equation}\label{m3bis}
D_{k}=-i\frac{\partial}{\partial x_{k}},\ \ k=1,2\textrm{ and }
D^\alpha =D_1^{\alpha _1}D_2^{\alpha _2}, \ \alpha=(\alpha _1,
\alpha _2 )\in \Z _+^2.
\end{equation}
Moreover, the definition of the Sobolev space $H^s(\Gamma)$, $s\in
\R$, where $\Gamma $ is a one dimensional compact manifold is
classical  using a partition of unity and local mappings.

The inner product and the duality products associated with a space $V$ and its dual $V'$
will be denoted by 
$\scalprod{.}{.}$ 
and $\dualprod{.}{.}$ respectively. 

The usual convention of summation of repeated indices is used. Greek and latin indices will belong to the sets $\{1,2\}$ and  $\{1,2,3\}$ respectively.


\section{Generalities on the Koiter shell model}\label{Koiter}
\noindent

Let $\Omega $ be a bounded open set of $\R^2$ with smooth boundary $\Gamma $. Let $\mathcal{E}^3$ be the euclidean space referred to the othonormal frame $(O,e_1, e_2, e_3)$. We consider the shell theory in the framework of the Koiter theory and more precisely the mathematical framework of this linear theory. The middle surface $S$ of the shell is the image in $\mathcal{E}^3$ of $\Omega $ for the map 
$$\varphi \ : \ (y^1,y^2) \in \overline{\Omega} \to \varphi (y) \in \mathcal{E}^3.$$

The two tangent vectors of $S$ at any point $y$  are given by:
$$a_\alpha = \partial _\alpha \varphi , \ \alpha \in \{1,2\},$$
where $\partial _\alpha $ denotes the differentiation with respect to $y^\alpha $,  while the unit normal vector is: 
$$a_3= \frac{a_1 \wedge a_2}{\Norm{a_1 \wedge a_2}}.$$
For simplicity, we omitted $y$ in the previous notation ($a_\alpha (y)$).

The middle surface $S$ is assumed to be smooth ($\mathcal{C}^\infty$) and we may consider in a neighbourhood of it a system of "normal coordinates" $y^1, y^2, y^3$, when $y^3$ is the normal distance to $S$. More precisely we consider a shell of constant thickness $\varepsilon  $, i.e. it is the set 
$$C=\{M \in \mathcal{E}^3, \ M=\varphi (y^1,y^2)+y^3a_3, \ (y^1,y^2)\in \Omega, -\frac{1}{2} \varepsilon <y^3<\frac{1}{2} \varepsilon \}.$$
Under these conditions, let $u=u(y^1,y^2)$ be the displacement vector of the middle surface of the shell. In the linear theory of shells, which is our framework here, the displacement vector is assumed to describe the first order term of the mathematical expression as the thickness $\varepsilon $ is small, see \cite{Ciarlet, San1}. 
\begin{remark}
In the sequel smooth should be understood in the sense of $\mathcal{C}^\infty$.
\end{remark}
\begin{remark}
We consider here the case where the surface is defined by only one chart but this could be easily generalized to the case of several charts (atlas). 
\end{remark}

More precisely, since we consider the case where  $u$ is supposed to be small, the Koiter theory is described in terms of the \textit{deformation tensor}  (or strain tensor) $\gamma _{\alpha \beta}$ of the middle surface:
$$\gamma _{\alpha \beta}=\frac{1}{2} (\tilde{a}_{\alpha \beta} -a_{\alpha \beta})$$
and the \textit{change of curvature tensor} $\rho _{\alpha \beta}$:
$$\rho _{\alpha \beta}=\tilde{b} _{\alpha \beta}-b _{\alpha \beta}.$$
In the previous definitions, the expressions $a _{\alpha \beta}$ (resp. $\tilde{a} _{\alpha \beta}$) denote the coefficients of the first fundamental form of the middle surface before (resp. after) deformation:
$$a _{\alpha \beta}=a _{\alpha } \cdot a _{ \beta}= \partial _{\alpha } \varphi \cdot \partial _{ \beta} \varphi,$$
and $b _{\alpha \beta}$ (resp. $\tilde{b} _{\alpha \beta}$) the coefficients of the second fundamental form  accounting for the curvatures before (resp. after) deformation:
$$b _{\alpha \beta}=-a _{\alpha } \cdot \partial _{ \beta}  a _{ 3}= a_3 \cdot \partial _{\beta } a_\alpha=a_3 \cdot \partial _{\alpha } a_\beta =b _{\beta \alpha},$$
due to the fact that $a _{\alpha } \cdot  a _{ 3}=0$.

The dual basis $a^i$ is defined by 
$$a_i \cdot a^j=\delta ^j_i,$$
where $\delta $ denotes the Kronecker symbol. The contravariant components $a^{i j}$ of the metric tensor are:
$$a^{ij}= a^i \cdot a^j,$$
and  $a_{i j}$  are used to write covariant components of vectors and tensors in the usual way. 
Finally, the tensors $\gamma $ and $\rho $ take the form:
\begin{eqnarray}\label{gamma}
\gamma _{\beta \alpha } (u)&=&\gamma _{\alpha \beta } (u)= \frac{1}{2} ( u _{\alpha| \beta}+ u _{\beta| \alpha})-b_{\alpha \beta} u_3,\\
\rho _{\alpha \beta } (u)&=& u _{3| \alpha \beta}+b^\lambda _{\beta  | \alpha}u _\lambda +b^\lambda _{\beta} u_{\lambda | \alpha}+b^\lambda _\alpha u _{\lambda | \beta} -b^\lambda _\alpha b_{\lambda \beta} u_3,\label{gammabis}
\end{eqnarray}
where $\partial _\alpha a_3=b_\alpha ^\gamma a_\gamma $, $b_\alpha ^\beta = a^{\beta \sigma } b_{\alpha \sigma}$, $_{.| \alpha}$ denotes the \textit{covariant differentiation} which is defined by 
\begin{equation}
\left\{\begin{array}{l}
u _{\alpha| \beta}=\partial _{ \beta }u_\alpha - \Gamma ^\lambda _{\alpha \beta }u_\lambda \\
u _{3| \beta }=\partial _{ \beta }u_3,
\end{array}\right.
\end{equation}   
and 
\begin{equation}
\left\{\begin{array}{l}
b^{ \lambda }_{\alpha| \beta}=\partial _{ \alpha }b_\beta ^{\lambda } +\Gamma ^\lambda _{\alpha \nu }b_{\beta }^{\nu} - \Gamma ^\nu _{\beta \alpha }b^{\lambda }_{\nu} =b^{ \lambda }_{\beta| \alpha}\\
u _{3| \alpha \beta}=\partial _{\alpha \beta }u_3 -\Gamma ^\lambda _{\alpha \beta }\partial _\lambda u_3 ,
\end{array}\right.
\end{equation}   
where $ \Gamma ^\alpha _{\beta \gamma }$ are the Christoffel symbols of the surface
$$   \Gamma ^\alpha _{\beta \gamma } =\Gamma ^\alpha _{\gamma \beta }= a^\alpha \cdot \partial _\beta  a_\gamma =a^\alpha \cdot \partial _\gamma  a_\beta .$$
   
Let us now define the energy of the shell in the Koiter framework. It consists of two  bilinear forms $a$ and $b$: $a$  corresponds to a \textit{membrane strain energy} and $b$ is a \textit{bending energy} (which acts as a perturbation term). More precisely, $a$ is defined by
\begin{equation}\label{defedeaajout}
a(u,v)=\int_S A^{\alpha \beta \lambda \mu}\gamma _{\lambda \mu } (u)\gamma _{\alpha \beta } (\overline{v}) \D s,
\end{equation}  
where $A^{\alpha \beta \lambda \mu}$ are the \textit{membrane rigidity} coefficients which we assume to be smooth on $\Omega$. Moreover, we assume that some symmetry holds
\begin{equation}\label{sym}
A^{\alpha \beta \lambda \mu}=A^{ \lambda \mu \alpha \beta}=A^{ \mu \lambda \alpha \beta  }.
\end{equation}

Defining the \textit{membrane stress tensors} by
\begin{equation}\label{defdeA}
T^{\alpha \beta }(u)= A^{\alpha \beta \lambda \mu} \gamma _{\lambda \mu } (u),
\end{equation}
using the symmetry of $\gamma$, we immediately see that 
\begin{equation}\label{symdeA1}
T^{\alpha \beta }(u)=T^{\beta \alpha }(u),
\end{equation}
and
\begin{equation}\label{symdeA2}
a(u,v)=\int_S T^{\alpha \beta }(u)\gamma _{\alpha \beta } (\overline{v}) \D s=\int_S \gamma_{\alpha \beta }(u)T ^{\alpha \beta } (\overline{v}) \D s.
\end{equation}

Furthermore, we assume that a coercivity condition holds uniformly on the surface:
\begin{equation}\label{coercivity}
A^{\alpha \beta \lambda \mu} \xi _{\alpha \beta } \xi _{\lambda \mu} \ge C \Norm{\xi}^2, \ C>0.
\end{equation}

\begin{remark}
It is to be noticed that there are two different symmetries on $A$: the first one $A^{\alpha \beta \lambda \mu}=A^{ \lambda \mu \alpha \beta}$ is necessary to exchange $u$ and $v$ in (\ref{symdeA2}) while the second $A^{ \lambda \mu \alpha \beta}=A^{ \mu \lambda \alpha \beta  }$ is used to obtain (\ref{symdeA1}) but is not necessary in order to obtain (\ref{symdeA2}) since we could use the symmetry of $ \gamma$.
\end{remark}

Analogously, we define the bilinear form $b$ which corresponds to the bending energy of the shell and which will act as a perturbation term:   
\begin{equation}
b(u,v)=\int_S B^{\alpha \beta \lambda \mu}\rho _{\lambda \mu } (u)\rho _{\alpha \beta } (\overline{v}) \D s,
\end{equation}  
where $B^{\alpha \beta \lambda \mu}$ are the \textit{bending rigidity} coefficients which we assume to be smooth on $\Omega$ and to have the same properties (\ref{sym}) and (\ref{coercivity}) as $A$, namely
\begin{equation}\label{symbending}
B^{\alpha \beta \lambda \mu}=B^{ \lambda \mu \alpha \beta}=B^{ \mu \lambda \alpha \beta  },
\end{equation}
and
\begin{equation}\label{coercivitybending}
B^{\alpha \beta \lambda \mu} \xi _{\alpha \beta } \xi _{\lambda \mu} \ge C \Norm{\xi}^2
\end{equation}
uniformly on the surface.   

Similarly to $a$ we can write  
\begin{equation}\label{defdeB1}
b(u,v)=\int_S M^{\alpha \beta }(u)\rho _{\alpha \beta } (\overline{v}) \D s,
\end{equation}
where the \textit{bending stress tensors} are
\begin{equation}\label{defdeB}
M^{\alpha \beta }(u)=B^{\alpha \beta \lambda \mu} \rho _{\lambda \mu } (u).
\end{equation}

In this work, we will restrict ourselves to the case of elliptic surface, i.e. we will always assume that the coefficients $b_{\alpha \beta }$ are such that 
\begin{equation}\label{elliptic}
b_{11}b_{22}-b_{12}^2 >0 \textrm{ uniformly on } S \textrm{ and } b_{11}>0.
\end{equation}

Let us finish this introduction by topoligical considerations,  the boundary $\partial \Omega = \Gamma _0
\cup \Gamma _1$ is assumed to be smooth (i.e. of class $\mathcal{C}^\infty$) in the variable $y=(y^1,y^2)$, where $\Gamma _0 $
and $\Gamma _1$ are disjoint; they are one-dimensional compact
smooth manifolds without boundary, then diffeomorphic to the unit circle.

We consider the following variational problem (which has possibly only a formal sense)
\begin{equation}\label{pbvar}
\left\{\begin{array}{l}\textrm{Find } u ^{\varepsilon }\in V \textrm{ such that, }
\forall v \in V\\
a(u^{\varepsilon } ,v)+\varepsilon ^2 b(u^{\varepsilon }, v) =\dualprod{f}{v},
\end{array}\right.
\end{equation}
with $a$ and $b$ defined by (\ref{defedeaajout}) and (\ref{defdeB1})
where the space $V$ is
the "energy space" with the essential boundary conditions on $\Gamma _0$
\begin{equation}\label{defdeV}
V=\{v; \ v_\alpha  \in H^1(\Omega ), \ v_3 \in H^2(\Omega); \ v_{| \Gamma _0}
=0 \textrm{ in the sense of trace}\}.
\end{equation}
\begin{remark}
The essential boundary conditions on $\Gamma _0$
(\ref{defdeV}) corresponds to the case of the fixed boundary of the shell. Other boundary conditions could have been considered such as:
\begin{equation}\label{defdeV2}
V=\{v; \ v_\alpha  \in H^1(\Omega ), \ v_3 \in H^2(\Omega); \ v_{| \Gamma _0}
=0, \ \partial _\nu v_{3| \Gamma _0}
=0 \textrm{ in the sense of trace}\},
\end{equation}
where $\nu $ is the normal to $\Gamma _0$ (i.e. the normal to the boundary which lies in the tangent plane), which corresponds to the clamped case.
\end{remark}

The following Lemma was obtained by Bernardou and Ciarlet see \cite{Ciarlet}.

\begin{lemma}\label{CB}
The bilinear form $a+b$ is coercive on $V$.
\end{lemma}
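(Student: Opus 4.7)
The plan is to reduce coercivity to a Korn-type inequality on the surface and then use a compactness-uniqueness argument together with the clamping condition on $\Gamma_0$.

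First I would use the uniform coercivity hypotheses (\ref{coercivity}) and (\ref{coercivitybending}) on the elasticity tensors $A^{\alpha\beta\lambda\mu}$ and $B^{\alpha\beta\lambda\mu}$ to bound
\begin{equation*}
a(u,u)+b(u,u)\ \ge\ C\Bigl(\sum_{\alpha,\beta}\Norm{\gamma_{\alpha\beta}(u)}_{L^2(S)}^2+\sum_{\alpha,\beta}\Norm{\rho_{\alpha\beta}(u)}_{L^2(S)}^2\Bigr).
\end{equation*}
So everything reduces to showing that this right-hand side controls $\sum_\alpha\Norm{u_\alpha}_{H^1(\Omega)}^2+\Norm{u_3}_{H^2(\Omega)}^2$ on $V$.

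Next I would establish a Korn-type inequality on the surface with lower-order remainder, namely that there is a constant $C$ such that for every $u$ with $u_\alpha\in H^1$, $u_3\in H^2$,
\begin{equation*}
\sum_\alpha\Norm{u_\alpha}_{H^1}^2+\Norm{u_3}_{H^2}^2\ \le\ C\Bigl(\sum\Norm{\gamma_{\alpha\beta}(u)}_{L^2}^2+\sum\Norm{\rho_{\alpha\beta}(u)}_{L^2}^2+\sum\Norm{u_\alpha}_{L^2}^2+\Norm{u_3}_{H^1}^2\Bigr).
\end{equation*}
The point is that the principal parts of $\gamma_{\alpha\beta}$ and $\rho_{\alpha\beta}$ in (\ref{gamma})--(\ref{gammabis}) are, respectively, the symmetric gradient of $(u_1,u_2)$ and the Hessian of $u_3$; the remaining terms (involving $b_{\alpha\beta}$ and $\Gamma^\lambda_{\alpha\beta}$) are of lower differential order and can be absorbed on the right through $\sum\Norm{u_\alpha}_{L^2}^2+\Norm{u_3}_{H^1}^2$. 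For the symmetric-gradient part one invokes the classical two-dimensional Korn inequality, and for the Hessian part one simply uses that $\rho_{\alpha\beta}-u_{3,\alpha\beta}$ is at most of first order in $u_3$ and zeroth order in $u_\alpha$.

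Finally I would remove the lower-order terms by a Peetre--Tartar/Lions-type compactness-uniqueness argument. If the desired coercivity failed, there would exist a sequence $u^n\in V$ with $\Norm{u^n}_V=1$ and $a(u^n,u^n)+b(u^n,u^n)\to 0$. By the bound above, $u^n$ is bounded in $V$; Rellich's theorem gives, along a subsequence, strong convergence of $u^n_\alpha$ in $L^2$ and of $u^n_3$ in $H^1$, together with a weak limit $u^*\in V$. Passing to the limit yields $\gamma_{\alpha\beta}(u^*)=0$ and $\rho_{\alpha\beta}(u^*)=0$, so $u^*$ is an infinitesimal rigid displacement of the surface $S$; but $u^*|_{\Gamma_0}=0$ by closedness of the trace condition, and since $\Gamma_0$ is a one-dimensional manifold on which an infinitesimal rigid displacement of a smooth surface vanishes identically only if $u^*\equiv 0$, we deduce $u^*=0$. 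The Korn-type inequality combined with the strong convergence in lower norms then forces $\Norm{u^n}_V\to 0$, contradicting $\Norm{u^n}_V=1$.

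The main obstacle is the Korn step in the middle paragraph, since the change-of-curvature tensor $\rho_{\alpha\beta}$ couples $u_3$ and the tangential components $u_\lambda$ in a way that is not immediately diagonal; the argument relies on the fact that this coupling appears only at a strictly lower differentiation order than the principal terms, so it can be controlled by the lower-order remainder and absorbed in the final compactness step.
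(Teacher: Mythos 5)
The paper offers no proof of this lemma; it simply cites Bernardou and Ciarlet (\cite{Ciarlet}). Your sketch reconstructs the standard argument from that reference — a Korn-type inequality on the surface with lower-order remainder, followed by a Peetre--Tartar compactness-uniqueness step — and the overall shape of what you propose is correct.

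Two steps are glossed over. In the Korn step, the terms $b^\lambda_\beta u_{\lambda|\alpha}$ and $b^\lambda_\alpha u_{\lambda|\beta}$ appearing in $\rho_{\alpha\beta}$ (see (\ref{gammabis})) are first order in $u_\lambda$, which is the same order as the principal part of $\gamma_{\alpha\beta}$, not lower; they therefore cannot simply be "absorbed into the remainder" $\sum\Norm{u_\alpha}_{L^2}^2+\Norm{u_3}_{H^1}^2$ in one pass. The correct argument is sequential: first bound $\sum\Norm{u_\alpha}_{H^1}^2$ by $\sum\Norm{\gamma_{\alpha\beta}(u)}_{L^2}^2$ plus $L^2$ remainders via the planar Korn inequality, and only then use that bound to control the $u_{\lambda|\alpha}$ terms in $\rho$ and obtain the $H^2$ estimate for $u_3$. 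In the uniqueness step, your claim that an infinitesimal rigid displacement of a smooth surface vanishing on the one-dimensional set $\Gamma_0$ must vanish identically is false for a general surface: a rotation $u=b\times x$ vanishes on any straight segment parallel to $b$. What saves the argument is precisely the ellipticity hypothesis (\ref{elliptic}): a surface of positive Gaussian curvature contains no straight segment, so $\varphi(\Gamma_0)$ contains three non-collinear points and the rigid displacement is forced to vanish. Alternatively, and more economically within the paper's own toolkit, Lemma~\ref{rigide} closes this step directly: $\gamma_{\alpha\beta}(u^*)=0$ together with $u^*_1=u^*_2=0$ on $\Gamma_0$ already gives $u^*=0$ by Cauchy uniqueness for the first-order elliptic rigidity system, with no need to invoke $\rho_{\alpha\beta}(u^*)=0$ or the classification of infinitesimal rigid displacements.
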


We shall denote by $V'$ the dual space of $V$. Here dual is obviously understood in the abstract sense of the space of continuous linear functionals on $V$. In order to make explicit computations in terms of equation and boundary conditions, we shall often take $f$ as a "function" defined on $\Omega $, in the space
\begin{eqnarray}\label{4.5}
& & \{f \in H^{-1}(\Omega ; \R  )\times H^{-1}(\Omega ; \R  )\times H^{-2}(\Omega ; \R  ) ; \\
& &  \ f \textrm{ "smooth" in a neighbourhood
of  }\Gamma _1 \} \subset V', \nonumber
\end{eqnarray}
where "smooth" means allowing classical integration by parts.
Obviously other choices for $f$ are possible.

Moreover, we immediately obtain the following result.
\begin{proposition}\label{coerciviteCB}
For $\varepsilon >0$ and for $f $ in $V'$, the variational problem (\ref{pbvar}) is of Lax-Milgram type and
it is a self-adjoint problem which has a coerciveness constant larger than $c \varepsilon ^2$, with $c>0$.
\end{proposition}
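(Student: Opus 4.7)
The plan is to apply the Lax–Milgram theorem to the sesquilinear form $\mathcal{A}_\varepsilon := a + \varepsilon^2 b$ on the Hilbert space $V$, with the three hypotheses of continuity, Hermitian symmetry, and coerciveness checked in turn, the last one being where the $\varepsilon^2$ scaling appears.

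First I would note that $V$, equipped with the product norm inherited from $H^1(\Omega)\times H^1(\Omega)\times H^2(\Omega)$, is a closed subspace of a Hilbert space (the trace condition on $\Gamma_0$ is closed), hence a Hilbert space. The continuity of $\mathcal{A}_\varepsilon$ on $V\times V$ is immediate by Cauchy–Schwarz together with the smoothness and boundedness of the coefficients $A^{\alpha\beta\lambda\mu}$, $B^{\alpha\beta\lambda\mu}$ and the Christoffel symbols: indeed the deformation tensor $\gamma_{\alpha\beta}(u)$ in (\ref{gamma}) involves only first derivatives of the tangential components and zeroth-order terms in $u_3$, while $\rho_{\alpha\beta}(u)$ in (\ref{gammabis}) involves at most second derivatives of $u_3$ and first derivatives of the tangential $u_\lambda$, so both $a(u,v)$ and $b(u,v)$ are controlled by $\|u\|_V\,\|v\|_V$.

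Next, the Hermitian symmetry $\mathcal{A}_\varepsilon(u,v)=\overline{\mathcal{A}_\varepsilon(v,u)}$ follows directly from the symmetries $A^{\alpha\beta\lambda\mu}=A^{\lambda\mu\alpha\beta}$ in (\ref{sym}) and $B^{\alpha\beta\lambda\mu}=B^{\lambda\mu\alpha\beta}$ in (\ref{symbending}), combined with the realness of the coefficients and of $\gamma_{\alpha\beta}$, $\rho_{\alpha\beta}$ as operators. This yields the self-adjoint character of the associated operator.

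The core step is the coerciveness estimate. By Lemma \ref{CB} there exists $c_0>0$ such that
\begin{equation*}
a(u,u)+b(u,u)\ge c_0\,\|u\|_V^{\,2}\qquad\forall u\in V.
\end{equation*}
The pointwise coercivity assumption (\ref{coercivity}) on $A^{\alpha\beta\lambda\mu}$ ensures $a(u,u)\ge 0$, so for any $\varepsilon\in(0,1]$ one gets
\begin{equation*}
\mathcal{A}_\varepsilon(u,u)=a(u,u)+\varepsilon^2 b(u,u)\ge \varepsilon^2\bigl(a(u,u)+b(u,u)\bigr)\ge c_0\,\varepsilon^2\,\|u\|_V^{\,2},
\end{equation*}
which is the claimed lower bound with $c=c_0$. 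Together with continuity, this allows Lax–Milgram to deliver a unique $u^\varepsilon\in V$ solving (\ref{pbvar}) for every $f\in V'$. I do not expect any real obstacle here: the only subtle point is that one must use $a\ge 0$ to convert the uniform bound from Lemma \ref{CB} into one proportional to $\varepsilon^2$, which reflects the fact that the bending energy is the genuine regularizing term as $\varepsilon\to 0$.
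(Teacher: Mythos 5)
Your argument is correct and is precisely the one the paper has in mind when it says the result follows ``immediately'' from Lemma~\ref{CB}: the key observation is that since $a(u,u)\ge 0$ (by (\ref{coercivity})) and $\varepsilon\le 1$, one has $a+\varepsilon^2 b\ge\varepsilon^2(a+b)\ge c_0\varepsilon^2\|u\|_V^2$, with symmetry coming from (\ref{sym}) and (\ref{symbending}) and continuity from boundedness of the coefficients. Nothing to add, except perhaps to note explicitly that the restriction $\varepsilon\in(0,1]$ is harmless here since $\varepsilon$ is a relative thickness, hence small.
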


\begin{remark}
It is to be noticed that the coerciveness of the previous problem disapears when $\varepsilon =0$.
\end{remark}


\section{The ellipticity of systems and the Shapiro-Lopatinskii condition}\label{sectionSL}
\noindent

In this section, we recall some classical results on the linear boundary value problems for elliptic systems in the sense of Douglis and Nirenberg \cite{DN}. We begin with the definition of ellipticity for systems, then we recall the Shapiro-Lopatinskii
condition. This latter condition states which boundary conditions are well suited in order to have well posed problems for elliptic systems. We then recall in what sense an elliptic system with  Shapiro-Lopatinskii condition is "well-behaved". 

For brevity, from now on we will denote SL the Shapiro-Lopatinskii condition.

\subsection{Elliptic systems in the sense of Douglis and Nirenberg \cite{DN}}\label{sectionElliptic}
\noindent

In this work, we shall  deal with systems of $l$ ($l=3$ or $l=6$)
equations with $3$ unknowns (noted here $u_1,u_2,u_3$) defined on an open set $\Omega \subset \R^2$ with smooth boundary, which has the form:
\begin{equation}\label{system}
l_{kj}u_j=f_k , \  k=1, \dots ,  l,
\end{equation}
or equivently $L\textbf{u}=f$. 
The coefficients $l_{kj}(x, D )$ with $D = (D_1, D_2)$ and $D_l = -i \frac{\partial }{\partial x _l}$, $l \in \{1,2\}$,  are linear differential operators with real smooth coefficients. In our systems (\ref{system}),  the highest order of differentiation is different for the three unknowns and depends on the equation.  A way to take into account such differences between the various equations and unknowns is to define integer indices $(s_1,s_2,s_3)$ attached to the equations and integer indices $(t_1,t_2,t_3)$ attached to the unknowns (see Douglis and Nirenberg \cite{DN}) so that the "higher order terms" (which will be called "principal terms") are in equation $j$ the terms where each unknown "k" appears by its derivative of order $s_k+t_j$.
More precisely,  the  integers $(s_k, t_j)$ are such that 
$$
\left\{\begin{array}{l}\textrm{if } s_k+t_j  \ge 0, \textrm{ the order of } l_{kj} \textrm{ is less or equal to }s_k+t_j,\\
\textrm{if } s_k+t_j <0, \ l_{kj} \textrm{ is equal to zero.}  
\end{array}\right.
$$
The \textit{principal part} $l'_{kj}$ of $l_{kj}$ is obtained by keeping the terms of order $s_k+t_j$ if $s_k+t_j \ge 0$ and by taking $l_{kj}'=0$ if $s_k+t_j <0$.  The  matrix $L'(x, \xi)$, $\xi=(\xi_1, \xi_2)\in \R^2$, obtained by substituting $ \xi _\alpha $ for $D _\alpha $ in $l'_{kj}$, is called the \textit{principal symbol of the system}. Since $l'_{kj}$ are homogeneous of order $s_k+t_j$ with respect to $\xi _\alpha$, the determinant of the matrix $L'(x,\xi)$, denoted $\textrm{D}(x, \xi)$,  is  homogeneous of degree $\Sigma _k s_k +\Sigma _j t_j$. 
\begin{definition}
The system (\ref{system}) is \textit{elliptic in the sense of Douglis and Nirenberg} at the point $x \in \Omega$ if and only if 
\begin{equation}\label{conditionelli}
\textrm{D}(x, \xi) \neq 0, \ \ \forall \xi \in \R^2\setminus \{0\}.
\end{equation}
\end{definition}
\begin{remark}
Since the coefficients are assumed to be real, the function $\textrm{D}(x,\xi)$ for an elliptic system is even in $\xi$ of order $2m$ with 
$$\Sigma _k s_k +\Sigma _j t_j=2m.$$
\end{remark} 
\begin{remark}
The definition of the indices $s_j$ and $t_k$ for a system is slightly ambiguous. Indeed the result is exactly the same after adding an integer $n$ to the indices $s_j$ and substracting $n$ from the $t_k$.
\end{remark}
\begin{remark}\label{ellipticityremark}
Let $x_0  \in  \Omega $ be such that the system (\ref{system}) is not elliptic, then there exists a $\xi \in \R^2\setminus \{0\}$ such that $\textrm{D}(x_0, \xi )=0$. In such a case the system $L'(x_0, D ) u=0$, with frozen coefficients at $x_0$ admits a solution of the form $u(x)=v e^{i\xi x}$, with  $v \in \R ^3\setminus \{0\}$. 
\end{remark}

\begin{remark}
Moreover, throughout this paper, \textit{ellipticity} will be understood in the sequel as uniform, i.e.  there exists a positive constant $A$ such that 
$$A^{-1} \Sigma _{\alpha } | \xi_\alpha  |^2 \le   |\det L'(x, \xi)  |\le A \Sigma _\alpha   |\xi_\alpha   |^2,$$
for all $x\in \Omega $ and $\xi =(\xi_1, \xi _2) \in \R^2$. 
\end{remark}

\subsection{Shapiro-Lopatinskii conditions for elliptic systems in the sense of Douglis and Nirenberg \cite{DN}}\label{sectionSL3}
\noindent

From now on, for simplicity,  we will say that a system is \textit{elliptic} when it is elliptic  in the sense of Douglis and Nirenberg \cite{DN}.

Let $l_{kj}$ ($L$) be an elliptic system of order $2m$ with principal part $l'_{kj}$ ($L'$) and let $m$ boundary conditions be given by:
$$b_{kj} u_j=g_k, \ k \in \{1,..,m\},$$
where $b_{kj}(x, D )$ are differential operators with smooth coefficients. Let us define the integers $r_k$ (indices of the boundary conditions, $k=1,..,m$) such that 
$$
\left\{\begin{array}{l} \textrm{if }r_k+t_j\ge 0, \textrm{ the order of } b_{kj} \textrm{ is less or equal to } r_k+t_j \\
\textrm{if } r_k+t_j <0, \ b_{kj} \textrm{ is equal to zero.}  
\end{array}\right.
$$
The principal part $b'_{kj}$ is $b_{kj}$ if $r_k+t_j \ge 0$ and zero otherwise.

Assume that the smooth real coefficients are defined in
$\overline{ \Omega }$.

Let $x_0 \in \Gamma $, we assume that $L'$ is elliptic 
at $x_0$. Usually, see \cite{ADN} and \cite{Egorov} for instance, the SL condition at $x_0$ is defined via a local diffeomrophism sending a neighbourhood of $x_0$ in $\Omega $ into a neighbourhood of the origin in a half-plane. For ulterior computations, it is worth-while to take a special diffeomorphism which amounts to taking locally cartesian coordinates $x_1,x_2$, respectively, tangent and (inwards) normal to the boundary at $x_0$. We then consider only the principal parts of the equations and of the boundary conditions frozen at $x_0$. 
Next, we consider the corresponding boundary value problem obtained by formal tangential
Fourier transform (i.e. $D_1 \to  \xi_1$, with $\xi_1 \in \R$ and $u \to \tilde{u}$) which amounts to the following algebraic conditions:
\begin{equation}\label{SL3}
\left\{\begin{array}{l}\tilde{l}'_{kj}(x_0,\xi_1, D_2)\tilde{u}=0 \textrm{ for } x_2>0 \\
\tilde{b}'_{kj}(x_0,\xi_1, D_2)\tilde{u}=\tilde{g}_j 
\textrm{ for } x_2=0, 
\end{array}\right.
\end{equation}
$ j,k \in \{1,...,m\}$, see \cite{EgorovMeunierSanchez} Sec. 3.2 for details, if necessary.

The problem (\ref{SL3}) involves a system of ordinary differential equations with constant coefficients of the variable $x_2 \in \R^+$ and $m$ boundary conditions at $x_2=0$, whose solutions are classically a linear combination of terms of the form:
\begin{equation}\label{jordan}
\tilde{u}(\xi _1,x_2)=\left\{\begin{array}{l} v e^{i \xi _2 x_2}, \ v \in \C ^3 \\
P(x_2)  e^{i \xi _2 x_2},\textrm{  where } P\textrm{  is a polynomial, in the case of Jordan block. } 
\end{array}\right. 
\end{equation}

Recalling that the system $L$ is elliptic, it follows that 
the imaginary part of $\xi _2$ does not vanish. Furthermore,  
there are $m$  solutions $\xi_2$ of  $D(x_0, \xi_1, \xi_2)=0$ with positive imaginary part 
that we denote $\xi_2^+$ (and $m$ with negative imaginary part denoted $\xi_2^-$).  

We then try to solve (\ref{SL3}) using only linear combinations of the $m$ solutions of the form (\ref{jordan}) for the $m$ roots $\xi _2 ^+$ (i.e. exponentially decreasing towards the domain).

\begin{definition}
The SL condition is satisfied at $x_0 \in \Gamma$ if  one of the following equivalent conditions holds:
\begin{enumerate}
\item The solution of the previous problem
is defined uniquely.
\item Zero is the only solution of the homogeneous (i.e. with $g_j=0$) 
previous problem.
\end{enumerate}
\end{definition}
\begin{remark}
The two conditions (which are equivalent) of the previous definition are clearly equivalent to the non annulation of the determinant of the corresponding algebraic "system".
\end{remark}
\begin{remark}
The reason for defining the SL condition amounts to the possibilty of solving the problem in a half plane via tangential Fourier transform. The reason for not considering the $\xi_2^-$ roots is that, for $x_2>0$, they should give exponentially growing Fourier transforms in $x_1\to \xi_1$, which are not allowed in distribution theory (note that $\xi _1$ and $\xi _2$ are proportional as $D(\xi)$ is homogeneous).
\end{remark}

The verification of the SL condition is often tricky. In some situations, we can use equivalent expressions which are simpler to treat. More precisely, define the function 
$u$ by $u(x_1,x_2)=\tilde{u}(\xi _1,x_2)e^{i |\xi _1| x_1}$, with $\tilde{u}(\xi _1,x_2) =v e^{i \xi _2^+ x_2} $ (or  expressed as exponential polynomial in the case of Jordan block), it  is an exponentially decreasing function in the direction inwards the domain (when $x_2 \to + \infty$), it is also a periodic function in the tangential direction $x_1$ and it satisfies 
\begin{equation}\label{SL3bis}
\left\{\begin{array}{l}\tilde{l}'_{kj}(x_0,D_1, D_2)u=0 \textrm{ for } x_2>0 \\
\tilde{b }'_{kj}(x_0,D_1, D_2)u=g_j 
\textrm{ for } x_2=0, 
\end{array}\right.
\end{equation}
$ j,k \in \{1,...,m\}$.
The following proposition is very useful in the case where ellipticity is linked with positive energy integrals obtained by integrating by parts. For instance, we have:
\begin{proposition}
Consider the homogeneous problem associated with (\ref{SL3bis}) (i.e. taking $g_j=0$) for $x_0 \in \Gamma$. If any solution $u$, which is periodic in the tangential direction $x_1$ and exponentially decreasing in the direction $x_2$ inwards the domain, is zero,  then the SL condition is satisfied. 
\end{proposition}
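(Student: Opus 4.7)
The plan is to argue by contraposition: I would show that if the SL condition fails at $x_0$, then one can produce a nonzero function $u$ that is periodic in the tangential variable $x_1$, exponentially decreasing in the inward normal variable $x_2$, and that solves the homogeneous version of (\ref{SL3bis}). This directly contradicts the hypothesis and yields the conclusion.

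First, I would unpack the failure of SL using condition (2) of the definition. If SL does not hold at $x_0$, then there exists some $\xi_1 \in \R \setminus \{0\}$ for which the tangentially Fourier-transformed homogeneous problem (\ref{SL3}) (with $\tilde g_j=0$) admits a nonzero solution $\tilde u(\xi_1,x_2)$, which by (\ref{jordan}) is a finite linear combination of terms of the form $P(x_2)e^{i\xi_2^+ x_2}$ for the $m$ roots $\xi_2^+$ with positive imaginary part. Since $L'$ has real coefficients and the determinant $D(x_0,\xi)$ is homogeneous and even in $\xi$, replacing $\xi_1$ by $-\xi_1$ leaves the algebraic data invariant, so I may assume without loss of generality that $\xi_1>0$ and hence $|\xi_1|=\xi_1$.

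Next, I would form the lift $u(x_1,x_2) := \tilde u(\xi_1,x_2)\, e^{i|\xi_1|x_1}$, following exactly the construction already discussed in the paragraph preceding (\ref{SL3bis}). Three properties have to be verified. Periodicity in $x_1$ with period $2\pi/|\xi_1|$ is immediate from the explicit exponential factor. Exponential decay as $x_2\to +\infty$ follows because each summand in $\tilde u$ carries a factor $e^{i\xi_2^+ x_2}$ with $\mathrm{Im}\,\xi_2^+>0$, and any polynomial prefactor arising from Jordan blocks does not spoil the decay. Finally, to see that $u$ solves the homogeneous version of (\ref{SL3bis}), I would use that the operators $\tilde l'_{kj}(x_0,D_1,D_2)$ and $\tilde b'_{kj}(x_0,D_1,D_2)$ have constant (frozen) coefficients, so that applying them to $u$ factors out $e^{i|\xi_1|x_1}$ and leaves $\tilde l'_{kj}(x_0,|\xi_1|,D_2)\tilde u$ and $\tilde b'_{kj}(x_0,|\xi_1|,D_2)\tilde u$, which vanish by construction of $\tilde u$ thanks to the reduction $|\xi_1|=\xi_1$ in the previous step.

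I expect the only real point of care to be the passage from the tangential Fourier symbol $\xi_1$ to the real exponent $|\xi_1|$ in the lifted function: the construction works precisely because the algebraic SL problem is symmetric under $\xi_1\mapsto -\xi_1$, so that choosing the sign of $\xi_1$ in the initial failure of SL is harmless. Everything else is a routine verification of the three defining properties.
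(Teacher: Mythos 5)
Your contraposition argument is correct and is precisely the approach the paper has in mind: the paragraph preceding the proposition constructs, from any tangential-Fourier mode violating SL, the lifted function $u = \tilde{u}(\xi_1,\cdot)\,e^{i|\xi_1|x_1}$ and records exactly the three properties you verify, so the proposition is effectively an unwinding of that paragraph.

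One small refinement on the point you flag. The symmetry under $\xi_1 \mapsto -\xi_1$ is not a literal invariance of the algebraic data; it is a conjugation symmetry: since the coefficients are real and each principal entry is homogeneous, if $\tilde{u}(\xi_1,x_2)$ is a nonzero decaying solution of the homogeneous (\ref{SL3}) at $\xi_1$, then $\overline{\tilde{u}(\xi_1,x_2)}$ is a nonzero decaying solution at $-\xi_1$. That is what makes the WLOG reduction to $\xi_1 > 0$ legitimate. An even simpler route is to drop the absolute value and lift by $e^{i\xi_1 x_1}$ directly: this is still $2\pi/|\xi_1|$-periodic in $x_1$, and then $\tilde{l}'_{kj}(x_0,D_1,D_2)u = e^{i\xi_1 x_1}\,\tilde{l}'_{kj}(x_0,\xi_1,D_2)\tilde{u} = 0$ holds for either sign of $\xi_1$ with no case distinction. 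The paper's $|\xi_1|$ in the exponent is best read as notational (tied to the period of the strip used later), and your proof is sound either way.
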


\begin{remark}
In order to have well-posed problems for elliptic systems, boundary conditions satisfying the SL condition should be prescribed at any points of the boundary.  Their number  is half the total order of the system. 
\end{remark}
\begin{remark}
The specific boundary conditions may differ from a point to another on the boundary. In particular, each connected component of the boundary may have its own set of boundary conditions. Otherwise, local changes of boundary conditions (as well as non-smoothness of the boundary) induces local singularities. 
\textbf{A changer}
\end{remark}

\subsection{Some results for "well posed" elliptic systems}
\noindent

Let us now consider a boundary value problem formed by an elliptic system with boundary conditions satisfying the SL condition. In what sense is this problem "well-behaved"? The obvious example of an eigenvalue problem, even for an equation shows that uniqueness is only ensured up to the kernel formed by the eigenvectors associated with the zero eigenvalue, whereas existence involves compatibility conditions (orthogonality to the kernel of the adjoint problem). The general results are those of  Agmon, Douglis and Nirenberg  \cite{ADN}.

First, let us recall the definition of a Fredholm operator.
\begin{definition}\label{defopindice}
Let $E$ and $F$ be two Hilbert spaces and $A$ an operator (closed with dense domain in $E$) from $E$ into $F$. We say that $A$ is a Fredholm operator if and only if the following three conditions hold:
\begin{enumerate}
\item $\textrm{Ker}(A) $ is of finite dimension,
\item $\textrm{R}(A)$ is closed,
\item $\textrm{R}(A)$ is of finite codimension.
\end{enumerate}
The operator $A$ is also said to be an index operator, the index is defined as $\textrm{dim Ker}(A)-\textrm{codim R}(A)$.
\end{definition}

Let us consider an elliptic system of order $2m$ whose coefficients are smooth:
\begin{equation}\label{systemgeneral}
\left\{\begin{array}{l}l_{kj} u_j=f_k, \ j,k \in \{1,...,l\} \textrm{ in } 
\Omega \\
b_{hj} u_j=g_h, \ h \in \{1,...,m\} \textrm{ on } \partial \Omega ,
\end{array}\right.
\end{equation} 
whose indices associated with unknowns, equations and boundary conditions are respectively  $t_j,s_j, r_j$. 
Let $\rho$  be a "big enough" 
real number, called regularity index. Consider operator (\ref{systemgeneral}) as a linear operator from the space $E$ to the space $F$ defined by:
\begin{equation}\label{defespaces}
E= \Pi_{j=1}^l H^{\rho +t_j}(\Omega), \ F= \Pi_{j=1}^l H^{\rho -s_j}(\Omega)\times  \Pi_{j=1}^m H^{\rho -r_j-\frac{1}{2}}(\partial \Omega).
\end{equation}
The real $\rho$ is chosen in order to give a sense to the traces which are involved, i.e. it is  such that $\rho -r_j-1/2>0$ for $j \in \{1,...,m\}$.

The following result is the main result of the theory of Agmon, Douglis and Nirenberg:
\begin{theorem}[Agmon, Douglis and Nirenberg  \cite{ADN}]\label{ThADN}
Let $\Omega $ be a bounded open set with smooth boundary $\Gamma$. Let us consider an elliptic system with boundary conditions satisfying the SL condition everywhere on $\Gamma $.
Assume that the coefficients of the system are smooth and that $u,f$ and $g$ satisfy (\ref{systemgeneral}). Then the following estimate holds true:
\begin{equation}\label{ADNinegalite}
\Norm{u}_E \le C(\Norm{(f,g)}_F +\Norm{u}_{(L^2(\Omega ))^l}),
\end{equation}
where $C$ does not depend on $u,f,g$. Moreover, the operator defined by (\ref{systemgeneral}) from the space $E$ to the space $F$, given by (\ref{defespaces}), is a Fredholm operator, for all value of $\rho$ such that  $\rho -r_j-1/2>0$ for $j \in \{1,...,m\}$. Furthermore, the dimension of the kernel and the dimension of the subspace  orthogonal to the range do not depend on $\rho$. The kernel is composed of smooth functions.
\end{theorem}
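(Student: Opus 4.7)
The plan is to follow the classical Agmon--Douglis--Nirenberg parametrix strategy, separating the proof into (i) an a priori estimate obtained by patching local parametrices, (ii) the deduction of the Fredholm property from this estimate together with Rellich compactness, and (iii) a bootstrap argument for the smoothness of the kernel and for $\rho$-independence.

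For the a priori estimate (\ref{ADNinegalite}), I would work locally, cover $\overline{\Omega}$ by finitely many coordinate patches, and construct in each a parametrix for the boundary value problem (\ref{systemgeneral}). In an interior patch around $x_0 \in \Omega$, I freeze the coefficients of $L$ at $x_0$ and invert the principal symbol $L'(x_0,\xi)$, which is possible for $\xi \neq 0$ by the uniform ellipticity assumption; standard pseudodifferential calculus then gives a local inverse modulo a smoothing operator, together with the interior part of (\ref{ADNinegalite}). In a boundary patch around $x_0 \in \Gamma$, I flatten the boundary as in Section \ref{sectionSL3}, freeze the coefficients of $L$ and of the $b_{hj}$ at $x_0$, and apply a tangential Fourier transform $D_1 \to \xi_1$; for each $\xi_1 \neq 0$ I am reduced to the ODE problem (\ref{SL3}), whose unique solvability with appropriate bounds is precisely the SL condition. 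Assembling the resulting symbol into a boundary parametrix, patching via a smooth partition of unity, and absorbing the error from nonconstant and lower-order coefficients by interpolation into $\Norm{u}_{(L^2(\Omega))^l}$ gives (\ref{ADNinegalite}). The main technical obstacle here is showing that the inverse of the boundary symbol, built from the stable roots $\xi_2^+$ of $\mathrm{D}(x_0,\xi_1,\xi_2)=0$ and the SL matrix, defines a symbol of the correct order uniformly in $x_0 \in \Gamma$; this rests on the uniform ellipticity and on the non-vanishing of the SL determinant along the compact boundary.

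Granted (\ref{ADNinegalite}), the Fredholm property follows by a standard argument. The compact embedding $E \hookrightarrow (L^2(\Omega))^l$ (Rellich's theorem, valid since $\Omega$ is bounded with smooth boundary) combined with (\ref{ADNinegalite}) shows that the unit ball of $\mathrm{Ker}$ is relatively compact in $E$, so $\mathrm{Ker}$ is finite-dimensional. Closedness of the range is obtained by decomposing $E = \mathrm{Ker} \oplus \mathrm{Ker}^{\perp}$ and proving that on $\mathrm{Ker}^{\perp}$ the $\Norm{u}_{(L^2)^l}$-term on the right of (\ref{ADNinegalite}) can be absorbed: otherwise a normalized sequence $u_n \in \mathrm{Ker}^\perp$ with $(Lu_n, Bu_n) \to 0$ would, by Rellich, possess an $L^2$-convergent subsequence, hence by (\ref{ADNinegalite}) a subsequence converging in $E$ to a nonzero element of $\mathrm{Ker} \cap \mathrm{Ker}^\perp$, a contradiction. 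Finite codimension of the range is obtained by applying the same reasoning to the formal adjoint system, which is elliptic with SL boundary conditions as well (a classical but nontrivial verification, carried out as in \cite{ADN}).

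It remains to show that the kernel is smooth and that $\dim \mathrm{Ker}$ and $\mathrm{codim}\,\mathrm{R}$ are independent of $\rho$. If $u \in E$ satisfies $Lu = 0$ and $Bu = 0$, then applying the local parametrix of the first step shows $u \in \Pi_j H^{\rho+t_j+1}(\Omega)$; iterating this gain-of-one-derivative bootstrap yields $u \in C^\infty(\overline{\Omega})$. Consequently the kernel of the operator viewed in any admissible $E$ coincides with the kernel viewed in $C^\infty$, hence does not depend on $\rho$. The same regularity argument applied to the adjoint system shows that the orthogonal of the range is spanned by smooth solutions of the adjoint problem, and is therefore also independent of $\rho$. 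This completes the proof.
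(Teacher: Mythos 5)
The paper does not prove this theorem: it is stated as a recalled classical result, attributed directly to Agmon, Douglis and Nirenberg \cite{ADN}, and the text that follows (Remarks~\ref{remarkADN}--\ref{remarkADN3}) merely extracts consequences of it. So there is no ``paper's own proof'' to compare against.

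That said, your sketch is a faithful and essentially complete outline of the classical ADN argument that the cited reference carries out in full: local parametrices (interior symbol inversion by uniform ellipticity; boundary symbol inversion by tangential Fourier transform plus the SL condition on the stable half of the characteristic roots), patching by a partition of unity with the commutator errors absorbed into the $L^2$ term, Rellich compactness to get finite kernel and closed range, the formal adjoint to get finite codimension, and elliptic bootstrap for smoothness of the kernel and $\rho$-independence. Two points deserve a bit more care than your sketch gives them. First, the ``gain of one derivative'' in the bootstrap is stated as if the operator were of homogeneous order; in the Douglis--Nirenberg setting the gain is index-dependent, so one should say instead that the parametrix gives $u_j \in H^{\rho+t_j+1}$ whenever the data live one Sobolev step higher, which is still enough to iterate. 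Second, the step ``the formal adjoint system is elliptic with SL boundary conditions'' is genuinely the delicate core of the finite-codimension argument (it involves identifying the correct adjoint boundary operators and checking the complementing condition for them), and you are right to flag it, but a complete proof would need this carried out rather than asserted. Neither point is a gap in the logic --- both are handled in \cite{ADN} --- but they are where the real technical work lies.

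Finally, your statement of the closed-range argument via a normalized sequence in $\mathrm{Ker}^{\perp}$ is the standard Peetre-type absorption argument and is correct; one small imprecision is that the contradiction produces a nonzero element of $\mathrm{Ker}\cap\mathrm{Ker}^{\perp}$ only after passing to a further subsequence converging strongly in $E$, which is what (\ref{ADNinegalite}) plus the $L^2$-convergence supplies; you implicitly use this but it is worth making explicit.
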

\begin{remark}\label{remarkADN}
The previous theorem means that in general existence and uniqueness of the solution only hold up to a finite number of compatibility conditions for $f$ and $g$ and existence of the solution holds up to a finite dimension kernel. More precise properties need specific properties of the system. 
\end{remark}
\begin{remark}\label{noyau}
For all values of $\rho $, the kernel formed by the eigenvectors corresponding to the eigenvalue $0$ is of finite dimension and is composed of smooth functions, independent of $\rho$ (in $\mathcal{C}^\infty (\overline{\Omega})$). 
\end{remark}

\begin{remark}\label{remarkADN2}
Denote $A$ the operator defined by (\ref{systemgeneral}) in the spaces $E$ and $F$. Let us consider the case where $\textrm{dim Ker}(A) >0$ and
define the inverse $B$ of $A$ as a closed operator from $\textrm{R}(A)$ to  $E/\textrm{Ker}(A)$, we have that
\begin{equation}\label{estimation}
\Norm{\tilde{u}}_{E/\textrm{Ker}(A)}\le C \Norm{(f,g)}_F,
\end{equation}
where $\tilde{u}$ is an element of the equivalence class of $u$.

The element $\tilde{u}$ can also be viewed as an element of the orthogonal of $\textrm{Ker}(A)$ in $E$, which is identified with $E/\textrm{Ker}(A)$. In such a case, there exists a unique $(\tilde{u}, \hat{u}) \in  E/\textrm{Ker}(A) \times \textrm{Ker}(A)$ such that 
$$u=\tilde{u}+ \hat{u}.$$
Since $\textrm{Ker}(A)$ is of finite dimension, all the norms are equivalent and we can choose for $\hat{u}$ a norm in a space $H^{-\nu}$ with $\nu$ very big. Therefore, inequality (\ref{ADNinegalite}) can be rewritten as 
\begin{equation}\label{ADNinegalite2bis}
\Norm{u}_E \le C(\Norm{\tilde{u}}_{E/\textrm{Ker}(A)} +\Norm{\hat{u}}_{H^{-\nu}})\le C(\Norm{\tilde{u}}_{E/\textrm{Ker}(A)} +\Norm{u}_{H^{-\nu}}),
\end{equation}
for $\nu $  big enough such that $E \subset H^{-\nu}$. Recalling (\ref{estimation}), we then deduce that 
\begin{equation}\label{ADNinegalite2}
\Norm{u}_E \le C(\Norm{(f,g)}_F +\Norm{\hat{u}}_{H^{-\nu}}).
\end{equation}
Moreover, the norm in $H^{-\nu}$ may be replaced by a seminorm, provided it is a norm on Ker($A$).
\end{remark}
\begin{remark}\label{remarkADN3}
In the case where $\textrm{dim Ker}(A) =0$, the inverse $B$ of the operator  $A$ is well defined on $R(A)$. It is a closed operator, hence it is bounded and the following estimate holds:
\begin{equation}\label{ADNinegalite3}
\Norm{u}_E \le C\Norm{(f,g)}_F .
\end{equation}
\end{remark}

\section{Study of four systems involved in shell theory}\label{4pb}
\noindent

In this section, we study four systems, denoted by \textit{rigidity} system, \textit{membrane tension} system, \textit{membrane} system and \textit{Koiter shell} system, which will appear in the sequel. We prove that these four systems satisfy the ellipticity condition and we study some boundary conditions. It is to be noticed that the boundary conditions may be different on $\Gamma_0$ and $\Gamma _1$ which are supposed to be disjoints.

Let us recall the situation: $\Omega $ is a connected bounded open set of $\R^2$ with $\mathcal{C}^\infty$ boundary $\Gamma = \Gamma_0 \cup \Gamma _1$ and $\Gamma _0 \cap \Gamma _1 =\emptyset $. The middle surface $S$ of the shell is the image in $\mathcal{E}^3$ of $\Omega $ for the map 
$$\varphi \ : \ (y^1,y^2) \in \overline{\Omega} \to \varphi (y) \in \mathcal{E}^3.$$
We assume that the ellipticity assumption of the surface holds:
\begin{equation*}\label{elliptic2}
b_{11}b_{22}-b_{12}^2 >0 \textrm{ uniformly on } \Omega.
\end{equation*}

\subsection{The rigidity system}
\noindent

Let us begin with the \textit{rigidity} system defined by  
$\gamma_{\alpha \beta }(u)$:
\begin{equation}\label{rigidity1}
\left\{\begin{array}{l}\gamma _{11 } (u):=  \partial _1u _{1}-\Gamma _{11}^\alpha u_\alpha-b_{11} u_3\\
\gamma _{22 } (u):=\partial _2u _{2}-\Gamma _{22}^\alpha u_\alpha-b_{22} u_3 \\
\gamma _{12 } (u):= \frac{1}{2} ( \partial _2u _{1}+\partial _1 u _{2})-\Gamma _{12}^\alpha u_\alpha-b_{12} u_3.\end{array}\right.
\end{equation}
Clearly $u_\alpha $ and $u_3$ play very different roles as $u_\alpha $ appears with derivatives whereas $u_3$ only appears without. Therefore take $(1,1,0)$ as the indices of the unknowns $(u_1,u_2,u_3)$ and $(0,0,0)$ as equation indices in the order $(\gamma _{11 },\gamma _{22 },\gamma _{12 })$. The principal system is obtained by substituting $0$ for $\Gamma _{\lambda \mu}^\alpha$ but keeping $b_{\lambda \mu }$.

\begin{lemma}\label{DNrigidite}
Do to the ellipticity assumption of the surface (\ref{elliptic}), the \textit{rigidity} system $\gamma$ 
is elliptic of total order $2$ on $\Omega $.
\end{lemma}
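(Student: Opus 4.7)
The plan is to apply directly the Douglis--Nirenberg ellipticity definition with the indices suggested in the paragraph preceding the lemma, i.e.\ $(t_1,t_2,t_3)=(1,1,0)$ for the unknowns $(u_1,u_2,u_3)$ and $(s_1,s_2,s_3)=(0,0,0)$ for the equations $(\gamma_{11},\gamma_{22},\gamma_{12})$. With this choice the entry $l_{kj}$ of row $k$ and column $j$ must be retained in the principal part only when its order equals $s_k+t_j$. This singles out the first-derivative terms in $u_\alpha$ and the zero-order terms $-b_{\alpha\beta}u_3$; the terms $-\Gamma^\alpha_{\lambda\mu}u_\alpha$ are of order $0<s_k+t_\alpha=1$ and therefore drop out of the principal part.

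Next I would write down the resulting principal symbol matrix explicitly. After substituting $\xi_\alpha$ for $D_\alpha$ (equivalently $i\xi_\alpha$ for $\partial_\alpha$) one obtains
\begin{equation*}
L'(x,\xi)=\begin{pmatrix} i\xi_1 & 0 & -b_{11} \\ 0 & i\xi_2 & -b_{22} \\ \tfrac{i\xi_2}{2} & \tfrac{i\xi_1}{2} & -b_{12} \end{pmatrix}.
\end{equation*}
A direct cofactor expansion along the first row gives
\begin{equation*}
\mathrm{D}(x,\xi)=\det L'(x,\xi)=-\tfrac{1}{2}\bigl(b_{22}\xi_1^2-2b_{12}\xi_1\xi_2+b_{11}\xi_2^2\bigr),
\end{equation*}
which is homogeneous of order $2=\sum_k s_k+\sum_j t_j$, confirming the announced total order.

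The key step is then to recognize that the bracketed quadratic form in $\xi$ is exactly the one whose discriminant is $4(b_{12}^2-b_{11}b_{22})$; by the standing ellipticity assumption (\ref{elliptic}) this discriminant is strictly negative and $b_{11}>0$ (hence $b_{22}>0$), so the form is positive definite in $\xi$ and $\mathrm{D}(x,\xi)\ne 0$ for every $\xi\in\R^2\setminus\{0\}$ and every $x\in\Omega$. Invoking moreover the uniform sign and boundedness of $b_{\alpha\beta}$ on $\overline{\Omega}$ yields the uniform two-sided bound $A^{-1}|\xi|^2\le|\mathrm{D}(x,\xi)|\le A|\xi|^2$ required for the uniform ellipticity demanded in the earlier remark.

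I do not expect a genuine obstacle: the only subtle point is the correct bookkeeping of the Douglis--Nirenberg indices, making sure that the Christoffel contributions are indeed subprincipal while the curvature terms $-b_{\alpha\beta}u_3$ survive in the symbol (because $s_k+t_3=0$). Once that is set, the determinant reduces to the quadratic form in $b_{\alpha\beta}$ and the surface ellipticity condition (\ref{elliptic}) delivers the conclusion at once.
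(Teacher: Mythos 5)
Your proof is correct and follows exactly the route taken in the paper: adopt the Douglis--Nirenberg indices $(t_j)=(1,1,0)$, $(s_k)=(0,0,0)$, write the principal symbol by dropping the Christoffel terms while keeping $-b_{\alpha\beta}u_3$, compute the determinant (the paper's $D(x,\xi)=2b_{12}\xi_1\xi_2-b_{22}\xi_1^2-b_{11}\xi_2^2$ is twice your value because they scaled the third equation by $2$), and invoke (\ref{elliptic}) to see it is a nondegenerate quadratic form. Your positive-definiteness argument via the discriminant is the correct way to make this explicit; note that the published proof asserts $D(x,\xi)>0$ where the sign should in fact be negative, but this is immaterial since only $D(x,\xi)\neq 0$ is needed.
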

\begin{proof} 
Substitute $-i \xi _\alpha$ for $ \partial _\alpha $  in the principal system, we obtain a system whose determinant is  $D(x,\xi )=2b_{12}\xi_1 \xi_2-b_{22}\xi _1^2 -b_{11} \xi_2^2 $, hence due to  the ellipticity hypothesis (\ref{elliptic}), for all $x \in \Omega $, we have
$$D(x,\xi )>0. $$
\end{proof}

\subsubsection{Cauchy boundary conditions}
\noindent 

It is classical that the Cauchy problem associated with elliptic system is not well posed in the sense that it does not enjoy existence, uniqueness and stability of solutions. Nevertheless, the Cauchy problem associated with the rigidity system will be involved in the sequel and we study it now. In particular, we shall need the following uniqueness theorem for solutions $u \in H^1\times H^1\times L^2$.

\begin{lemma}\label{rigide}
Under the ellipticity assumption of the surface (\ref{elliptic}), the system $\gamma _{\alpha \beta } (u)=0$ on $\Omega$ with the boundary conditions $u_1=u_2=0$ on a part of the boundary (of positive measure) admits a unique solution which is $u=0$. 
\end{lemma}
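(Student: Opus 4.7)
The strategy is to eliminate $u_3$ and reduce the Cauchy problem to one for an elliptic first-order $2\times 2$ system in the tangential components $(u_1,u_2)$, and then to invoke a boundary unique continuation theorem for such systems in the plane. The ellipticity hypothesis (\ref{elliptic}) ensures $b_{11}>0$ uniformly on $\overline{\Omega}$, so the equation $\gamma_{11}(u)=0$ can be inverted algebraically in $u_3$ to give $u_3 = b_{11}^{-1}(\partial_1 u_1 - \Gamma_{11}^\alpha u_\alpha)$, an identity which is meaningful for $u \in H^1\times H^1\times L^2$.

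Substituting this expression into the remaining equations $\gamma_{22}(u)=0$ and $\gamma_{12}(u)=0$ yields a first-order system $P(x,D)(u_1,u_2)^T = 0$ on $\Omega$ whose principal symbol has determinant equal, up to a nonzero scalar factor depending only on $b_{11}$, to the quadratic form $b_{22}\xi_1^2 - 2b_{12}\xi_1\xi_2 + b_{11}\xi_2^2$. This form is positive definite for every $\xi\neq 0$ by the very same computation as in the proof of Lemma~\ref{DNrigidite}, so $P$ is uniformly elliptic on $\overline{\Omega}$; moreover $(u_1,u_2)\in H^1(\Omega)^2$ vanishes on a subset of $\Gamma$ of positive one-dimensional measure.

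The remaining step is to invoke boundary unique continuation for this reduced elliptic $2\times 2$ system in two variables. The natural route is the Bers--Vekua theory of generalized analytic functions: after a smooth pointwise linear change of basis in the target adapted to the principal symbol, the complex unknown $w = u_1 + i\lambda(x) u_2$ (with $\lambda$ a smooth function determined by the eigenstructure of $\sigma(P)$) satisfies a perturbed Cauchy--Riemann equation $\partial_{\bar z}w + A\,w + B\,\bar w = 0$ with bounded measurable coefficients, and the similarity principle represents $w = e^{\phi}\Phi$ with $\phi$ bounded and continuous and $\Phi$ holomorphic. The vanishing of $w$ on the boundary arc transfers to $\Phi$, and a boundary uniqueness theorem of F.~and~M.~Riesz type forces $\Phi\equiv 0$, hence $u_1 = u_2 \equiv 0$ in $\Omega$ and finally $u_3 \equiv 0$ from the displayed formula above. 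This final step is the principal obstacle: both the global reduction to the Vekua normal form and the justification of boundary unique continuation in the prescribed $H^1$ regularity class require care (and are more delicate than the interior analogue, which would follow from standard Carleman estimates). By contrast, the algebraic elimination of $u_3$ and the verification of ellipticity for the reduced system $P$ are entirely routine.
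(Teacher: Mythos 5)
Your overall strategy coincides with the paper's up to the crucial final step, and it is precisely there that you leave a genuine gap. Like the paper, you eliminate $u_3$ from $\gamma_{11}(u)=0$ using $b_{11}>0$, obtain a first-order elliptic $2\times 2$ system in $(u_1,u_2)$, and invoke the Bers--Vekua similarity principle to write the associated pseudo-analytic function as $e^{\phi}\Phi$ with $\Phi$ holomorphic. Where you diverge is in how you conclude $\Phi\equiv 0$: you propose to feed the boundary vanishing of $\Phi$ directly into an F.~and~M.~Riesz-type boundary uniqueness theorem, and you yourself flag this as ``the principal obstacle'' because neither the global reduction to Vekua normal form nor the boundary unique continuation in the $H^1$ class is straightforward. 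That worry is well-founded: a Riesz-type theorem requires $\Phi$ to lie in a Hardy class with non-tangential boundary values, and the $H^1(\Omega)$ regularity you have for $(u_1,u_2)$ does not give that for free on the arc where they vanish.

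The paper sidesteps this entirely by a zero-extension trick that you do not use. Since $(u_1,u_2)\in H^1$ vanish (in the trace sense) on a portion $\Gamma$ of the boundary, their extension by zero across $\Gamma$ remains $H^1$ on an enlarged domain, and, because the reduced system is first order, the extended pair still solves it there. Interior elliptic regularity then upgrades the extension to $\mathcal{C}^\infty$ in the interior of the enlarged domain; the associated pseudo-analytic function is therefore smooth and vanishes on an open set (the exterior part). The similarity principle representation $\tilde{w}=e^{\delta}f$ with $f$ holomorphic then forces $f\equiv 0$ by ordinary analytic continuation from an open set, with no boundary behaviour of holomorphic functions to control at all. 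This is the step your proposal is missing: replace your appeal to boundary uniqueness for $\Phi$ by the zero-extension followed by interior unique continuation, and the argument closes cleanly. Without it, the boundary-uniqueness route would still need a separate justification that the reduced first-order system has the required boundary regularity, which is more delicate than the interior statement and is not supplied.
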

\begin{proof}
Let us assume that $v\in H^1(\Omega )\times H^1(\Omega) \times H^2(\Omega) $ is such that  $ \gamma _{\alpha \beta}(v)=0$ and $v_1=v_2=0$ 
on a part of the boundary.
Thanks to the ellipticity hypothesis (\ref{elliptic}), we know that 
$b_{11}\neq 0$ 
on $\overline{\Omega}$. We can eliminate $v_3$ from the first and third equations ($\gamma _{11}(v)=0$ and $\gamma_{22}(v)=0$) of the system $\gamma$. This yields the system of two equations for two unknowns $(v_1,v_2)$:
\begin{equation}\label{syst222}
\left\{\begin{array}{l}
0=\partial _2v _{2}-\Gamma _{22}^\alpha v_\alpha-\frac{b_{22} }{b_{11}}( \partial _1v _{1}-\Gamma _{11}^\alpha v_\alpha) \\
0= \frac{1}{2} ( \partial _2v _{1}+\partial _1 v _{2})-\Gamma _{12}^\alpha v_\alpha
-\frac{b _{12}}{b_{11}}( \partial _1v _{1}-\Gamma _{11}^\alpha v_\alpha)
.\end{array}\right.
\end{equation}
The eliminated unknown being then given by:
$$v_3=  \frac{1}{b_{11}}( \partial _1v _{1}-\Gamma _{11}^\alpha v_\alpha).$$
The problem then reduces to the uniqueness in the class $H^1(\Omega)$ of $(v_1,v_2)$ satisfying 
\begin{equation}\label{gamma33bis0}
\left\{\begin{array}{l}  \partial _1v _{1}-b_{11} v_3=0\\
\partial _2v _{2}-b_{22} v_3 =0\\
\frac{1}{2} ( \partial _2v _{1}+\partial _1 v _{2})-b_{12} v_3=0,\end{array}\right.
\end{equation}
with $v_1=v_2=0$ on a part of the boundary. This problem is more or less classical. Under analyticity hypotheses about the coefficients and the boundary, the uniqueness follows from Holmgren local uniqueness theorem and analytic continuation (as $u_1,u_2$ are in this case analytic inside $\Omega$). Under the $\mathcal{C}^\infty$ hypotheses adopted here, uniqueness follows from theory of pseudo-analytic functions. There are two nearly equivalent theories of such functions attached to the names of L. Bers (see for instance supplement of chapter IV of  \cite{CouHil}, written by Bers himself) and I.N. Vekua see \cite{Vekua}.   

Let $(v_1,v_2)$ be a solution of (\ref{gamma33bis0}) vanishing on a part $\Gamma$ of the boundary. Let $(\tilde{v}_1,\tilde{v}_2)$ be an extension of $(v_1,v_2)$ with values zero to an extended domain across $\Gamma$. Classically $(\tilde{v}_1,\tilde{v}_2)$ satisfies the same system (\ref{gamma33bis0}) on the extended domain and, according to interior regularity theory for elliptic systems, is of class $\mathcal{C}^\infty$ inside it. The function $\tilde{w}=\tilde{v}_1+i\tilde{v}_2$ is pseudo-analytic, of class $\mathcal{C}^\infty$ and vanishes on the outer region of the extended domain. We then use either theorem 3.5 of \cite{Vekua}, p. 146, which gives directly the uniqueness or the representation theorem of \cite{CouHil} p. 379. In this case, $\tilde{w}(z)$ admits the expression (here $z=x_1+ix_2$):
$$\tilde{w}(z)=e^{\delta (z)}f(z),$$  
where $f(z)$ is analytic and $\delta (z)$ is continuous. As $e^{\delta (z)}$ vanishes nowhere, the uniqueness follows.
\end{proof}

\begin{remark}
Strictly speaking, the evoked theorems of pseudo-analytic functions apply to systems with principal part of the canonical form
\begin{equation}\label{gamma33bis1}
 \left\{\begin{array}{l}  \partial _1v _{1}-\partial _2  v_2=...\\
\partial _2v _{1}+\partial_1 v_2 =...,\end{array}\right.
\end{equation}
so that the classical reduction to this form (see for instance \cite{CouHil} p. 169-170) should be previously considered. But obviously, this does not modify the $\mathcal{C}^\infty$ regularity inside the domain.
\end{remark}
Let us make several comments about this uniqueness result.
\begin{remark}
This result, known as the infinitesimal rigidity of the surface, 
does not depend on the curvilinear coordinates.
\end{remark}
\begin{remark}\label{instabilite}
The key ingredients of the previous uniqueness result 
are a uniqueness theorem for the Cauchy problem for elliptic systems of two equations of order 1.
It is not based upon a coercivity assumption for an elliptic system. 
But we know that the Cauchy problem for elliptic systems is precarious in the sense that it does not enjoy existence, uniqueness and stability of solutions. This means that such a system could lead to instability in the sense that there could exist $v_1,v_2, v_3$ very "big" in usual spaces such that $\gamma _{\alpha \beta}(v)$ are very "small".
\end{remark}

\subsubsection{Boundary value problems for the \textit{rigidity} system}
\noindent

From now on, we will consider the frame $(O,a_1,a_2,a_3)$ to be orthonormal on the boundary and such that $u_t=(u_1,0,0)$ and $u_n=(0,u_2,0)$, where $u_t$ denotes the component of $u$ in the tangential direction to the boundary and $u_n$ is the component of $u$ in the normal direction to the boundary and in the tangent plane. This point which is not absolutely necessary, implies a special local parametrization.
\begin{lemma}\label{DNdirichlet}
The  boundary condition $u_1=g$
satisfies the SL condition for the system $\gamma$. 
\end{lemma}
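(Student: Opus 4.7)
The plan is to apply the recipe of Section \ref{sectionSL3}: freeze coefficients at $x_0 \in \Gamma$, pass to local tangent-normal coordinates with $x_2$ pointing inward, take the tangential Fourier transform, and check that the only solution of the homogeneous problem which decays as $x_2 \to +\infty$ is zero.

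I would start from the principal symbol $L'(x_0,\xi)$ of $\gamma$, which is read off directly from the proof of Lemma \ref{DNrigidite}: its determinant is a nonzero scalar multiple of the positive definite quadratic form $Q(\xi) = b_{11}\xi_2^2 - 2b_{12}\xi_1\xi_2 + b_{22}\xi_1^2$. Fixing $\xi_1 \ne 0$, the equation $Q(\xi_1,\xi_2) = 0$ has two simple complex conjugate roots; call $\xi_2^*$ the one with positive imaginary part. Because the total order is $2m = 2$ we need exactly $m = 1$ boundary condition, and because the root $\xi_2^*$ is simple, no Jordan block appears in (\ref{jordan}); the decaying-solution space of $\tilde L'(x_0,\xi_1,D_2)\tilde u = 0$ is the one-dimensional family $\{v\, e^{i\xi_2^* x_2} : v \in \ker L'(x_0,\xi_1,\xi_2^*)\}$, and this kernel is itself one-dimensional since, for instance, the top-left $2\times 2$ minor of $L'(x_0,\xi_1,\xi_2^*)$ equals $-\xi_1 \xi_2^* \neq 0$.

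The heart of the proof is then a short linear-algebra elimination. I assign the boundary operator $u \mapsto u_1$ the Douglis-Nirenberg index $r_1 = -1$, so that its principal part is the pointwise evaluation $v \mapsto v_1$; SL at $x_0$ then reduces to verifying that any $v \in \ker L'(x_0,\xi_1,\xi_2^*)$ with $v_1 = 0$ must vanish. The first row of $L'(x_0,\xi_1,\xi_2^*)v = 0$ reads $-i\xi_1 v_1 - b_{11} v_3 = 0$; combined with $v_1 = 0$ and $b_{11} > 0$, it yields $v_3 = 0$. The second row, $-i\xi_2^* v_2 - b_{22} v_3 = 0$, combined with $v_3 = 0$ and $\xi_2^* \neq 0$, yields $v_2 = 0$. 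The third row is then automatically satisfied, so $v = 0$ and SL is verified.

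No real obstacle arises: the whole argument relies only on (i) the simplicity of the characteristic root $\xi_2^*$ and (ii) the strict positivity of $b_{11}$ at $x_0$, both of which are built into the ellipticity hypothesis (\ref{elliptic}). The one point requiring a modicum of care is the bookkeeping of the DN indices for the boundary operator, in order to be sure that the principal part of the constraint $u_1 = g$ is indeed the pointwise scalar condition $v_1 = 0$.
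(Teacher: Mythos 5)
Your argument is correct and follows the same route as the paper: freeze the coefficients at $x_0$, pass to tangent--normal coordinates, look for an exponentially decaying solution $Ue^{i\zeta x_2+i\xi_1 x_1}$, and eliminate $U_3$ (via row 1 and $b_{11}>0$) and then $U_2$ (via row 2) once the boundary condition forces $U_1=0$. The only addition is that you make explicit the simplicity of the root $\xi_2^*$ (hence no Jordan block) and the one-dimensionality of $\ker L'$, facts the paper's terse proof uses implicitly; this is sound bookkeeping but not a different method.
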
 
\begin{proof}
We take as index of the boundary condition $r=-1$. 
Let $x_0$ belong to $\Gamma $. As explained in Section \ref{sectionSL3}, using a partition of unity, local mappings, with axes $y_1$ tangential and $y_2$  inwards $\Gamma $, dropping lower
order differential terms, we obtain a new system:
\begin{equation}\label{gamma33bis}
\textrm{For }y_2>0, \ \left\{\begin{array}{l}  \partial _1u _{1}-b_{11} u_3=0\\
\partial _2u _{2}-b_{22} u_3 =0\\
\frac{1}{2} ( \partial _2u _{1}+\partial _1 u _{2})-b_{12} u_3=0.\end{array}\right.
\end{equation}
We look for solutions which are exponentially decreasing when $y_2 \to +\infty$ of the form:
$$u(y_1,y_2)= U e^{i \zeta y_2+i \xi _1 y_1}, \ \xi_1 \in \R\setminus \{0\},$$
with $U = \left(\begin{array}{l} U_1\\ U_2 \\ U_3\end{array}\right)\in \C ^3$, $\textrm{Im}(\zeta ) >0$.
Substituting this solution into (\ref{gamma33bis}) and
using the boundary condition we have $U_1=0$. Consequently, $u_1=0$  everywhere and (\ref{gamma33bis}) gives also $u_2=u_3=0$.
$U_2=U_3=0$. 
\end{proof}
\begin{remark}\label{bijectionajout}
Similarly to the proof of the previous result, we can prove that the following boundary conditions satisfy the SL condition:
\begin{enumerate}
\item $u_2=g$ (take $r=-1$). 
\item $u_3=g$ (take $r=0$). 
\end{enumerate}
\end{remark}

\begin{remark}\label{resultatutile}
Since $\Gamma _0$ and $\Gamma _1$ are disjoints and thanks to the previous statements, the boundary value problem 
\begin{equation}\label{pbbienpose}
 \left\{\begin{array}{l}  
 \gamma _{\alpha \beta} (u)=0 \textrm{ on } \Omega ,\\
u _2=0 \textrm{ on } \Gamma _0 ,\\
u_3=\tilde{u} \textrm{ on } \Gamma _1.\end{array}\right.
\end{equation}
is  "well posed" in the Agmon, Douglis and Nirenberg sense. Recalling Theorem \ref{ThADN} and Remark \ref{remarkADN}, together with standard regularity theory for elliptic systems, it follows that $u$ is of class $C^{\infty}$ on $\Omega \cup \Gamma _0$ for any $\tilde{u}$ (either smooth or not). Consequently, up to a kernel of  finite dimension composed of smooth functions belonging to $\mathcal{C}^\infty (\overline{\Omega })^3$ (and eventually up to a compatibility condition (to belong to the range of the operator which is a closed subspace of finite codimension), the space $\{v, \ \gamma_{\alpha \beta }(v)=0 \textrm{ on } \Omega, \ v_n =0 \textrm{ on } \Gamma _0\}$ is isomorphic with the space   $\mathcal{C}^\infty(\Gamma _1)$. The previous statements can be rephrased as follows: up to a  finite dimensional space composed of smooth functions,  the space
$\{v, \ \gamma_{\alpha \beta }(v)=0 \textrm{ on } \Omega, \ v_n =0 \textrm{ on } \Gamma _0\}$ is isomorphic to the space of traces on $\Gamma _1$:
\begin{equation}
\{\tilde{v} \in C^{\infty }(\Gamma _1)\},
\end{equation}
the isomorphism is obtained by solving (\ref{gamma33bis}).

In the sequel, we shall consider indifferently the functions $v $ (defined up to an additive element of the kernel) or their traces $\tilde{v}$ on
$\Gamma _1$.
\end{remark}

\subsection{The system 
of \textit{membrane tensions}}
\noindent

Consider the \textit{membrane tensions} system $\mathcal{T}$ of three equations with the three unknowns $(T^{11}, T^{22},T^{12})$:
\begin{equation}\label{Tbis}
\left\{\begin{array}{l}
 -T^{11 }_{|1}-T^{21 }_{|2}=f^1\\
-T^{22 }_{|2}-T^{21 }_{|1}=f^2\\
-b _{11}T^{11}-2b _{12} T^{12}-  b _{22}T^{22}=f^3.\end{array}\right.
\end{equation}
It is apparent that the three unknowns play analagous roles. Concerning the equations, it is clear that the first and the second are similar but different from the third. Therefore, we consider  $(1,1,0)$ as indices of equations and  $(0,0,0)$ as  indices of unknowns. The principal system $\mathcal{T}_P$ is obtained by replacing the covaraint differentiation $_{|_\alpha}$ by the usual differentiation $ \partial _\alpha$ (i.e. replacing $\Gamma _{\alpha \beta}^\lambda$ by zero). Proceeding as in the proof of Lemma \ref{DNrigidite}, we obtain the following result.
\begin{lemma}\label{DNmembranetensions}
Under the ellipticity assumption of the surface (\ref{elliptic}), the system $\mathcal{T}$ is elliptic of total order two. 
\end{lemma}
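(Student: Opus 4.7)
The plan is to imitate the proof of Lemma \ref{DNrigidite}: fix the Douglis--Nirenberg indices suggested in the statement, write down the principal symbol obtained by freezing coefficients and Fourier-substituting, and check that its determinant does not vanish on $\R^2 \setminus \{0\}$ by invoking the ellipticity hypothesis \eqref{elliptic}.

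First I would fix the indices $(t_1,t_2,t_3)=(0,0,0)$ for the unknowns $(T^{11},T^{22},T^{12})$ and $(s_1,s_2,s_3)=(1,1,0)$ for the three equations, as announced just before the statement. The principal system $\mathcal{T}_P$ is then obtained by replacing each covariant derivative $_{|\alpha}$ in the first two equations by the ordinary derivative $\partial_\alpha$ (the Christoffel contributions drop out, being of strictly lower order $s_j+t_k-1$) and by keeping the undifferentiated $b_{\alpha\beta}$ terms in the third equation, where $s_3+t_k=0$ makes them principal. Using the symmetry $T^{21}=T^{12}$ to collect the two occurrences of $T^{21}$ into a single column, the Fourier substitution $D_\alpha \to \xi_\alpha$ yields a $3 \times 3$ principal symbol $L'(x,\xi)$ whose first two rows come from the divergence terms and whose third row is $(-b_{11},-b_{22},-2b_{12})$.

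Next I would expand the determinant of $L'(x,\xi)$ along the third row; after collecting the powers of $i$, I expect to recover, up to sign, exactly the quadratic form
\[
2 b_{12} \xi_1 \xi_2 - b_{22} \xi_1^2 - b_{11} \xi_2^2
\]
already encountered in the proof of Lemma \ref{DNrigidite}. The ellipticity hypothesis \eqref{elliptic} gives $b_{11}>0$ and $b_{11}b_{22}-b_{12}^2>0$ uniformly on $\overline{\Omega}$, so this form is (negative) definite and therefore non-zero for every $\xi \in \R^2 \setminus \{0\}$, which is precisely the condition \eqref{conditionelli}. The total order is $\sum_j s_j + \sum_k t_k = 2$, which settles the second claim.

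There is essentially no obstacle here: the proof is an algebraic verification that reduces to the computation already done in Lemma \ref{DNrigidite}. The only bookkeeping point requiring care is the correct assembly of the principal symbol, namely making sure that every Christoffel contribution in the first two equations is recognized as strictly subprincipal, and that the two occurrences of $T^{21}=T^{12}$ in rows one and two are placed in the common third column of the matrix $L'(x,\xi)$.
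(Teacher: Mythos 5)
Your proposal is correct and follows exactly the route the paper indicates, which is simply the phrase ``Proceeding as in the proof of Lemma \ref{DNrigidite}, we obtain the following result'' — i.e.\ freeze coefficients, drop the Christoffel terms as subprincipal, form the $3\times3$ principal symbol with row $(-b_{11},-b_{22},-2b_{12})$ for the undifferentiated equation, and observe that the determinant reduces to the same quadratic form $2b_{12}\xi_1\xi_2 - b_{22}\xi_1^2 - b_{11}\xi_2^2$ that is definite under \eqref{elliptic}. Your write-up merely supplies the elementary determinant expansion the paper leaves implicit; the index choice, total order $2$, and conclusion all agree.
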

\begin{remark}\label{rigide2}
It is worthwhile to study the Cauchy problem for the membrane tension system (\ref{Tbis}). This is done exactly as in Section 4.1.2 for the rigidity system. We eliminate one of the unknowns, $T^{11}$ for instance and (\ref{Tbis}) reduces to an elliptic system of two first order equations in $T^{12}$ and $T^{22}$. The Cauchy conditions are $T^{12}=T^{22}=0$ on a part of the boundary. According to our special frame, this amounts to $T^{\alpha \beta}n_\beta =0$. This Cauchy problem enjoys uniqueness but not existence and stability in usual spaces. 
\end{remark}

\begin{remark}
The system of membrane tensions $\mathcal{T}$ (\ref{Tbis}) and the system of rigidity $\gamma $ (\ref{rigidity1}) are adjoint to each other. This is easily checked by covariant integration by parts on $S$. Indeed, neglecting boundary terms (we are only interested in the equations) and using (\ref{gamma}) together with the symmetry of the $T^{\alpha \beta})$, we have:
\begin{eqnarray*}\label{intpartie1}
\int_{S} T^{\alpha \beta }\gamma _{\alpha \beta } (u) \D s &=&\int_{S} T^{\alpha \beta }\Big(\frac{1}{2}(u_{\alpha | \beta}+u_{\beta | \alpha}) -b _{\alpha \beta } u_3\Big) \D s \\
&=&\int_{S} T^{\alpha \beta }\Big(u_{\alpha | \beta} -b _{\alpha \beta } u_3\Big) \D s \\
&=&-\int_{S} \Big(T^{\alpha \beta }_{| \beta }u_{\alpha }+T^{\alpha \beta } b _{\alpha \beta } u_3\Big) \D s \\
&=&\int_{S} \mathcal{T}(T)u \D s
\end{eqnarray*}
\end{remark}


\subsection{The \textit{membrane} system}\label{membranesystem}
\noindent

We denote by \textit{membrane} system the system of three equations with three unknowns $u=(u_1,u_2,u_3)$ obtained from (\ref{Tbis}) when the tensions are written in terms of $u$, i.e. 
\begin{equation}\label{Tbisbis}
\left\{\begin{array}{l}
 -T^{11 }_{|1}(u)-T^{21 }_{|2}(u)=f^1\\
-T^{22 }_{|2}(u)-T^{21 }_{|1}(u) =f^2\\
-b _{11}T^{11}(u)-2b _{12} T^{12}(u)-  b _{22}T^{22}(u)=f^3,\end{array}\right.
\end{equation}
with 
\begin{equation}\label{defdeAbis}
T^{\alpha \beta }(u)= A^{\alpha \beta \lambda \mu} \gamma _{\lambda \mu } (u),
\end{equation}
and
\begin{equation}\label{derdeTbis}
 T^{\alpha \beta }_{|k}(u)=\partial _kT^{\alpha \beta}(u)+\Gamma _{k n}^ \beta T^{\alpha n}(u)+\Gamma _{k m}^ \alpha T^{\beta m}(u).
\end{equation}
In order to prove the ellipticity of the \textit{membrane} system, we replace it by another, equivalent one. Indeed, we shall take as unknowns $u_1,u_2,u_3$ and the supplementary auxiliary unknowns $T^{11}, T^{22}, T^{12}$. Inverting the matrix $A^{\alpha \beta \lambda \mu }$  in (\ref{defdeAbis}) and recalling the definition of $\gamma$, we obtain the following equivalent system:
\begin{equation}\label{systemeajout1}
\left\{\begin{array}{l}
 -T^{11 }_{|1}-T^{21 }_{|2}=f^1\\
-T^{22 }_{|2}-T^{21 }_{|1}=f^2\\
-b _{11}T^{11}-2b _{12} T^{12}-  b _{22}T^{22}=f^3,\end{array}\right.
\end{equation}
\begin{equation}\label{systemeajout2}
\left\{\begin{array}{l}
 u_{1|1}-b_{11}u_3-C_{11\alpha \beta} T^{\alpha \beta }=0\\
 u_{2|2}-b_{22}u_3-C_{22\alpha \beta} T^{\alpha \beta }=0\\
 \frac{1}{2}(u_{1|2}+u_{2|1})-b_{12}u_3-C_{12\alpha \beta} T^{\alpha \beta }=0,
 \end{array}\right.
\end{equation} 
where $C_{\alpha \beta \lambda \mu} $ are the \textit{compliances} (inverse matrix of $A^{\alpha \beta \lambda \mu} $). The system (\ref{systemeajout1}) and (\ref{systemeajout2}) is a system of six equations with the six unknowns  $(T^{11}, T^{22}, T^{12}, u_1, u_2,u_3)$ (written in this order). We recognize the \textit{membrane tension} system in (\ref{systemeajout1}) and the \textit{rigidity} system in (\ref{systemeajout2}).
Consider $(1,1,0,0,0,0)$ as indices of equations  and  $(0,0,0,1,1,0)$ as indices of unknowns. Then  replacing the  differentiation $ \partial _\alpha$ by $-i \xi _\alpha$ and taking the determinant of the obtained system, we have a determinant of the form 
$$ \left| \begin{array}{ll}
 D_{11}&0\\
 D_{21}& D_{22}
 \end{array}\right|= 0=\left| \begin{array}{l}
 D_{11}
 \end{array}\right| \left| \begin{array}{l}
  D_{22}
 \end{array}\right|,$$
where the $D_{\alpha \beta}$ are $3 \times 3$ matrices. Moreover, $D_{11} $ and $D_{22}$ are precisely those of the \textit{membrane tension} system  and the \textit{rigidity} system respectively and ellipticity follows. The same result is obviously obtained without using the auxiliary unknowns $T^{\alpha \beta}$, in fact, we have,
\begin{lemma}\label{DNmembrane}
Under the ellipticity assumption of the surface (\ref{elliptic}), the \textit{membrane} system with indices (of unknowns and of equations) $(1,1,0)$ $(1,1,0)$ is elliptic of total order four.
\end{lemma}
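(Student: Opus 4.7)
The plan is to follow the reformulation already sketched immediately before the statement: rewrite the $3\times 3$ \emph{membrane} system as an equivalent $6\times 6$ system in the unknowns $(T^{11},T^{22},T^{12},u_1,u_2,u_3)$, by keeping the membrane stress tensors as auxiliary unknowns. This reformulation is legitimate because the coercivity assumption (\ref{coercivity}) makes the tensor $A^{\alpha\beta\lambda\mu}$ invertible, so the relations $T^{\alpha\beta}=A^{\alpha\beta\lambda\mu}\gamma_{\lambda\mu}(u)$ can be inverted and used to eliminate the $T^{\alpha\beta}$, recovering the original system (\ref{Tbisbis})--(\ref{derdeTbis}) exactly.

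Next I would assign the Douglis--Nirenberg indices $(s_1,\dots,s_6)=(1,1,0,0,0,0)$ to the equations listed in the order (\ref{systemeajout1}), (\ref{systemeajout2}), and $(t_1,\dots,t_6)=(0,0,0,1,1,0)$ to the unknowns in the order $(T^{11},T^{22},T^{12},u_1,u_2,u_3)$. A direct check of each entry confirms $\mathrm{ord}(l_{kj})\le s_k+t_j$, with equality on the terms retained in the principal part. Crucially, the first three equations depend only on $T^{\alpha\beta}$ at principal order (the $u_j$ do not appear), so the upper-right $3\times 3$ block of the principal symbol vanishes, and the symbol matrix is block lower triangular
$$
L'(x,\xi)=\begin{pmatrix} D_{11}(x,\xi) & 0 \\ D_{21}(x,\xi) & D_{22}(x,\xi) \end{pmatrix},
$$
with $D_{11}$ equal to the principal symbol of the \emph{membrane tension} system $\mathcal T$ and $D_{22}$ equal to the principal symbol of the \emph{rigidity} system $\gamma$. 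Hence
$$
\det L'(x,\xi)=\det D_{11}(x,\xi)\cdot\det D_{22}(x,\xi),
$$
and by Lemmas \ref{DNrigidite} and \ref{DNmembranetensions} each factor is nonzero for every $\xi\in\R^2\setminus\{0\}$ under the ellipticity assumption (\ref{elliptic}). The $6\times 6$ system is therefore elliptic, of total order $\sum s_k+\sum t_j=2+2=4$.

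To transfer this to the $3\times 3$ \emph{membrane} system with indices $(1,1,0),(1,1,0)$, I would argue by injectivity of the principal symbol: any $u$ with $L'_{\mathrm{memb}}(x,\xi)u=0$ yields, via $T^{\alpha\beta}=A^{\alpha\beta\lambda\mu}\gamma_{\lambda\mu}(u)$ read at the principal level, a pair $(T,u)$ solving the homogeneous principal $6\times 6$ system, hence $u=0$; equivalently, the composition of the two principal maps shows that $\det L'_{\mathrm{memb}}(x,\xi)$ differs from $\det D_{11}\cdot\det D_{22}$ only by the nonzero multiplicative factor coming from $A^{\alpha\beta\lambda\mu}$. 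The total order is again $2+2=4$. The main obstacle I anticipate is bookkeeping: one has to confirm carefully that the block-triangular splitting really captures all principal-order contributions (in particular that no $u$-derivative of principal order hides inside $T^{\alpha\beta}_{|\beta}$ through the Christoffel or curvature coefficients), so that nothing spoils the factorisation of the determinant.
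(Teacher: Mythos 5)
Your proof is correct and follows essentially the same route as the paper: introduce the auxiliary unknowns $T^{\alpha\beta}$ to obtain the equivalent $6\times 6$ system with Douglis--Nirenberg indices $(1,1,0,0,0,0)$ and $(0,0,0,1,1,0)$, observe that the principal symbol is block lower-triangular with blocks equal to the membrane-tension and rigidity symbols, and conclude non-vanishing of the determinant from Lemmas \ref{DNrigidite} and \ref{DNmembranetensions}. The only place where you go slightly further than the paper is the final transfer back to the $3\times 3$ system with indices $(1,1,0),(1,1,0)$, which the paper dismisses as "obviously obtained without using the auxiliary unknowns"; your factorisation argument $\det L'_{\mathrm{memb}} = c\,\det D_{11}\cdot\det\tilde A_1\cdot\det D_{22}$ (with $\tilde A_1$ invertible by coercivity) makes that step explicit, and the bookkeeping worry you flag at the end is harmless because the Christoffel and curvature coefficients are order-zero multipliers and cannot raise the differentiation order.
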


Let us now state boundary value problems which will be considered later on. It is to be noticed that only two boundary conditions are considered on $\Gamma _0$.

\begin{proposition}\label{shapirolopatinskii1}
The boundary value problem  
\begin{equation}\label{pbvar3bispreambule}
\left\{\begin{array}{l}
  -\partial _1 T^{11}(u)-  \partial _2 T^{21}(u)=f^1\\
  -\partial _2 T^{22}(u)-  \partial _1 T^{21}(u) =f^2\\
-b _{11}T^{11}(u)-2b _{12} T^{12}(u)-  b _{22}T^{22}(u)=f^3 
\\
u_1= u_2 =0, \textrm{ on }\Gamma _0\\
T^{\alpha \beta} (u)n_\alpha =0 \textrm{ on } \Gamma _1, \, \beta \in \{1,2\} .
\end{array}\right.
\end{equation} 
with unknown $u$ satisfies the SL condition  on $\Gamma _0 $ but it does not on $\Gamma _1$.
\end{proposition}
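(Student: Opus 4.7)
The plan is to verify the SL condition on $\Gamma_0$ through an energy identity combined with the infinitesimal rigidity result, and to exhibit an explicit boundary-layer mode violating SL on $\Gamma_1$. Throughout, I fix a boundary point $x_0$, freeze coefficients there, and use the special cartesian frame with $x_1$ tangential and $x_2$ inward normal, as in Section \ref{sectionSL3}. By the Proposition of that subsection, it suffices to look at solutions $u$ of the principal homogeneous problem that are periodic in $x_1$ and exponentially decaying as $x_2 \to +\infty$.

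\emph{SL on $\Gamma_0$.} Let such a $u$ additionally satisfy $u_1 = u_2 = 0$ on $\{x_2=0\}$. Multiplying the first two equations of (\ref{pbvar3bispreambule}) by $\bar u_1, \bar u_2$ and the third by $\bar u_3$, summing, and integrating over $\{x_2>0\}$ (one period in $x_1$), a standard integration by parts yields
$$\int T^{\alpha \beta}(u)\, \overline{\gamma_{\alpha \beta}(u)}\, dx_1\, dx_2 \;+\; \int_{x_2=0} T^{\alpha 2}(u)\, \bar u_\alpha\, dx_1 = 0,$$
with no contribution from $x_2 = +\infty$ thanks to exponential decay. The boundary integral vanishes because $u_1 = u_2 = 0$ on $x_2 = 0$. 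Since $T^{\alpha\beta}(u) = A^{\alpha\beta\lambda\mu}\gamma_{\lambda\mu}(u)$ and the Hermitian extension of the real coercive tensor $A$ in (\ref{coercivity}) is still positive definite, this forces $\gamma_{\alpha\beta}(u) \equiv 0$. The identity $\gamma_{11}(u) = \partial_1 u_1 - b_{11} u_3 = 0$, together with $u_1 = 0$ on $\{x_2=0\}$ and $b_{11}>0$, then gives $u_3 = 0$ there too, so $u$ has vanishing Cauchy data on the boundary and solves the rigidity system. At fixed tangential frequency $\xi_1 \neq 0$, the frozen characteristic equation $b_{11}\zeta^2 - 2 b_{12}\xi_1 \zeta + b_{22} \xi_1^2 = 0$ (Lemma \ref{DNrigidite}) has a unique root $\zeta_+$ with $\textrm{Im}\,\zeta_+ > 0$ by the ellipticity (\ref{elliptic}) of the surface; the corresponding one-dimensional amplitude space is annihilated by $u_1 = 0$ on $x_2=0$. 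Hence $u \equiv 0$, and SL holds on $\Gamma_0$.

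\emph{Failure of SL on $\Gamma_1$.} It suffices to produce one nontrivial exponentially decaying $u$ solving the homogeneous principal problem with $T^{\alpha 2}(u) = 0$ at $x_2 = 0$. Seek $u = U e^{i \xi_1 x_1 + i \zeta x_2}$ with $U \in \mathbb{C}^3$, $\xi_1 \in \mathbb{R}\setminus\{0\}$, $\textrm{Im}\,\zeta > 0$, subject to the three scalar relations $\gamma_{\alpha\beta}(u) = 0$. If such $U$ exists, then $T^{\alpha\beta}(u) = A^{\alpha\beta\lambda\mu}\gamma_{\lambda\mu}(u) \equiv 0$, so the three interior equations are trivially fulfilled and the natural boundary conditions $T^{\alpha 2}(u)|_{x_2=0}=0$ are automatic. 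Now $\gamma_{\alpha\beta}(u) = 0$ is a $3 \times 3$ linear system in $U$, whose determinant (again by Lemma \ref{DNrigidite}) is a nonzero multiple of $b_{11}\zeta^2 - 2b_{12}\xi_1 \zeta + b_{22}\xi_1^2$; its discriminant $4\xi_1^2(b_{12}^2 - b_{11}b_{22})$ is strictly negative by (\ref{elliptic}), so one root $\zeta_+$ has $\textrm{Im}\,\zeta_+ > 0$ and produces a nonzero amplitude $U$ in the kernel. This is the desired boundary-layer solution, so SL fails on $\Gamma_1$.

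The principal obstacle is arranging the energy identity on $\Gamma_0$ cleanly: one must check that the algebraic third equation, tested by $\bar u_3$, combines with the two divergence equations, tested by $\bar u_\alpha$, to reconstitute exactly the quadratic form $T^{\alpha\beta}\overline{\gamma_{\alpha\beta}(u)}$; that no residual boundary term survives at $x_2 = +\infty$ (which is provided by the exponential decay); and that the rigidity uniqueness transfers from the bounded-domain setting of Lemma \ref{rigide} to the frozen-coefficient half-strip, which is handled either by the explicit one-dimensional ODE analysis above or by the pseudo-analytic extension argument used in the proof of that lemma. The $\Gamma_1$ side is by contrast essentially algebraic and mirrors Remark \ref{instabilite}: the absence of Dirichlet-type conditions leaves the rigidity kernel free to furnish an exponentially decaying boundary layer.
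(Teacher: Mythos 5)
Your proof is correct and follows essentially the same path as the paper's: integrate the tested principal equations by parts to obtain $\int T^{\alpha\beta}(u)\overline{\gamma_{\alpha\beta}(u)}=0$, use coercivity of $A$ to conclude $\gamma_{\alpha\beta}(u)\equiv 0$, then (for $\Gamma_0$) invoke the rigidity uniqueness to kill the one-dimensional amplitude space, and (for $\Gamma_1$) exhibit the nonzero exponentially decaying rigidity mode, which automatically satisfies both interior equations and natural conditions since $T=A\gamma=0$. The only cosmetic differences are that you make the $x_2=0$ boundary term in the integration by parts explicit, you derive the redundant intermediate fact $u_3|_{x_2=0}=0$ before the ODE analysis (the paper instead cites Lemmas \ref{rigide}/\ref{DNdirichlet} directly), and you skip the energy identity on the $\Gamma_1$ side — where the paper uses it to reduce to $\gamma=0$, you instead construct the counterexample within the rigidity kernel from the outset, which is a slightly leaner route to the same conclusion.
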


\begin{remark}
The partial differential boundary value problem (\ref{pbvar3bispreambule}) is formally associated with the variational problem (\ref{pbvar}) when $\varepsilon =0$.
\end{remark}

\begin{proof}
Let us fix $x_0 \in \Gamma$. According to the definition of the SL
condition, we consider the homogeneous system with constant coefficients in which we only kept the principal terms, i.e. taking $\Gamma ^\lambda _{\alpha \beta}=0$ but $b_{\alpha \beta } \neq 0$ and $f^i=0$. 

After a change of coordinates with local mappings, still denoted
by $(x_1,x_2)$, we only have to consider solutions, which are
exponentially decreasing in the direction inwards the domain ($x_2$), of
the corresponding boundary value problem obtained by formal tangential
Fourier transform. Denoting by $\tilde{u}(\xi _1, x_2)$ such a
solution, by periodicity, we  can restrict the domain to the strip
$B=(0, 2\pi /| \xi _1|) \times (0, +\infty)$  and we can consider
the function 
\begin{equation}\label{defdev}
v( x_1, x_2)=e^{i \xi _1 x_1}  \tilde{u}( \xi _1,x_2),
\end{equation}
 which is periodic in the tangential direction $x_1$, decreasing as $x_2 \to + \infty $ and satisfies the homogeneous boundary condition associated with the principal part of (\ref{pbvar3bispreambule}).  
Recall that $v$ satisfies  the equation
\begin{equation}\label{bande01}
\left\{\begin{array}{l}
  -\partial _1 T^{11}(v)-  \partial _2 T^{21}(v)=0\\
  -\partial _2 T^{22}(v)-  \partial _1 T^{21}(v) =0\\
-b _{11}T^{11}(v)-2b _{12} T^{12}(v)-  b _{22}T^{22}(v)=0.
\end{array}\right.
\end{equation}
We  multiply each line of (\ref{bande01}) by the conjugate $\overline{v}_i$ and we integrate by parts on the periodicity layer  $B$. We see that on the infinite boundary the boundary integral is vanishing thanks to the decreasing condition as $x_2 \to + \infty $. The boundary integral also vanishes on the lateral boundary (which is parallel to $x_2$) of the strip thanks to the periodicity of $v$. Recalling the definition of $T^{ij}$, we obtain  
\begin{equation}\label{4.1}
\int _{B} A^{\alpha \beta \lambda \mu}\gamma _{\lambda \mu}(v) \gamma _{\alpha \beta }(\overline{v}) \D x_1 \D x_2  =0,
\end{equation}
where obviously all the $\Gamma ^\alpha _{\beta \gamma}=0$.
Consequently, recalling the positivity property (\ref{coercivity}) of $A$, this yields that  
\begin{equation}\label{4.1}
 \int _{B} \Sigma _{\alpha \beta}
 |\gamma _{\alpha \beta }(\overline{v}) |^2\D x_1 \D x_2 =0,
\end{equation}
and then 
\begin{equation}\label{4.21}
\gamma _{\alpha \beta}(v) =0 \textrm{ on } B.
\end{equation}
 We have now to distinguish two cases. 
 
If $x_0 \in \Gamma_0$, then reasonning as in Lemma \ref{rigide} (or merely as in Lemma \ref{DNdirichlet}), we deduce that $v_1=v_2=v_3=0$, which means that the SL condition is satisfied on $\Gamma_0$.

Let now $x_0 \in \Gamma_1$ and 
\begin{equation}\label{4.2}
\gamma _{\alpha \beta}(v) =0 \textrm{ on } B .
\end{equation}
Remembering the definition (\ref{defdev}) of $v$, this yields that  $\tilde{u}$ is a solution of the following system of ODE of order 2:
\begin{equation}\label{bande1}
\left\{\begin{array}{l}
  i\xi _1 \tilde{u}_1-  b _{11}\tilde{u}_3 =0\\
 \partial _2 \tilde{u}_2-  b _{22} \tilde{u}_3 =0\\
\frac{1}{2}(\partial  _2 \tilde{u}_1+i\xi _1 \tilde{u}_2)-b_{12}\tilde{u}_3=0.
\end{array}\right.
\end{equation}
Thanks to the fact that $b_{11}\neq 0 $ and $b_{22} \neq 0$ this can be rewritten as:
$$
\left\{\begin{array}{l}
  \tilde{u}_1=-i \frac{ b _{11}}{\xi_1}\tilde{u}_3 \\
    \tilde{u}_3 =\frac{1}{b_{22}}\partial _2 \tilde{u}_2\\
b_{11} \partial ^2_2 \tilde{u}_2 -2ib_{12}\xi_1 \partial _2\tilde{u}_2  -b_{22} \xi _1^2\tilde{u}_2=0. 
\end{array}\right.
$$
Recalling the ellipticity condition (\ref{elliptic}), we obtain after an easy computation that there exists a complex solution $\tilde{u}$, given by $\tilde{u}= we^{ \lambda _-x_2}$, where $w\neq 0$ and $\lambda _-$ is the root with negative real part of 
$$b_{11} \lambda ^2 -2ib_{12} \xi _1 \lambda -b_{22} \xi _1^2=0. $$
This means that there exists non zero $v$ which is
exponentially decreasing in the direction inwards the domain
$$v( \xi_1, x_2)=we^{i \xi _1 x_1}e^{ \lambda _-x_2} ,$$
with Re$(\lambda _-)<0$ such that 
$$\gamma _{\alpha \beta}(v) =0 \textrm{ on } B ,$$
and hence 
$$T^{\alpha \beta} (v)n_\alpha =0 \textrm{ on } \Gamma _1 .$$ 
Therefore, the SL
condition is not satisfied on $\Gamma _1$.
\end{proof}

\subsection{The \textit{Koiter shell} system}
\noindent

The boundary value problem associated with the variational problem (\ref{pbvar}) with $\varepsilon >0$ is classical and well-posed (see for instance \cite{Ciarlet}, \cite{San89}). It is elliptic of total order 8, and the boundary conditions satisfy the SL condition. The system of equations is obtained by integration by parts, which yields:
\begin{equation}\label{pbvar4bis}
\left\{\begin{array}{l}
  - T^{\alpha \gamma }_{|\alpha}(u)+ \varepsilon ^2 b_{\beta }^ \gamma M^{\alpha \beta }_{|\alpha}(u)  +\varepsilon ^2\Big( b_{\alpha }^\gamma M^{\alpha \beta}(u)\Big)_{| \beta} =f^\gamma \\
 -b _{\alpha \beta }T^{\alpha \beta}(u)- \varepsilon ^2 M^{\alpha \beta } (u)_{| \alpha \beta}+ \varepsilon ^2b_{\alpha }^{\gamma } b_{\gamma \beta } M^{\alpha \beta } (u)=f^3,
\end{array}\right.
\end{equation}
where the flection moments $M^{\alpha \beta}$ were defined in (\ref{defdeB1}), (\ref{defdeB})  
The boundary conditions on $\Gamma _0$ (supposed clamped) are:
\begin{equation}\label{conditionlimiteprincipalebis}
u_1=u_2=u_3= \partial _n u_3=0 \textrm{ on }\Gamma _0
\end{equation}
while the  \textit{natural} boundary conditions on $\Gamma _1$ are in number of four, are not relevant (they are boundary terms obtained by integration by parts). We have:
\begin{proposition}\label{shapirolopatinskii2}
The boundary value problem  associated with the variational problem (\ref{pbvar}) when $\varepsilon >0$ considered as a system of three equations  with the unknowns $u$ 
is elliptic of total order $8$ with indices $(1,1,2)$ for the unknowns and the equations.
\end{proposition}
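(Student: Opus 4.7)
The plan is to establish DN-ellipticity of the Koiter shell operator by first verifying the indices and then showing that the principal symbol $L'(x_0,\xi)$ is invertible via an energy argument that exploits the variational origin of (\ref{pbvar4bis}).

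First, I would verify the admissibility of $(t_j) = (s_k) = (1,1,2)$ by inspecting each entry $l_{kj}$ of (\ref{pbvar4bis}): the membrane operator $-T^{\alpha k}_{|\alpha}(u)$ contributes order $2$ in $u_j$ for $j\in\{1,2\}$, while the $\varepsilon^2$ bending terms involving $M^{\alpha\beta}_{|\alpha}$ and $M^{\alpha\beta}_{|\alpha\beta}$---expanded through $\rho_{\lambda\mu}$---reach exactly the orders $3$ (for $k\in\{1,2\}$, $j=3$ and for $k=3$, $j\in\{1,2\}$) and $4$ (for $k=j=3$) required. The total order is $\sum_k s_k + \sum_j t_j = 8$.

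The core of the argument is a variational identity at frozen coefficients. At $x_0 \in \Omega$, the constant-coefficient operator $L(x_0,D)$ is the Euler-Lagrange of the quadratic form $\mathcal{B}_{x_0} = a_{x_0} + \varepsilon^2 b_{x_0}$, which yields the algebraic identity
\[ L(x_0,\xi) U \cdot \overline U = A^{\alpha\beta\lambda\mu}\gamma_{\lambda\mu}(\xi,U)\overline{\gamma_{\alpha\beta}(\xi,U)} + \varepsilon^2 B^{\alpha\beta\lambda\mu}\rho_{\lambda\mu}(\xi,U)\overline{\rho_{\alpha\beta}(\xi,U)} \]
for every $\xi \in \R^2$ and $U \in \C^3$, where $\gamma(\xi,U)$ and $\rho(\xi,U)$ are obtained from (\ref{gamma})--(\ref{gammabis}) by $\partial \mapsto i\xi$ and coefficient freezing at $x_0$. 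Extracting from both sides the DN-principal part (the monomial of degree $s_k+t_j$ in $\xi$ in each block $U_j\overline{U_k}$) produces the reduced identity
\[ L'(x_0,\xi) U \cdot \overline U = A^{\alpha\beta\lambda\mu}\gamma^0_{\lambda\mu}\overline{\gamma^0_{\alpha\beta}} + \varepsilon^2 B^{\alpha\beta\lambda\mu}\rho^0_{\lambda\mu}\overline{\rho^0_{\alpha\beta}}, \]
with $\gamma^0_{\alpha\beta} = \tfrac{i}{2}(\xi_\alpha U_\beta + \xi_\beta U_\alpha)$ (the $-b_{\alpha\beta}u_3$ piece dropping as lower order) and $\rho^0_{\alpha\beta} = -\xi_\alpha\xi_\beta U_3 + i(b^\nu_\beta \xi_\alpha + b^\nu_\alpha \xi_\beta) U_\nu$ (the $b^\nu_{\beta|\alpha}u_\nu$ and $b^\nu_\alpha b_{\nu\beta} u_3$ pieces dropping similarly).

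Assuming now that $L'(x_0,\xi)U = 0$ for some $\xi\neq 0$, the right-hand side vanishes; by the coercivity hypotheses (\ref{coercivity}) and (\ref{coercivitybending}) on $A$ and $B$, this forces $\gamma^0_{\alpha\beta} = 0$ and $\rho^0_{\alpha\beta} = 0$ for every $\alpha,\beta$. The three relations $\gamma^0_{\alpha\beta}=0$ reduce to $\xi_\alpha U_\beta + \xi_\beta U_\alpha = 0$ and, for $\xi \neq 0$, yield $U_1 = U_2 = 0$; substituting into $\rho^0_{\alpha\beta}=0$ leaves $\xi_\alpha\xi_\beta U_3 = 0$, so $U_3 = 0$. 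Hence $U = 0$ and $L'(x_0,\xi)$ is invertible, proving DN-ellipticity of total order $8$. The main obstacle is the bookkeeping of which terms of $\gamma$ and $\rho$ survive the DN-principal extraction: because $t_3 \neq t_\alpha$, only the contributions attaining the maximal $\xi$-degree in each block of the symbol enter the principal identity, and identifying them correctly is what allows the same coercivity-then-rigidity cascade used in the proof of Proposition~\ref{shapirolopatinskii1} to close the argument here.
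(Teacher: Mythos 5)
The paper states this proposition without proof (the authors defer to the references [Ciarlet] and [San89] on the classical well-posedness of the Koiter system), so there is no in-paper argument to compare against; your proof supplies what the paper omitted.

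Your argument is correct. The frozen-coefficient Fourier identity $L(x_0,\xi)U\cdot\overline{U} = A^{\alpha\beta\lambda\mu}\gamma_{\lambda\mu}(\xi,U)\overline{\gamma_{\alpha\beta}(\xi,U)} + \varepsilon^2 B^{\alpha\beta\lambda\mu}\rho_{\lambda\mu}(\xi,U)\overline{\rho_{\alpha\beta}(\xi,U)}$ is exactly the Euler--Lagrange structure read on the symbol side, and the DN-principal reduction you perform is sound: the pieces dropped (namely $-b_{\alpha\beta}U_3$ in $\gamma$, and the zero-order terms $b^\lambda_{\beta|\alpha}U_\lambda$, $-b^\lambda_\alpha b_{\lambda\beta}U_3$ in $\rho$) contribute to every block $U_j\overline{U_k}$ at $\xi$-degree strictly below $s_k+t_j$, while $A\gamma^0\overline{\gamma^0}$ and $\varepsilon^2 B\rho^0\overline{\rho^0}$ are homogeneous of degree exactly $s_k+t_j$ in each block---including the degree-$2$ contribution of $\rho^0$ to the blocks $k,j\in\{1,2\}$ via $i(b^\nu_\beta\xi_\alpha+b^\nu_\alpha\xi_\beta)U_\nu$, which coexists with the membrane contribution there. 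This degree bookkeeping is precisely what makes $(1,1,2)$ admissible and what guarantees $L'U\cdot\overline{U} = A\gamma^0\overline{\gamma^0}+\varepsilon^2 B\rho^0\overline{\rho^0}$. Coercivity of $A$ and $B$ on complex symmetric $2\times 2$ tensors follows from (\ref{coercivity}), (\ref{coercivitybending}) and (\ref{sym}) by separating real and imaginary parts, and then your cascade $\gamma^0=0 \Rightarrow U_1=U_2=0$, $\rho^0=0 \Rightarrow U_3=0$ is right. Your method is the interior-symbol analogue of the energy argument the paper uses in the proof of Proposition~\ref{shapirolopatinskii1}, where the frozen system is tested against $\overline{v}$ and the positivity of $A$ forces $\gamma_{\alpha\beta}(v)=0$; you transpose that tool from the half-space to the full symbol. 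It is also worth noting that, unlike Lemmas \ref{DNrigidite}, \ref{DNmembranetensions} and \ref{DNmembrane}, your argument never uses the surface ellipticity (\ref{elliptic}): the Koiter operator with $\varepsilon>0$ is DN-elliptic for any smooth middle surface, consistent with the proposition carrying no curvature hypothesis.
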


\section{A sensitive singular perturbation problem arising in the Koiter linear shell theory}\label{Sectionmodel}
\noindent

Very few is known concerning elliptic problems with boundary conditions not satisfying the SL condition and there is no general theory concerning them to our knowledge. Linear shell theory is one physical theory where they are naturally involved.

\subsection{Definition of the problem}\label{sectionmodel}
\noindent

Let us first recall the variational problem (\ref{pbvar}) we are interested in:
\begin{equation}\label{convergenceajout1}
\left\{\begin{array}{l}\textrm{Find } u^\varepsilon  \in V
\textrm{ such that, } \forall v \in V\\
a(u^\varepsilon ,v)+\varepsilon ^2 b(u^\varepsilon ,v)=\dualprod{f}{v},
\end{array}\right.
\end{equation}
where $f \in V'$ is given,  the brackets denote the duality between $V'$ and $V$.
More precisely, we consider the limit boundary partial differential system associated with (\ref{convergenceajout1}) when $\varepsilon =0$. This is the membrane system, which according to proposition \ref{shapirolopatinskii1}, is elliptic, satisfies the SL on $\Gamma _0$ but does not on $\Gamma _1$.

\subsection{Sensitive character}\label{sectionsensitive}
\noindent

Let us now recall the definition of sensitive problem. For a more complete description, see \cite{EgorovMeunierSanchez} and \cite{MeunierSanchez2}. Let us comment a little on proposition \ref{shapirolopatinskii1}.

The SL condition is not satisfied on a free boundary when $\varepsilon =0$ for the variational problem (\ref{convergenceajout1}). Specifically, the membrane problem is of total order four for elliptic surfaces. The number of boundary conditions should be two. On a fixed boundary $\Gamma _0$ they are:
\begin{equation}\label{sensitiveajout1}
u_1=u_2=0.
\end{equation}
Note that the trace of $u_3$ does not make sense in the membane framework. The previous boundary conditions satisfy the SL condition. Oppositely, on the free boundary $\Gamma _1$ the conditions are:
\begin{equation}\label{sensitiveajout2}
T^{\alpha \beta } (u)n_\beta =0.
\end{equation}
Let us admit that (\ref{pbvar3bispreambule}) has (in some sense) a solution $u$. Replacing it in the three equations (\ref{pbvar3bispreambule}) and in the boundary conditions on $\Gamma _1$ of (\ref{pbvar3bispreambule}), one obtains that the corresponding $T^{\alpha \beta }(u)$ satisfy the elliptic membrane tensions system with Cauchy conditions on the part of $\Gamma _1$ of the boundary. As this last problem has in general no solution in usual spaces, it follows that the membrane problem (\ref{pbvar3bispreambule}) cannot (in general) have solution in usual spaces. We shall see that existence of the solution (as well as the convergence for $\varepsilon \to 0$) only holds in very abstract spaces (out of the distribution space).

On the other hand, the boundary condition (\ref{sensitiveajout1}) constitutes the Cauchy condition for the rigidity system $\gamma_{\alpha \beta }(u)=0$. According to the uniqueness theorem for elliptic Cauchy problem (
see proof of Lemma \ref{rigide}) an elliptic shell is inhibited (or geometrically rigid) provided that it is fixed (or clamped) on a part (or the whole) of the boundary. When the boundary is everywhere free, the shell is not inhibited. Coming back to the inhibited elliptic shells, we see that when the whole boundary is fixed, the membrane problem is classical (the boundary condition satisfies the SL condition). But, when a part of the boundary $\Gamma _0$  is fixed whereas another one $\Gamma _1$ is not, the boundary conditions satisfy the SL condition on $\Gamma _0$ but not on $\Gamma _1$. This problem is out of the classical theory of elliptic boundary value problems and is called sensitive for reason which will be self evident later.

Let us consider formally the variationnal formulation of the membrane problem (\ref{pbvar3bispreambule}) (i.e. with $\varepsilon =0$):
\begin{equation}\label{sensitiveconvergenceajout1}
\left\{\begin{array}{l}\textrm{Find } u  \in V_a
\textrm{ such that, } \forall v \in V_a\\
a(u ,v)=\dualprod{f}{v},
\end{array}\right.
\end{equation}
where
$V_a$ is the completion of the "Koiter space" $V$  with the norm $ \| v\| _{a}=a(v,v)^{1/2}$.

The fact that $\|v\|_{a}$
is a norm on $V$ follows from lemma \ref{rigide}.

At the present state, it should be noticed that the previous completion process  is somewhat abstract
and the elements of $V_a$ are not necessarly distributions. Indeed, as the SL condition is not satisfied on $\Gamma _1$, we may construct corresponding solutions with $u \neq 0$ and $\gamma _{\alpha \beta}(u)=0$ which are rapidly oscillating along $\Gamma _1$ and exponentially decreasing inwards $\Omega$. This is only concerned with the higher order terms. When taking into account lower order terms  (which are "small" for rapidly oscillating solutions), we see that we may have "large $u$" with "small $\gamma_{\alpha \beta}(u)$" (i.e. small $\Sigma _{\alpha , \beta }\|\gamma_{\alpha \beta }(u) \|_{L^2}$) and then small membrane energy. Accordingly, the dual $V'_a$ where $f$ must be taken for (\ref{sensitiveconvergenceajout1}) to make sense is "very small".

The above property originates the term "sensitive". The problem is unstable. Very small  and smooth variations of $f$ (even in $\mathcal{D}(\Omega)$) induce modifications of the solution which are large and singular (out of the distribution space).

\subsection{Abstract convergence results as $\varepsilon \to 0$}\label{sectionsimple}
\noindent

In this section we recall some abstract convergence results  (in the norm of the specified spaces), see \cite{Caillerie} and \cite{EgorovMeunierSanchez} for more details.


Recalling the problem we are studying, we know that the shell is geometrically rigid:
\begin{equation}\label{rigiditeajout}
v \in V \textrm{ and } \gamma_{\alpha \beta }(v)=0 \Longrightarrow v =0.
\end{equation} 

Let $A$ and $B$ be the continuous operators from $V$ into $V'$ associated with the forms $a$ and $b$ by:
 \begin{equation}\label{2.13}
\dualprod{Au}{v}=a (u,v) \textrm{ and } 
\dualprod{Bu}{v}=b (u,v) \ \forall u,v \in V ,
\end{equation}
so that equation (\ref{convergenceajout1}) becomes:
\begin{equation}\label{convergenceajout2}
Au^\varepsilon +\varepsilon ^2 B u^\varepsilon =f.
\end{equation}

\begin{lemma}\label{lemme2.3}
The operator $A$ is injective and its range, $\mathcal{R}(A)$, is dense in $V'$.
\end{lemma}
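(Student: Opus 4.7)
The plan is to derive both statements from the coercivity of the form $a$ together with the rigidity theorem (Lemma \ref{rigide}); no new analytic ingredient is required beyond those already established.

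For injectivity I would argue as follows. Suppose $u\in V$ satisfies $Au=0$ in $V'$, that is $a(u,v)=0$ for every $v\in V$. Choosing $v=u$ gives
\begin{equation*}
a(u,u)=\int_{S} A^{\alpha\beta\lambda\mu}\gamma_{\lambda\mu}(u)\gamma_{\alpha\beta}(\overline{u})\,\D s=0.
\end{equation*}
The uniform coercivity hypothesis (\ref{coercivity}) forces $\gamma_{\alpha\beta}(u)=0$ a.e.\ on $S$. Since $u\in V$ satisfies $u_{|\Gamma_0}=0$ in the trace sense, in particular $u_1=u_2=0$ on $\Gamma_0$, a one-dimensional manifold of positive measure, Lemma \ref{rigide} applies and yields $u=0$. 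This gives $\ker A=\{0\}$.

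For the density of $\mathcal{R}(A)$ in $V'$ I would invoke the reflexivity of the Hilbert space $V$: $\mathcal{R}(A)$ is dense in $V'$ if and only if every $\phi\in V$ which annihilates $\mathcal{R}(A)$ is zero. Such a $\phi$ satisfies $\langle Au,\phi\rangle=a(u,\phi)=0$ for all $u\in V$. The Hermitian symmetry of $a$, which follows from the symmetries (\ref{sym}) of the coefficients $A^{\alpha\beta\lambda\mu}$, implies that $a(\phi,u)=0$ for all $u\in V$ as well, so $A\phi=0$ and the injectivity just proved gives $\phi=0$. Equivalently, and more directly, specialising the identity $a(u,\phi)=0$ to $u=\phi$ yields $a(\phi,\phi)=0$ and the same coercivity–plus–rigidity chain as above produces $\phi=0$.

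The only delicate ingredient is Lemma \ref{rigide}, whose proof relies on the theory of pseudo-analytic functions; but this has already been established in the preceding section, so the present lemma reduces to the mechanical application of coercivity followed by that rigidity result, and no further obstacle arises.
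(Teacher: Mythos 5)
Your proof is correct and takes what is essentially the only natural route: injectivity via $a(u,u)=0$, the coercivity of $A^{\alpha\beta\lambda\mu}$, and the rigidity Lemma~\ref{rigide}, then density of $\mathcal{R}(A)$ via reflexivity of the Hilbert space $V$ and the same coercivity-plus-rigidity chain applied to an annihilating element. The paper itself does not print a proof (it refers to \cite{EgorovMeunierSanchez}), but your argument uses exactly the ingredients the paper has set up for this purpose and matches the standard proof of such a statement; one small remark is that the Hermitian-symmetry detour is indeed superfluous, as you yourself note, since specialising $a(u,\phi)=0$ to $u=\phi$ already closes the argument.
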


The proof is not difficult, see \cite{EgorovMeunierSanchez} if necessary.

It then appears that the operator $A$ is a one-to-one mapping of $V$ onto $\mathcal{R}(A)$, which is a dense subset of $V'$. Let us define a new norm by
\begin{equation}\label{2.16}
\|v\|_{V_A}=\|Av\|_{V'} .
\end{equation}
Obviously $V$ is not complete for the previous norm. But $A$ defines an isomorphism between $V$ (with the norm $V_A$) and $\mathcal{R}(A)$ (with the norm $V'$). Automatically, $A$ has an extension by continuity which is an isomorphism between the completions of both spaces. Denoting by $\overline{A}$ the extended operator and by $V_A$ the completion of $V$ with the norm (\ref{2.16}), $\overline{A}$ is an isomorphism between $V_A$ and $V'$ (which is the completion of $\mathcal{R}(A)$ with the norm of $V'$). Equation (\ref{convergenceajout2}) may be written as well:
\begin{equation}\label{convergenceajout3}
\overline{A}u^\varepsilon +\varepsilon ^2 B u^\varepsilon =f.
\end{equation}

\begin{remark}
In order to pass to the limit as $\varepsilon \to 0$, the classical way consists in obtaining an a priori energy estimate of $u^{\varepsilon}$ by taking the duality product of  (\ref{convergenceajout3}) with $u^{\varepsilon}$. But such a way needs a hypothesis of boundedness of the functional $f$ with respect to the limit form $a$ and this does not work for any $f  \in V'$. In the general case, following an idea of Caillerie \cite{Caillerie}, see also \cite{EgorovMeunierSanchez}, which consists in proving that the term $\varepsilon ^2 B u^\varepsilon $ tends to zero in $V'$, one can pass this latter term  to the right-hand side, and show that it tends to $f$ in $V'$. Then using the fact that $\overline{A}$ is an isomorphism, it is possible to prove the existence of a limit of $u^\varepsilon $ in $V_A$. Specifically we have the following result. 
\end{remark}

\begin{theorem} \label{theorem2.6}
There exists a unique element $u^0$ in $V_A$  such that
\begin{equation}\label{A1}
\overline{A}u^0 =f.
\end{equation}
Moreover the following strong convergence holds in $V_A$:
\begin{equation}\label{convergenceajout4}
u^\varepsilon \to u^0 \textrm{ as } \varepsilon \to 0,
\end{equation}
where $u^\varepsilon \in V$ is the solution of  (\ref{convergenceajout3}).
\end{theorem}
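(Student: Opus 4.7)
The plan is to extract the existence and uniqueness of $u^0$ directly from the extension construction that precedes the statement, and then to devote the bulk of the work to the convergence. By Lemma~\ref{lemme2.3}, $\mathcal{R}(A)$ is dense in $V'$, so $\overline{A}:V_A\to V'$ is an (isometric) isomorphism; this gives at once a unique $u^0=\overline{A}^{-1}f\in V_A$ satisfying (\ref{A1}). Since $u^\varepsilon\in V\subset V_A$ and $\overline{A}u^\varepsilon=Au^\varepsilon=f-\varepsilon^2 Bu^\varepsilon$, the same isometry collapses the convergence claim to
\[
\|u^\varepsilon-u^0\|_{V_A}=\|\overline{A}(u^\varepsilon-u^0)\|_{V'}=\|\varepsilon^2 B u^\varepsilon\|_{V'},
\]
so the entire theorem reduces to showing that $\varepsilon^2 B u^\varepsilon\to 0$ in $V'$.

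I would first establish this smallness for $f$ in the dense subspace $\mathcal{R}(A)$, which is the heart of the argument. Writing $f=Aw$ with $w\in V$, one has $A(u^\varepsilon-w)=-\varepsilon^2 Bu^\varepsilon$; pairing with $u^\varepsilon-w\in V$ gives
\[
a(u^\varepsilon-w,u^\varepsilon-w)+\varepsilon^2 b(u^\varepsilon,u^\varepsilon)=\varepsilon^2 b(u^\varepsilon,w),
\]
and Cauchy--Schwarz followed by Young's inequality on the right-hand side absorbs the cross term, producing
\[
a(u^\varepsilon-w,u^\varepsilon-w)+\tfrac12\varepsilon^2 b(u^\varepsilon,u^\varepsilon)\le\tfrac12\varepsilon^2 b(w,w).
\]
Continuity of $b$ on $V$ then yields $\|\varepsilon^2 Bu^\varepsilon\|_{V'}\le C\varepsilon^2 b(u^\varepsilon,u^\varepsilon)^{1/2}\le C\varepsilon^2 b(w,w)^{1/2}\to 0$, which is exactly the required smallness.

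For arbitrary $f\in V'$ the plan is to extend by density. Pick $f_n=Aw_n\in\mathcal{R}(A)$ with $f_n\to f$ in $V'$, denote by $u^{\varepsilon,n}\in V$ the corresponding Lax--Milgram solution (Proposition~\ref{coerciviteCB}), and set $z=u^\varepsilon-u^{\varepsilon,n}$, $g=f-f_n$, so that $Az+\varepsilon^2 Bz=g$. Coerciveness with constant $c\varepsilon^2$ gives $\|z\|_V\le c^{-1}\varepsilon^{-2}\|g\|_{V'}$; pairing with $z$ and using continuity of $b$ then produces the crucial $\varepsilon$-uniform bound
\[
\|\varepsilon^2 Bz\|_{V'}\le C\varepsilon^2 b(z,z)^{1/2}\le C'\|f-f_n\|_{V'}.
\]
Combined with $\|\varepsilon^2 Bu^\varepsilon\|_{V'}\le\|\varepsilon^2 Bu^{\varepsilon,n}\|_{V'}+\|\varepsilon^2 Bz\|_{V'}$ and the previous paragraph applied to each $f_n$, a routine $(n,\varepsilon)$-diagonal argument closes the proof.

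The main obstacle is precisely this last uniform estimate. The Lax--Milgram stability constant for the perturbed problem degenerates like $\varepsilon^{-2}$, which at first sight destroys any hope of a density argument; what saves the day is the prefactor $\varepsilon^2$ in front of $B$, which exactly compensates the degeneracy. There is no direct a priori estimate of $u^\varepsilon$ in $V_A$ and no compactness argument is available in this abstract completion, so this cancellation mechanism, together with the density of $\mathcal{R}(A)$ in $V'$ from Lemma~\ref{lemme2.3}, is really the engine driving the strong convergence.
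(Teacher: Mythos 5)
Your proof is correct and follows the same strategy the paper outlines in the remark preceding the theorem (attributed to Caillerie and to \cite{EgorovMeunierSanchez}): reduce everything to showing $\varepsilon^2 B u^\varepsilon \to 0$ in $V'$, then invoke the isometric isomorphism $\overline{A}:V_A\to V'$. The paper delegates the details to \cite{EgorovMeunierSanchez}; you have reconstructed them correctly, including the energy cancellation on the dense subspace $\mathcal{R}(A)$ and the $\varepsilon$-uniform stability estimate needed to close the density argument.
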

The proof, which follows the trends outlined above, may be seen in \cite{EgorovMeunierSanchez}.

\begin{remark}
It should be emphasised that theorem \ref{theorem2.6} holds true without special hypothesis on $f$ (besides the obvious one $f \in V'$). The limit $u^0 \in V_A$ is the solution of the abstract problem (\ref{sensitiveconvergenceajout1}), which is not a variational one. The classical variational theory of the limit needs a supplementary hypothesis on $f$ : there exists $C>0$ such that 
\begin{equation}\label{classique}
\| \dualprod{f}{v} \| \le C a\scalprod{v}{v} ^{1/2}, \quad \forall v \in V,
\end{equation}
which is very restrictive in shell theory.
\end{remark}

For the sake of completness, let us give the elements of the classical limit theory under the assumption (\ref{classique}).

We first note that in such a case, $a\scalprod{v}{v} ^{1/2}$ defines a norm on $V$. Let $V_a$ be the completion of $V$ with respect to that norm (which should not be confused with $V_A$). We then note that (\ref{classique}) shows that $f$ may be extended by continuity to an element of $V_a'$. We shall denote this extension by $f$ again. Obviously, the variational problem 
\begin{equation}\label{variationalajout1}
\left\{\begin{array}{l}\textrm{Find } u^0  \in V_a
\textrm{ such that, } \forall v \in V_a\\
a(u^0 ,v)=\dualprod{f}{v},
\end{array}\right.
\end{equation}
is well posed and has a unique solution. We then have the classical convergence result (see \cite{Huet} e.g. or even \cite{San1})
\begin{theorem} \label{theorem2.7}
Under the assumption (\ref{classique}), we have 
\begin{equation}\label{convergenceajout6}
u^\varepsilon \to u^0 \textrm{ strongly in  } V_a \textrm{ as } \varepsilon \to 0,
\end{equation}
where $u^\varepsilon $ and $u^0$ are the solutions  of  (\ref{convergenceajout1}) and (\ref{A1}) respectively.
\end{theorem}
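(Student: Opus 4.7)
The plan is to apply the standard energy method for penalty-type variational problems. Under the hypothesis (\ref{classique}), the linear functional $v\mapsto \dualprod{f}{v}$ extends by continuity from $V$ to $V_a$, so (\ref{variationalajout1}) is of Lax--Milgram type in the Hilbert space $(V_a,\,a(\cdot,\cdot))$ and has a unique solution $u^0$.

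The first step is to derive uniform a priori bounds. Testing (\ref{convergenceajout1}) against $v=u^\varepsilon$ and invoking (\ref{classique}) gives
\begin{equation*}
a(u^\varepsilon,u^\varepsilon)+\varepsilon^2 b(u^\varepsilon,u^\varepsilon)\;=\;\dualprod{f}{u^\varepsilon}\;\le\;C\,a(u^\varepsilon,u^\varepsilon)^{1/2}.
\end{equation*}
Since both terms on the left are nonnegative, this yields $\|u^\varepsilon\|_a\le C$ and $\varepsilon\,b(u^\varepsilon,u^\varepsilon)^{1/2}\le C$, uniformly in $\varepsilon$.

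The second step is to pass to the limit. Boundedness in the Hilbert space $V_a$ gives, up to extraction, a weak limit $u^\varepsilon\weakto u^*$ in $V_a$. For every $v\in V$ I would rewrite (\ref{convergenceajout1}) as $a(u^\varepsilon,v)+\varepsilon^2 b(u^\varepsilon,v)=\dualprod{f}{v}$. The Cauchy--Schwarz inequality for the positive semidefinite form $b$ bounds the bending term by
\begin{equation*}
|\varepsilon^2 b(u^\varepsilon,v)|\;\le\;\varepsilon\,\bigl(\varepsilon\,b(u^\varepsilon,u^\varepsilon)^{1/2}\bigr)\,b(v,v)^{1/2}\;=\;O(\varepsilon),
\end{equation*}
so it tends to zero. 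In the limit, $a(u^*,v)=\dualprod{f}{v}$ for every $v\in V$, and by density of $V$ in $V_a$ for every $v\in V_a$. Uniqueness for (\ref{variationalajout1}) forces $u^*=u^0$, and a standard subsequence argument shows that the whole family converges weakly to $u^0$.

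The last step upgrades weak to strong convergence using the Hilbert structure of $V_a$. Dropping the nonnegative bending contribution in the energy identity yields $\|u^\varepsilon\|_a^2\le \dualprod{f}{u^\varepsilon}$, whose right-hand side converges to $\dualprod{f}{u^0}=a(u^0,u^0)=\|u^0\|_a^2$. Combined with the weak lower semicontinuity $\|u^0\|_a\le \liminf\|u^\varepsilon\|_a$, this forces $\|u^\varepsilon\|_a\to \|u^0\|_a$, and weak convergence plus convergence of norms in a Hilbert space is strong convergence. I expect no serious obstacle; the only subtle point is that the $u^\varepsilon$ live a priori only in $V$, while the limit must be interpreted in the larger completion $V_a$---which is exactly what the continuous extension of $f$ granted by (\ref{classique}) makes possible.
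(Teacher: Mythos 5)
Your argument is correct. The paper supplies no proof of this theorem---it simply cites \cite{Huet} and \cite{San1} as classical sources---so there is no in-text argument to compare against, but the energy method you use (uniform $a$-bound together with $\varepsilon\,b(u^\varepsilon,u^\varepsilon)^{1/2}=O(1)$ obtained by testing with $u^\varepsilon$, weak compactness in the Hilbert space $V_a$, passage to the limit in the weak formulation using Cauchy--Schwarz on the $b$-term, identification $u^*=u^0$ by uniqueness in (\ref{variationalajout1}), and finally upgrading weak convergence to strong convergence via $\|u^\varepsilon\|_a\to\|u^0\|_a$) is exactly the standard route for such penalty perturbations, and each step holds. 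A small remark only: the theorem as stated says $u^0$ solves (\ref{A1}), but the convergence takes place in $V_a$, so the relevant characterization of $u^0$ is indeed the variational one (\ref{variationalajout1}), which is what you use.

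For completeness, one can also reach the conclusion without invoking weak compactness. The identity $\|u^\varepsilon-u^0\|_a^2 = \|u^\varepsilon\|_a^2 - 2a(u^\varepsilon,u^0) + \|u^0\|_a^2$, together with $a(u^0,u^\varepsilon)=\dualprod{f}{u^\varepsilon}$ (test (\ref{variationalajout1}) with $v=u^\varepsilon\in V\subset V_a$) and $\|u^\varepsilon\|_a^2\le\dualprod{f}{u^\varepsilon}$, gives $\|u^\varepsilon-u^0\|_a^2\le \|u^0\|_a^2-\dualprod{f}{u^\varepsilon}$. Then the minimality of the Koiter energy at $u^\varepsilon$ compared with an arbitrary $w\in V$, namely $-\dualprod{f}{u^\varepsilon}\le a(w,w)+\varepsilon^2 b(w,w)-2\dualprod{f}{w}$, yields, after sending $\varepsilon\to0$ and then $w\to u^0$ in $V_a$, that $\liminf_{\varepsilon\to0}\dualprod{f}{u^\varepsilon}\ge\|u^0\|_a^2$, so the right-hand side tends to zero. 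This avoids extraction of subsequences, but your proof is complete as it stands.
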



Let us now briefly recall  the non-inhibited case when (\ref{rigiditeajout}) does not hold.  In such a situation, there is a convergence result towards a limit with vanishing membrane energy. More precisely, we define the kernel $G$ of $a$:
\begin{equation}\label{defdeGajout}
G=\{ v \in V ; \ \gamma_{\alpha \beta }(v)=0\}=\{v \in V ; \ a(v,v)=0\}.
\end{equation} 
It is to be noticed that $G$ is a Hilbert space with the norm of $V$. But as $a(v,v)=0$ in $G$, we see that the norm of $V$ in $G$ is equivalent to $b(v,v)^{1/2}$. As a consequence,  the problem 
\begin{equation}\label{variationalajout10}
\left\{\begin{array}{l}\textrm{Find } v^0  \in G
\textrm{ such that, } \forall w \in G\\
b(v^0 ,w)=\dualprod{f}{w},
\end{array}\right.
\end{equation}
is well posed and has a unique solution. Moreover, since the "limit form" $a$ in (\ref{convergenceajout1}) vanishes on $G$, it implies some kind of weakness in $G$. The solution will be very large and we should define a new scaling in order to have a finite limit, $v^ \varepsilon = \varepsilon ^2 u^\varepsilon$ ,
 (\ref{convergenceajout1})  becomes 
\begin{equation}\label{variationalajout11}
\left\{\begin{array}{l}\textrm{Find } v^\varepsilon  \in V
\textrm{ such that, } \forall w \in V\\
\varepsilon ^{-2} a(v^\varepsilon ,w)+b(v^\varepsilon,w)=\dualprod{f}{w},
\end{array}\right.
\end{equation} 
we then have, see \cite{Nantes} e.g. for the proof \begin{theorem}
Under the assumption $G  \neq \emptyset$, 
\begin{equation}
v^\varepsilon \to v^0 \textrm{ strongly in } V,
\end{equation}
where $v^\varepsilon $ and $v^0$ are the solutions of (\ref{variationalajout11}) and (\ref{variationalajout10}) respectively.
\end{theorem}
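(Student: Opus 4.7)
The plan is to follow a classical Lions--Tartar style energy argument in three steps. First I would obtain a priori bounds by testing (\ref{variationalajout11}) against $w = v^\varepsilon$, which gives $\varepsilon^{-2} a(v^\varepsilon,v^\varepsilon) + b(v^\varepsilon, v^\varepsilon) = \dualprod{f}{v^\varepsilon}$. Since both terms on the left are non-negative, one gets separately $b(v^\varepsilon, v^\varepsilon) \leq \dualprod{f}{v^\varepsilon}$ and $a(v^\varepsilon, v^\varepsilon) \leq \varepsilon^2 \dualprod{f}{v^\varepsilon}$, so the coercivity of $a+b$ (Lemma \ref{CB}) yields $\|v^\varepsilon\|_V \leq C\|f\|_{V'}$ and $a(v^\varepsilon, v^\varepsilon) = O(\varepsilon^2) \to 0$.

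Next I would identify the weak limit. Extract a subsequence $v^{\varepsilon_k} \weakto v^*$ in $V$; weak lower semi-continuity of the continuous positive semi-definite form $a$ forces $a(v^*,v^*) = 0$, so $v^* \in G$. The key observation is that Cauchy--Schwarz applied to $a$ gives $|a(w,v)|^2 \leq a(w,w)\,a(v,v) = 0$ for every $w \in G$ and every $v \in V$. Testing (\ref{variationalajout11}) with $w \in G$ therefore cancels the singular $\varepsilon^{-2}$-term and leaves $b(v^\varepsilon, w) = \dualprod{f}{w}$; passing to the weak limit gives $b(v^*, w) = \dualprod{f}{w}$ for every $w \in G$. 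Uniqueness of (\ref{variationalajout10}) then forces $v^* = v^0$, and since the limit does not depend on the subsequence the whole family $v^\varepsilon$ converges weakly to $v^0$ in $V$.

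For strong convergence, I would expand
\[
(a+b)(v^\varepsilon - v^0, v^\varepsilon - v^0) = a(v^\varepsilon,v^\varepsilon) + b(v^\varepsilon,v^\varepsilon) - 2 b(v^\varepsilon, v^0) + b(v^0, v^0),
\]
where the cross terms $a(v^\varepsilon, v^0)$ and $a(v^0, v^0)$ vanish because $v^0 \in G$. The first term on the right tends to $0$ by the first step, and $b(v^\varepsilon, v^0) \to b(v^0, v^0)$ by weak convergence. For $b(v^\varepsilon, v^\varepsilon)$, testing (\ref{variationalajout11}) with $v^\varepsilon$ and dropping the non-negative $\varepsilon^{-2}$-term gives $\limsup b(v^\varepsilon, v^\varepsilon) \leq \lim \dualprod{f}{v^\varepsilon} = \dualprod{f}{v^0} = b(v^0, v^0)$, which together with weak lower semi-continuity of $b$ yields convergence. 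Hence $(a+b)(v^\varepsilon - v^0, v^\varepsilon - v^0) \to 0$, and the coercivity of $a+b$ on $V$ closes the argument. The only genuinely delicate point is the elimination of the $\varepsilon^{-2}a$-term in the second step: it crucially exploits that $a$ vanishes on the larger set $G \times V$ rather than just on $G \times G$; the remaining ingredients are routine.
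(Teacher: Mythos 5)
Your argument is correct, and it is the standard Lions--Tartar singular-perturbation proof that the paper delegates to the reference \cite{Nantes} without reproducing it. The one point you flag as delicate --- that $a(w,\cdot)\equiv 0$ on all of $V$ for $w\in G$, which kills the $\varepsilon^{-2}$-term when testing against $w\in G$ --- is indeed the crux, and your use of the Cauchy--Schwarz inequality for the positive semi-definite form $a$ (valid here since $A^{\alpha\beta\lambda\mu}$ is positive by (\ref{coercivity})) is the right way to obtain it; the rest (a priori bounds from Lemma \ref{CB}, weak lower semi-continuity to force $v^*\in G$ and to pin down $\lim b(v^\varepsilon,v^\varepsilon)=b(v^0,v^0)$, and coercivity of $a+b$ to upgrade to strong convergence) is exactly as it should be. Note in passing that the paper's hypothesis ``$G\neq\emptyset$'' is a typo for $G\neq\{0\}$ (the case of genuine non-inhibition), but this does not affect your reasoning.
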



\section{Heuristic asymptotics in the previous problem}\label{heuristics}
\noindent

The aim of this section is the construction, in a heuristic way,
of an approximate description of the solutions $u^{\varepsilon}$
of the linear Koiter model for small values of
$\varepsilon$. Indeed, coming back to the Koiter problem for $\varepsilon >0$, in the sensitive case, the problem is not really to describe the limit problem (which in general has no solution in the distribution space; in particular the space $V_A$ (see (\ref{2.16})) where there is always a limit, is not a distribution space), but rather to give a good description of the solution $u^\varepsilon $ for very small values of $\varepsilon$. This we shall try to do. We shall see that heuristic considerations allow to construct a simplified model accounting for the main features of the problem. 

To do so  we shall use the heuristic procedure of \cite{EgorovMeunierSanchez}. In this latter article,  we addressed a model problem including
a variational structure, somewhat analogous to the problem studied here,
but simpler, as concerning an equation instead of a system. It is
shown that the limit problem contains in particular an elliptic
Cauchy problem. This problem was handled in both a rigorous (very
abstract) framework and using a heuristic procedure for exhibiting
the structure of the solutions with very small $\varepsilon$. The main difference is that in the present work, we deal with systems instead of single equations.

We shall see that heuristic considerations involving minimization of energy allow us to reduce the problem to another on the boundary $\Gamma _1$. In that context, it is seen that the "pathological" operator $A$ is represented by a smoothing operator $S$ (i.e. sending any distribution to a $\mathcal{C}^\infty $ function), whereas the "classical" operator $B$ is represented by a "classical" elliptic operator $Q$. Denoting by $s(x,\xi)$ and $q(x,\xi)$ the corresponding symbols (here $x$ is the arc on $\Gamma _1$), $s$ is likely exponentially decreasing for $\xi \to  \infty$, whereas $q$ is algebraically growing. 
The action of $S+\varepsilon^{2}Q$ on test functions
is given by:
\begin{equation}\label{S.1}
(S+\varepsilon^{2}Q)\theta(x)=(2\pi)^{-1}\int_{-\infty}^{+\infty}e^{i
\xi x}[s(x,\xi)+\varepsilon^{2}q(x,\xi)]\tilde{\theta}(\xi)\D \xi .
\end{equation}

It is then apparent that, when $\varepsilon$ is small, operator
$S$ is significant only for bounded values of $\xi$, whereas
$\varepsilon^{2}Q $ describes the behavior for
$\xi\rightarrow\infty$. If $|\xi|<< log (1/\varepsilon) $, then the symbol of the operator $S+\varepsilon^{2}Q$ is equal to $(1+o(1))s(x,\xi)$  and for $|\xi|>>log (1/\varepsilon) $, it is $(1+o(1))\varepsilon ^2q(x,\xi)$. The balance of $S$ and $\varepsilon^{2}Q $
is obtained for values of $\xi$ such that:
\begin{equation}\label{S.2}
|\xi|\sim log (1/\varepsilon).
\end{equation}

This is the window of frequencies allowing a good description of
the simultaneous influence of $S$ and $\varepsilon^{2}Q $, which
is precisely our aim.  Moreover, it is easily seen that the range of frequencies (\ref{S.2}) is responsible for most of the contribution to the integral (\ref{S.1}). This property is of
great interest for the construction of the heuristic
approximation. More precisely, the heuristics incorporate
approximations for large $|\xi|$. This amounts to saying
that only the most singular parts of the solutions are retained,
or equivalently, that the approximate solutions are defined up to
more regular terms. This is for instance the kind of approximation
which is used in the construction of a parametrix. We also note
that, as (\ref{S.2}) involves "moderately large" values of
$|\xi|$, the "general quality" of the approximation is not
very good, as it is only accurate for very very small values of
$\varepsilon$.

 It should be noticed that numerical computations \cite{Bechet} carried out with very reliable software (including an adapted mesh procedure) for the Koiter problem with very small values of $\varepsilon $ agree with the overall trends of our heuristic procedure. It appears that most of the deformation consists in very large deformations along $\Gamma _1$ exponentially decreasing inwards $\Omega$ (then in good agreement with the "local lack of uniqueness" implied by the non-satisfied SL condition). As $\varepsilon $ decreases, the amplitude increases, whereas the wave length decreases very slowly, verifying fairly well (\ref{S.2}). The paper \cite{Bechet} also contains numerical comparisons with the case when the shell is fixed all along its boundary, which is classical (as the SL condition is satisfied all along the boundary). The differences are drastic for small values of $\varepsilon$.

\subsection{Introduction to the heuristic asymptotic}
\noindent

A first remark in the context described above is that sensitive problems may be considered as "intermediate" between "inhibited"  and "non-inhibited". Indeed, "inhibited" means that $v \in V$ and $\gamma _{\alpha \beta}(v)=0$ implies $v=0$, whereas "non-inhibited" means that there are non vanishing elements $v$ of $V$ such that $\gamma _{\alpha \beta }(v)=0$. Strictly speaking, sensitive problems enter in the class "inhibited", but there are non vanishing elements $v$ of $V$ with "very small" $\gamma_{\alpha \beta}(v)$.

In order to minimize the energy
\begin{equation}\label{energiesensitive}
a(v,v)+ \varepsilon ^2 b(v,v) -2 \dualprod{f}{v},
\end{equation}   
it is clear that we may proceed as in non-inhibited problems. The solution with small $\varepsilon $ "avoids" the (larger) membrane energy $a$, so that roughly speaking, solutions for small $\varepsilon$ should have
$\gamma _{\alpha \beta }(v)$ vanishing or at least very small with respect to $v$. 

Obviously, it is impossible to impose the four boundary conditions (\ref{conditionlimiteprincipalebis}) on $\Gamma _0$ with the "exact" system $\gamma _{\alpha \beta }(v)=0$ as they imply $v=0$.

Nevertheless, we shall see in Section \ref{6.2.1} that it is possible to construct functions satisfying  the two boundary  conditions $u_n=u_t=0$ on $\Gamma _0$ with the "non exact" system $\gamma _{\alpha \beta }(v)=0$ in the sense that $\gamma _{\alpha \beta }(v)$  will be "very small" (i.e.  $\Sigma _{\alpha , \beta }\|\gamma_{\alpha \beta }(v) \|_{L^2}$ will be very small). This will imply a "membrane boundary layer" in the vicinity of $\Gamma _0$ involving the bilinear form $a$. To this end, we shall first construct a set of functions $v$ with only one vanishing component on $\Gamma _0$. Choosing (for instance) the normal component, we define:

\begin{equation}\label{definitiondeg0}
G^0=\{v, \ \gamma _{\alpha \beta }(v)=0 \textrm{ on } \Omega , \ v_2=0 \textrm{ on } \Gamma _0\},
\end{equation}
the regularity is not precised as we shall later take the completion, we may consider $C^{\infty }$ functions for instance. It is to be noticed that $v$ is a triplet of functions.

Recalling Remark \ref{resultatutile}, we know that 
up to a  finite dimensional space composed of smooth functions,  the space
$G^0$ is isomorphic to the space of traces on $\Gamma _1$:
\begin{equation}
\{w \in C^{\infty }(\Gamma _1)\}
\end{equation}
the isomorphism is obtained by solving the problem:
\begin{equation}\label{pbdirichletajout12}
\left\{\begin{array}{l}\gamma_{\alpha \beta } (\tilde{w}) =0\textrm{ on }  \Omega,\\
\tilde{w}_2 =0\textrm{ on }  \Gamma _0,\\
\tilde{w}_3 =w\textrm{ on }  \Gamma _1.
\end{array}\right.
\end{equation}

In the sequel, when we will consider a function $\tilde{w} \in G^0$, we will consider a function of the equivalence class for the quotient operation described in Remark \ref{resultatutile}. Moreover, we shall consider indifferently the functions $\tilde{w} $ obtained after a quotient operation on $\overline{\Omega }$ (for the finite dimensional space) or their traces $w$ on
$\Gamma _1$.

Moreover, the conditions $u_3=\partial _n u_3=0$ on $\Gamma _0$ of 
(\ref{conditionlimiteprincipalebis}) will be satisfied with the help of a "flection sublayer" involving the bilinear form $b$; its effect is not relevant (see Section \ref{6.2.2}).

According to the previous considerations, we shall consider the minimization problem on $G^0$ instead of on $V$. This modified problem obviously involves the $a$-energy and the
$\varepsilon^{2}b$-energy. A natural space for handling it should
be the completion $G$ of  $G^0$ with the corresponding norm.

The fact that we may "neglect" the functions in the finite dimension space of smooth functions follows from the fact that we are interested in the singular part.

\subsection{The boundary layer on $\Gamma _0$}
\noindent

Let $\tilde{w}$ be in $G^0$ (see (\ref{definitiondeg0})) and let $\varepsilon >0$ be fixed. The aim of this section is to build a modified 
function $\tilde{w}^a$ of $\tilde{w}$ in  a narrow boundary layer of $\Gamma _0$ in order to satisfy the
 supplementary boundary conditions $ \tilde{w}_t=\tilde{w}_3=\partial _n \tilde{w}_3=0$ on $\Gamma _0$.

The present problem is analogous to the "model problem" of  \cite{EgorovMeunierSanchez} in the case of a singular perturbation, i.e. \cite{EgorovMeunierSanchez} Section 7.1.2. Indeed, the membrane problem is of total order 4 allowing 2 boundary conditions ($\tilde{w}_t=\tilde{w}_n=0$) on $\Gamma _0$, whereas the complete Koiter shell problem is of order 8, allowing 4 boundary conditions (we shall add $\tilde{w}_3=\partial _n \tilde{w}_3=0$) on $\Gamma _0$. It appears that the two first conditions ($\tilde{w}_t= \tilde{w}_n=0$) may be obtained from elements of $G^0$ by modifying them on account of a "membrane layer" which relies on the membrane system, of thickness of order $\frac{1}{\log(1/\varepsilon)}$ on $\Gamma _0$, whereas an irrelevant boundary layer will be considered in Section \ref{6.2.2}.

\subsubsection{The membrane boundary layer on $\Gamma_0$}\label{6.2.1}
\noindent

In this subsection, we proceed to modify the element $\tilde{w}$ of $G^0$ in order to satisfy both conditions $u_1=u_2=0$ on $\Gamma _0$.

Let $\tilde{\Gamma }_0$ be a neighborhood of $\Gamma _0$ in
$\R^{2}$ disjoint with $\Gamma _1$ and sufficiently narrow to be
described by the curvilinear coordinates $y_{1}=$ arc of
$\Gamma_{0}$ and $y_{2}=$ distance along the normal to
$\Gamma_{0}$. Let  $(\psi _j (y_1))_{j \in J}$ be a partition of
the unity associated with $\Gamma _0$ and let $\eta \in C^{\infty
}(\R_{+} ;\R_{+})$ be a cut-off function equal to $1$ for small values
of $y_{2}$.

 The mappings
$\theta _j$  defined by $\theta _j(y_1,y_2)=\psi _j (y_1)\eta (y_2)$, where  $y_2$ is the (inwards) normal
coordinate along $\Gamma _0$, define a partition of unity in  $\tilde{\Gamma }_0 $; in particular, for a given $\tilde{w} \in G^0$, we have:
\begin{equation}\label{partition1}
\forall (y_1, y_2) \in \tilde{\Gamma }_0, \ \tilde{w}(y_1,y_2)=\Sigma _{j \in J} \theta _j(y_1, y_2) \tilde{w}(y_1,y_2).
\end{equation}

Let us now fix $j $ in $J$ and $y_2$  such that $(y_1,y_2) \in \tilde{\Gamma }_0$, the
function $\theta _j(\cdot, y_2) \tilde{w}(\cdot,y_2)$ has a compact support, we denote
by $\tilde{w}^j(\cdot , y_2)$ its extension by zero to $\R$ and by $\mathcal{F} (\tilde{w}^j)$ the tangential
  Fourier transform, $y_1 \to \xi _1$, of $\tilde{w}^j$.

Let us first exhibit the local structure of the Fourier transform
of $\tilde{w}^{j}$ close to $\Gamma_{0}$. Denoting by $\theta
_j$ the multiplication operator by $\theta _j$, recalling that the
commutator of the operator $\gamma $ associated with $\gamma_{\alpha \beta} $  and $\theta _j$, denoted by
$[\gamma,\theta _j] $, is a differential operator of lower order, taking the $\gamma $ operator in the new coordinates
 $(y_1,y_2)$ (which, according to our approximation close to $\Gamma_{0}$,
  has the same principal part) and using  that $\tilde{w}\in G^0$,  we see that:
\begin{equation}\label{approx}
 \gamma_{\alpha \beta}( \tilde{w}^j ) +
  U_{\alpha \beta}(y,D)   \tilde{w}^j =0 \textrm{ on } \R \times (0,t),
\end{equation}
for some $t>0$, $U_{\alpha \beta}$ being differential operator of order less
than the order of $ \gamma_{\alpha \beta}$. 

Now, according to the general trends of our boundary layer approximation, we can
neglect the terms of lower order in (\ref{approx}) and we can
proceed as in the construction of a parametrix (freezing
coefficients, dropping lower order terms, solving such simpler
equation via  tangent Fourier transform and gluing together the
solutions for different $j$), so that (\ref{approx}) becomes
\begin{equation}\label{approx2}
 \gamma_{\alpha \beta}( \tilde{w}^j )  =0 \textrm{ on } \R \times (0,t).
\end{equation}

The previous system can be rewritten as
\begin{equation}\label{approx2bis}
\left\{\begin{array}{l}
\frac{\partial }{\partial y_1}\tilde{w}^j_{1}-b_{11} \tilde{w}^j_{3} =0 ,\\
\frac{\partial }{\partial y_2}\tilde{w}^j_{2}-b_{22} \tilde{w}^j_{3} =0 , \\
\frac{1}{2}\Big(\frac{\partial }{\partial y_2}\tilde{w}^j_{1}+\frac{\partial }{\partial y_1}\tilde{w}^j_{2}\Big)-b_{12} \tilde{w}^j_{3}=0 ,
\end{array}\right.
\end{equation}
and taking the tangential Fourier transform denoted by $\mathcal{F}(\tilde{w}^j) (\xi _1, y_2)$ this yields   
\begin{equation}\label{approx4}
\Big(\hat{\gamma}_0+\tilde{\gamma}_1\frac{\D }{\D y_2}\Big) \mathcal{F}(\tilde{w}^j)=0,
\end{equation}
with
$$\hat{\gamma }_0 =\left(\begin{array}{lll}
-i \xi _1 & 0 &- b_{11} \\
0 & 0 & -b_{22}\\
0 & -i \xi _1 &- 2 b_{12}
\end{array}\right)\textrm{ and }\tilde{\gamma }_1 =\left(\begin{array}{lll}
0 & 0 &0 \\
0 & 1 & 0\\
1 & 0&0
\end{array}\right).$$

The general solution of the system (\ref{approx4}) is:
\begin{equation}\label{solgenerale2}
\mathcal{F}(\tilde{w}^j)(\xi_1,y_2)=A
\tilde{w}_+e^{\lambda_+(\xi _1)y_2}+B
\tilde{w}_-e^{\lambda_-(\xi _1)y_2},
\end{equation}
with $\tilde{w}_+ =\left(\begin{array}{l} i\frac{\lambda_+(\xi _1)}{\xi_1}\frac{b_{11}}{b_{22}}
\\
1\\  
\frac{\lambda_+(\xi _1)}{b_{22}}
\end{array}\right)
$, $\tilde{w}_-=\left(\begin{array}{l} 
i\frac{\lambda_-(\xi _1)}{\xi_1}\frac{b_{11}}{b_{22}}
\\
1\\  
\frac{\lambda_-(\xi _1)}{b_{22}}
\end{array}\right)
$, $\lambda _+(\xi _1)= -i\xi _1 \frac{b_{12}}{b_{11}}+\frac{|\xi _1|}{
b_{11}}\sqrt{b_{11}b_{22}-b_{12}^2}$ and $  \lambda _- (\xi _1)= -i\xi _1 \frac{b_{12}}{b_{11}}-\frac{|\xi _1|}{
b_{11}}\sqrt{b_{11}b_{22}-b_{12}^2}$. 

Since $\tilde{w} \in G_0$, it follows that  $\mathcal{F}(\tilde{w}_{2}^j) (\xi _1, 0)=0$ 
hence $A=-B$. Consequently, we deduce that
\begin{equation}\label{solmodifie2}
\mathcal{F}(\tilde{w}^j) (\xi _1, y_2)=\frac{
b_{11}b_{22}}{2 |\xi _1| \sqrt{b_{11}b_{22}-b_{12}^2}} \mathcal{F}(\tilde{w}^j_{3}) (\xi _1, 0)\Big(\tilde{w}_+e^{\lambda_+(\xi _1)y_2}-\tilde{w}_-e^{\lambda_-(\xi _1)y_2}\Big).
\end{equation}

This expression exhibits the structure of (the Fourier transform of) $\tilde{w}$ in a narrow neigbourhood of $\Gamma _0$. It was expressed in terms of (the Fourier transform of) the trace of its third component on $\Gamma _0$, but this choice is arbitrary.

We now proceed to the modification of $ \tilde{w}^j$ in
$\tilde{w}^{j a}$ in a narrow boundary layer of $\Gamma _0$ in
order to satisfy (always within our approximation) the equation
coming from (\ref{pbvar4bis}) for $\varepsilon =0$ (this is the membrane boundary layer associated with the membrane system of Section \ref{membranesystem}). Using
considerations similar to those leading to (\ref{approx}), 
this
amounts to
\begin{equation}\label{eqmodifie1}
\Big(\tilde{\gamma } ^*\tilde{A}_1 \tilde{\gamma } \Big)\tilde{w}^{ja} +
U(y,D)   \tilde{w}^{ja} = 0 \textrm{ on } \R \times (0,t),
\end{equation}
where $U$ is a differential operator of lower order than four, $\tilde{\gamma }^*$ denotes the operator:
$$\tilde{\gamma }^* =\left(\begin{array}{lll}
\partial _1 & 0 &- b_{11} \\
0 & \partial _2 & -b_{22}\\
\partial _2 & \partial _1 &- 2 b_{12}
\end{array}\right),$$
and 
$$\tilde{A}_1=\left(\begin{array}{lll}
A^{1111} & A^{1122} &A^{1112} \\
A^{2211} & A^{2222} & A^{2212}\\
A^{1211} & A^{1222} &A^{1212}
\end{array}\right).$$

Therefore dropping as before terms of lower order, we have:
\begin{equation}\label{eqmodifie10}
\Big(\tilde{\gamma } ^*\tilde{A}_1 \tilde{\gamma } \Big)\tilde{w}^{ja} = 0 \textrm{ on } \R \times (0,t),
\end{equation}
which can be rewritten as 
\begin{equation}\label{eqmodifie10bis}
\Big((\tilde{\gamma } _0  ^*- \tilde{\gamma }_1^*
\partial_2)\tilde{A}_1 (\tilde{\gamma } _0  + \tilde{\gamma }_1
\partial _2) \Big)\tilde{w}^{ja} = 0 \textrm{ on } \R \times (0,t),
\end{equation}
with $\tilde{\gamma }^*= \overline{\tilde{\gamma }}^T$ and
$$\tilde{\gamma }_0 =\left(\begin{array}{lll}
\partial _1 & 0 &- b_{11} \\
0 & 0 & -b_{22}\\
0 & \partial _1 &- 2 b_{12}
\end{array}\right).$$
Hence taking the tangential Fourier transform, we look for solutions of the system:
\begin{equation}\label{eqmodifie100bis}
\Big((\overline{\hat{\gamma } _0 } ^T- \tilde{\gamma }_1^T\frac{\D }{\D y_2})\tilde{A}_1 (\hat{\gamma } _0  + \tilde{\gamma }_1\frac{\D }{\D y_2}) \Big)\mathcal{F}\Big(\tilde{w} ^{ja}\Big)(\xi _1, y_2)= 0 ,
\end{equation}
with 
$$\hat{\gamma }_0 =\left(\begin{array}{lll}
-i \xi _1 & 0 &- b_{11} \\
0 & 0 & -b_{22}\\
0 & -i \xi _1 &- 2 b_{12}
\end{array}\right).$$

At this moment, it is worthwhile to compare  (\ref{eqmodifie100bis}) and (\ref{approx4}). We see that the given function $\tilde{w}^j$ (rather its Fourier transform) solves the "right half" of  (\ref{eqmodifie100bis}), i.e. the expression on the right of $\tilde{A}_1$ in  (\ref{eqmodifie100bis}). Obviously, the "left half" accounts for the "adjoint part", coming with integration by parts from the bilinear form $a$ (see (\ref{defedeaajout})). Our aim in constructing the modified $\tilde{w}^{ja}$ is to satisfy the conditions $\tilde{w}^{ja}_1=\tilde{w}^{ja}_2=0$ on $y_2=0$, whereas for "large $y_2$" (in the sense of "out of the layer") the modified $\tilde{w}^{ja}$ coincides (up to small terms) with the given $\tilde{w}^{j}$. We now proceed to write down the general solution of (\ref{eqmodifie100bis}) on account of its special structure.

For $\lambda \in \{\lambda _-(\xi _1), \lambda _+(\xi _2)\}$, let us consider the function $k$ defined by: 
\begin{equation}\label{eqmodifie100terbis}
k(\xi _1, y_2) = (y_2 w +v)e^{\lambda y_2} ,
\end{equation}
where $w \in \{\tilde{w} _-, \tilde{w} _+\}$  is a solution of 
$$\Big(\hat{\gamma }_0 +\lambda \tilde{\gamma }_1\Big) w =0,$$
and $v$ is unknown. We then search for solutions of (\ref{eqmodifie100bis}) under the form (\ref{eqmodifie100terbis}) i.e.:
\begin{equation}\label{eqmodifie100ter}
\Big((\overline{\hat{\gamma } _0 } ^T- \tilde{\gamma }_1^T\frac{\D }{\D y_2})\tilde{A}_1 (\hat{\gamma } _0  + \tilde{\gamma }_1\frac{\D }{\D y_2}) \Big)k(\xi _1, y_2)= 0 ,
\end{equation}
We check that 
\begin{eqnarray*}
\Big(\hat{\gamma }_0 + \tilde{\gamma }_1\frac{\D }{\D y_2}\Big)\Big(y_2 w +e^{\lambda y_2} v\Big)=\Big((\hat{\gamma }_0+\lambda \tilde{\gamma }_1)v+\tilde{\gamma }_1  w\Big)e^{\lambda y_2}.
\end{eqnarray*}
So that (\ref{eqmodifie100ter}) becomes
\begin{eqnarray*}
\Big((\overline{\hat{\gamma } _0 } ^T- \tilde{\gamma }_1^T\frac{\D }{\D y_2})\tilde{A}_1 (\hat{\gamma } _0  + \tilde{\gamma }_1\frac{\D }{\D y_2}) \Big)\Big(y_2 w+ v\Big)e^{\lambda y_2}&=&\\
(\overline{\hat{\gamma } _0}  ^T- \tilde{\gamma }_1^T\frac{\D }{\D y_2})\tilde{A}_1  \Big((\hat{\gamma }_0+\lambda \tilde{\gamma }_1)v +\tilde{\gamma }_1  w \Big)e^{\lambda y_2}&=&0 .
\end{eqnarray*}
This amounts to saying that $\tilde{A}_1\Big((\hat{\gamma }_0+\lambda \tilde{\gamma }_1)v+\tilde{\gamma }_1  w  \Big)$ is an eigenvector of $\overline{\hat{\gamma } _0  }^T- \lambda \tilde{\gamma }_1^T$ associated with the zero eigenvalue. Since $\textrm{dim Ker }\Big( \overline{\hat{\gamma } _0}  ^T- \lambda \tilde{\gamma }_1^T\Big)=1$,  denoting by $u_0 $ a non vanishing vector of $ \textrm{Ker }\Big( \overline{\hat{\gamma } _0}  ^T- \lambda \tilde{\gamma }_1^T\Big)$,
then
$v$ should satisfy 
\begin{equation}\label{constructiondev}
(\hat{\gamma }_0+\lambda \tilde{\gamma }_1)v+\tilde{\gamma }_1  w    = \tilde{A}_1 ^{-1} (\tau u_0), \textrm{ for some } \tau \in \C.
\end{equation}
According to the Fredholm alternative, a necessary and sufficient condition for the existence of such a $v$ is that 
$$ \tilde{A}_1 ^{-1}(\tau u_0 ) - \tilde{\gamma }_1  w   \ \in (\textrm{Vect }{u_0})^{\perp}.$$
Since $\tilde{A}_1$ is positive definite, we deduce that  $ \scalprod{ \tilde{A}_1 ^{-1} u_0   }{u_0}>0$ hence 
$\tau = \frac{\scalprod{\tilde{\gamma}_1u}{u_0}}{\scalprod{\tilde{A}_1^{-1}u_0}{u_0}}$ satisfies
$$ \scalprod{\tau \tilde{A}_1 ^{-1} u_0  - \tilde{\gamma }_1 w 
 }{u_0}=0.$$

It follows that the vector $v \in \C ^3$ exists and is unique (up to an additive and arbitrary eigenvector, which is irrelevant in the sequel). Consequently, $k$ defined as above satisfies  (\ref{eqmodifie100ter}).

Repeating this argument twice (first for  $\lambda_+(\xi _1)$, and then for  $\lambda _- (\xi _1)$), and denoting by $v_+$ and $v_-$ the corresponding vectors $v$, we see that   
\begin{eqnarray}\label{solmodifie}
\mathcal{F}\Big(\tilde{w} ^{ja}\Big)(\xi _1, y_2) &=& C_1\tilde{w} _e^{-\lambda _+(\xi _1)y_2}+C_2 \tilde{w}_-e^{-\lambda _-(\xi _1)y_2}+C_3 \Big(y_2\tilde{w}_+ +v_+\Big)e^{\lambda _+(\xi _1)y_2} \nonumber \\
& &+C_4\Big(y_2 \tilde{w}_- + v_-\Big)e^{\lambda _-(\xi _1)y_2},
\end{eqnarray}
with arbitrary $C_1,C_2,C_3, C_4$ is the general solution of (\ref{eqmodifie100bis}).

We are now determining $C_1,C_2,C_3, C_4$ in order to satisfy the boundary conditions $\tilde{w}^{ja}_1=\partial _2\tilde{w}^{ja}_1=0$ at $y_2=0$ and the "matching condition" with $\tilde{w}^{j}$, i.e. in the context of boundary layer theory (for large $|\xi _1 |$), $\tilde{w}^{ja}$ should become $\tilde{w}^{j}$ out of the layer.

Let us now explain the process of matching the layer: out of the
layer, we want $\tilde{ w} ^{ja }$ to match with the given function
$ \tilde{ w}^j$. Since $|\xi _1 |>>1$, then $|\xi _1 | y_2 >>1$ and $ \frac{ \sqrt{b_{11}b_{22}-b_{12}^2}}{b_{11}} |\xi _1 |y_2>>1$ which
means that $y_2 >>\frac{b_{11}}{\sqrt{b_{11}b_{22}-b_{12}^2}}\frac{1}{|\xi _1|}$ (but we still impose that
$y_2$ is small in order to be in a narrow layer of $\Gamma _0$ where (\ref{solmodifie2}) holds);
this is perfectly consistent, as we will only use the functions
for large $|\xi_{1}|$, hence the terms with coefficients $ C_2$ and $C_4 $
 are "boundary layer terms" going to zero out of the
layer (i.e. for $|y_2 |>>\mathcal{O}\Big( \frac{1}{|\xi
_1|}\Big)$).

The matching with (\ref{solmodifie2}) out of the
layer then gives
\begin{equation}\label{transmission}
C_3=0 \textrm{ and } C_1 =
\frac{
b_{11}b_{22}}{2 |\xi _1| \sqrt{b_{11}b_{22}-b_{12}^2}} \mathcal{F}(\tilde{w}^j_{3}) (\xi _1, 0).
\end{equation}
The two other constants $C_2$ and $C_4$ are determined by
$$\mathcal{F}(\tilde{w}^{ja}  )_1 (\xi _1, 0)=0 \textrm{ and } \mathcal{F}(\tilde{w}^{ja}  )_2 (\xi _1, 0)=0,$$
which yields the existence of two constants $\alpha$ and $\beta$ such that
$$C_2=\alpha C_1\textrm{ and } C_4=\beta C_1.$$
So that the modified solution is of the form:
\begin{eqnarray} \label{solmodifie22bis}
\mathcal{F}(\tilde{w}^{ja} ) (\xi _1, y_2)&=& \frac{
b_{11}b_{22}}{2 |\xi _1| \sqrt{b_{11}b_{22}-b_{12}^2}} 
\Big(\tilde{w} _+e^{\lambda _+(\xi _1)y_2}\\
& &+((\alpha +\beta y_2 )\tilde{w}_- +\beta v_-)e^{\lambda _-(\xi _1)y_2}\Big)\mathcal{F}(\tilde{w}^j_{3}) (\xi _1, 0).\nonumber
\end{eqnarray}

The modification of the function
$\tilde{w}_{j}$ then consists in adding to it the inverse Fourier
transform of
\begin{equation}\label{V1}
 \frac{
 b_{11}b_{22}}{2 |\xi _1| \sqrt{b_{11}b_{22}-b_{12}^2}} \Big((\alpha+1 +\beta y_2 )\tilde{w}_- +\beta v_-\Big)e^{\lambda _-(\xi _1)y_2}\mathcal{F}(\tilde{w}^j_{3}) (\xi _1, 0).
\end{equation}

We shall study in the sequel the behavior of such an expression. The role of the constants $\alpha$ and $\beta$ is not relevant, and we may assume, for instance that $\alpha=-1$ and $\beta =1$ (this amounts to change $\tilde{w}_-$ and $\tilde{v}_-$). As the result, the modification of the function
$\tilde{w}_{j}$ consists in adding to it the inverse Fourier
transform of
\begin{equation}\label{V1bis}
 \frac{
 b_{11}b_{22}}{2 |\xi _1| \sqrt{b_{11}b_{22}-b_{12}^2}} \Big( y_2 \tilde{w}_- + v_-\Big)e^{\lambda _-(\xi _1)y_2}\mathcal{F}(\tilde{w}^j_{3}) (\xi _1, 0).
\end{equation}

More precisely, on account of considerations at the beginning of Section 6 (see in particular (\ref{S.1}) and (\ref{S.2})), the modification should only be effective for large $|\xi_{1}|$, accounting for "singular parts" of the solution. Moreover, in order to have $\tilde{w}^a \in V$, we shall also impose $\tilde{w}^{ja} _1= \partial _2\tilde{w}^{ja} _1=0$ on $\Gamma _0$ (the other two conditions $\tilde{w}_3^a = \partial _n \tilde{w}^a _3=0$ on $\Gamma _0$ will be adressed in Section \ref{6.2.2}). To this end, we multiply the added term by a cutoff function  avoiding low
frequencies (It should be remembered that this is one of the
typical devices in the construction of a parametrix).  More
precisely, on account of (\ref{S.2}), we shall only keep
frequencies of order more or equal than
$[log(1/\varepsilon)]^{1/2}$, which preserve the useful region
(\ref{S.2}) and are large (then consistent with the fact that the
modification is a layer).  Moreover, in order to the modified function satisfy the boundary conditions, we must also take into account the low frequencies of the Fourier transform which we multiply by a smooth vector $\rho (y_2)$ such that $\rho _1 (0)=\rho _2 (0)=0$ and $\rho (y_2)=0$ for $y_2>C$ for a certain $C$. The division into high and low frequencies is defined by a smooth function $H(z)$ equal to $1$ for $|z| > 1$ and vanishing for $|z|< 1/2$, with $z=\frac{\xi}{[log(1/\varepsilon)]^{1/2}}$. Finally, we define the function 
\begin{eqnarray}\label{V2}
& &h(\varepsilon, \xi, y_2) =(1-H(\frac{\xi_1}{[log(1/\varepsilon)]^{1/2}}))\rho (y_2)+ \\
 & & 
  \frac{
  b_{11}b_{22}}{2 |\xi _1| \sqrt{b_{11}b_{22}-b_{12}^2}} \Big( y_2\tilde{w}_- + v_-\Big)e^{\lambda _-(\xi _1)y_2}H(\frac{\xi}{[log(1/\varepsilon)]^{1/2}}),\nonumber
\end{eqnarray}
which obviously has its first and second components vanishing for $y_2=0$. 
Now we can modify the function $
\tilde{w}_{j}$ by
\begin{equation}\label{V3}
\delta \tilde{w}_{j}\equiv \tilde{w}^{a}_{j}-\tilde{w}_{j},
\end{equation}
where $\delta \tilde{w}_{j}$ is defined by its Fourier transform:
\begin{equation}\label{V4}
 \mathcal{F}\Big(\delta \tilde{w}_{j}\Big) =
\mathcal{F}(\tilde{w}^j_{3}) (\xi _1, 0)h(\varepsilon,\xi , y_2).
\end{equation}

\begin{remark}
The constant $C$ in the definition of $\rho (y_2)$ is chosen sufficiently small for this function to vanish out of the layer of $\Omega $ close to $\Gamma _0$ where the curvilinear coordinates $y_1, y_2$ operate. Rigorously speaking, the rest of the expression should also be multiplied by a cut-off function vanishing  for $y_2 >C$, but this is practically not necessary, as this part is exponentially  small for large $|\xi _1 |$. 
\end{remark}

Hence summing over $j$ and defining on $\Gamma_{0}$ the family
(with parameter $y_{2}$) of pseudo-differential smoothing
operators $\delta\sigma(\varepsilon,D_{1},y_{2})$ with symbol:
\begin{equation}\label{V5}
 \delta \sigma (\varepsilon,\xi _1, y_2)=  \frac{|b_{11}|b_{22}}{2 |\xi _1| \sqrt{b_{11}b_{22}-b_{12}^2}} \Big( y_2 \tilde{w}_- + v_-\Big)e^{\lambda _-(\xi _1)y_2}, 
\end{equation}
we see that the modification of the function $ \tilde{w}$:
\begin{equation}\label{V6}
\delta \tilde{w} =  \tilde{w}^{a}- \tilde{w}
\end{equation}
is precisely the action of $\delta\sigma(\varepsilon,D_{1},y_{2})$
on $\tilde{w}^j_{3} (y _1, 0)$.

Once $\tilde{w}^a$ is constructed, it is worthwhile computing its $a$-energy. This we proceed to do. More generally, we shall compute the form $a$ for two functions $\tilde{v}^a$ and $\tilde{w}^a$.
 
Let us now compute the leading terms of the $a$-energy  of the
modified function $\tilde{w}^a$.

Let $\tilde{v}$ and $\tilde{w}$ be two elements in $G^{0}$ and
$\tilde{v}^{a}, \tilde{w}^{a}$ the corresponding elements modified
in the boundary layer. As the given $\tilde{v}$ and $\tilde{w}$
satisfy $\gamma _{\alpha \beta }(\tilde{v})= \gamma _{\alpha \beta }(\tilde{w})=0$, the $a$-form is only concerned with the
modification terms $\delta\tilde{v}$ and $\delta\tilde{w}$. Then,
within our approximation, we have:
\begin{equation}\label{V8}
a( \tilde{v} ^a, \tilde{w}^a)=  \int _{\Gamma _0 } A^{\alpha \beta \lambda \mu }dy_1 \int _0
^{+\infty } \gamma_{\alpha \beta} (\delta \tilde{v})\overline{\gamma _{\lambda \mu}
(\delta \tilde{w}})\D y_2.
\end{equation}
where the integral in $\D y_{2}$ is only effective in the narrow
layer. Using the partition of the unity $\theta_{j}$ and denoting
as before by $\delta w_{j}(\cdot,y_{2})$ the extension with value
$0$ to $\mathbb{R}$ of $\theta_{j}(\cdot,y_{2})  \delta
w(\cdot,y_{2}) $, we have
\begin{equation}\label{V9}
a( \tilde{v} ^a, \tilde{w}^a)= \Sigma _{j, k} \int _{\Gamma _0 }A^{\alpha \beta \lambda \mu } d
y_1 \int _0 ^{+\infty } \gamma _{\alpha \beta } (\delta
\tilde{v}_{j})\overline{\gamma _{\lambda \mu}  (\delta \tilde{w}_{k}})\D y_2.
\end{equation}

Consequently, using the tangential Fourier transform $y_1 \to
\xi_{1} $ and the Parceval-Plancherel theorem, dropping lower order terms  (within our approximation, we only consider expressions with large $|\xi _1 |$ which amounts to take $H=1$ in (\ref{V2})), we deduce
that
\begin{eqnarray*}
& &a(\tilde{v}^a, \tilde{w}^a)= \\
 & &  \Sigma _{j,k} \int _{-\infty } ^{+
\infty}\tilde{A}_1 \D \xi _1 \int _0 ^{+\infty } \Big(\hat{\gamma _0}+\tilde{\gamma }_1\frac{\D }{\D y_2}\Big)  \delta \sigma (\varepsilon,\xi, y_2)\mathcal{F}(\tilde{v}^j_{3}) (\xi _1, 0)\big)\times \nonumber \\
& &\overline{\Big(\hat{\gamma _0}+\tilde{\gamma }_1\frac{\D }{\D y_2}\Big)  \delta
\sigma (\varepsilon,\xi, y_2) \mathcal{F}(\tilde{w}^k_{3}) (\xi _1, 0)\big)}\D y_2 =
 \\
& &\Sigma _{j,k} \int _{-\infty } ^{+
\infty} \tilde{A}_1\D \xi _1 \int _0 ^{+\infty }\frac{
b_{11}b_{22}}{2 |\xi _1| \sqrt{b_{11}b_{22}-b_{12}^2}} \Big((\hat{\gamma }_0+\lambda _- \tilde{\gamma }_1)v_-+\tilde{\gamma }_1  \tilde{w} _- \Big) e^{\lambda _- y_2}\mathcal{F}(\tilde{v}^j_{3}) (\xi _1, 0)\big)\times \nonumber \\
& &\overline{\frac{
b_{11}b_{22}}{2 |\xi _1| \sqrt{b_{11}b_{22}-b_{12}^2}} \Big((\hat{\gamma }_0+\lambda _-\tilde{\gamma }_1)v_-+\tilde{\gamma }_1  \tilde{w }_- \Big) e^{\lambda _- y_2} \mathcal{F}(\tilde{w}^k_{3}) (\xi _1, 0)\big)}\D y_2
\end{eqnarray*}
Hence, on account of the definitions of $\hat{\gamma }_0$, $\tilde{\gamma }_1$, $\lambda _-$ and $\tilde{w}_-$ 
integrating in $y_2$, we know that
\begin{equation}\label{energiea1}
a(\tilde{v}^a, \tilde{w}^a)=\Sigma _{j, k}\int _{-\infty } ^{+
\infty} \theta   |\xi _1|\mathcal{F}(\tilde{v}^{j})_{3|y_2=0} \overline{\mathcal{F}( \tilde{w}^{k})_{3|y_2=0}} h^{2}(\varepsilon,\xi , y_2) \D \xi _1,
\end{equation}
with $\theta = \theta (A^{\alpha \beta \lambda \mu}, (v_-)_1(0), b_{\alpha \beta}, \mu _-)$, where $\mu _-=\frac{\lambda _(\xi _1)}{|\xi _1|}$ is independent of $\xi _1$.

This expression (\ref{energiea1}) only depends on the trace
$ (\tilde{v}^{j}) _{3|y_2=0}(y_1)$ and
 $(\tilde{w}^{k} )_{3|y_2=0}(y_1)$, which
are functions defined on $\Gamma _0$.

\begin{remark}
The important fact in (\ref{energiea1}) is the presence of $|\xi _1|$. This comes from $\int_0^{+\infty} e^{-\lambda _- y_2} \D y_2$ and analogous, on account that this integral is equal to $\frac{C}{| \xi _1| }$.
\end{remark}

We now simplify this last expression using a sesquilinear form
involving pseudo-differential operators.

Then, defining the elliptic pseudo-differential operator $P(y_1,\frac{\partial
}{\partial y_1})$ of order $1/2$ with principal symbol
\begin{equation}\label{pseudodiff}
 (\theta  |\xi _1 |)^{1/2}h(\varepsilon,\xi , y_2),
\end{equation}
and summing over $j$ and $k$, we obtain
\begin{equation}\label{energiea3bis}
a(\tilde{v}^a, \tilde{w}^a)=  \int _{\Gamma _0} P(\frac{\partial
}{\partial s}) ( \tilde{v}_3 )_{|\Gamma
_0}\overline{P(\frac{\partial }{\partial s})
(\tilde{w} _3)_{|\Gamma _0}} \D s.
\end{equation}

\begin{remark}
As we only considered the principal terms for large $|\xi _1 |$,
we may define as well $P(\xi _1)$ by the symbol
\begin{equation}\label{pseudodiff2}
P(\xi _1 )= \theta(1+ |\xi _1 |^2)^{1/4}.
\end{equation}
The corresponding pseudo-differential operator is elliptic of order $1/2$.
\end{remark}
\begin{remark}
We shall use the definition (\ref{pseudodiff2}), which is more pleasant than (\ref{pseudodiff}), as such a $P$ defines
an isomorphism from $H^s ( \Gamma _0 ) $ onto $H^{s+1/2}(\Gamma _0)$, $s \in \R$.
\end{remark}

\subsubsection{The flection sublayer on $\Gamma _0$}\label{6.2.2}
\noindent

The structure of the flection sublayer, see the beginning of Section 6, accounting for the two new boundary conditions $\tilde{w}_3=\partial _n \tilde{w}_3=0$ follows from classical issues in singular perturbation theory, as in \cite{EgorovMeunierSanchez} section 7.1.2, \cite{Vishik} and  \cite{Georgescu}. It is mainly concerned with a drastic change of the normal component $\tilde{w}_3$ (whereas the conditions on $\tilde{w}_1$ and $\tilde{w}_2$ are satisfied). The specific structure is analogous to the layer in \cite{San3}.

The thickness is of order $\delta = \varepsilon ^{1/2}$. This may be easily seen by taking into account only higher order terms in the membrane and the flection systems; eliminating $\tilde{w}_1$ and $\tilde{w}_2$, we obtain an equation for $\tilde{w}_3$. The membrane terms are of order 4 and the flection terms are of order 8. In the layer, the derivatives of order $n$ have an order of magnitude $\mathcal{O}(\frac{\tilde{w}_3}{\delta ^n })$. As both membrane and flection terms are of the same order of magnitude in the layer, we thus have 
$$ \mathcal{O}(\frac{\tilde{w}_3}{\delta ^4 })= \varepsilon ^2\mathcal{O}(\frac{\tilde{w}_3}{\delta ^8 }), $$
which furnishes $\delta = \mathcal{O}(\varepsilon ^{1/2 })$.

It is easily seen (as in \cite{EgorovMeunierSanchez} section 7.1.2) that the presence of this flection sublayer plays a negligible role in the asymptotic behavior. Indeed, proceeding as in the previous membrane layer, we see that the expression analogous to (\ref{energiea3bis}) has the form:
\begin{equation}\label{energieaflexion}
\varepsilon ^2a_0(\tilde{v}^a, \tilde{w}^a)=  \varepsilon ^2\int _{\Gamma _0} P_0(\frac{\partial
}{\partial s}) ( \tilde{v} )_{|\Gamma
_0}\overline{P_0(\frac{\partial }{\partial s})
(\tilde{w} )_{|\Gamma _0}} \D s,
\end{equation}
where $P_0$ is an operator of order $0$. Going on to next Section \ref{dernieresection}, the action of sublayer amounts to change $\mathcal{A}$ to $\mathcal{A}+\varepsilon ^2 \mathcal{C}$ where $\mathcal{C}$ is a smoothing operator. Equivalently, we may change $\mathcal{B}$ to $\mathcal{B}+\mathcal{C}$ which is again a $3$-order operator (as $\mathcal{C}$ is smoothing). The asymptotic behavior does not change. Equivalently, in  (\ref{S.1}), the effect of the sublayer is to change $s $ to $s +\varepsilon ^2 s _0$ where $s _0$ is a smoothing symbol, or $q$ to $q+s_0$ which is again the symbol of an operator of order $2m>0$.

For that reasons, the influence of the sublayer will no more be mentioned.

\subsection{Formulation of the problem in the heuristic asymptotics}
\noindent

Presently, our aim is to formulate problem (\ref{convergenceajout1}) on the space of the $u^a$ with $u \in G^0$. The forms $b(u,v)$ and $\dualprod{f}{v}$ should be written in the framework of our formal asymptotics, for $\tilde{u}^a$ and $\tilde{v}^a$ obtained from $u$ and $v$ defined on $\Gamma _1$ by solving (\ref{pbdirichletajout12}) and modifying $\tilde{u}$ and $\tilde{v}$ with the $\Gamma _0$-layer.

The computation of the $b$-energy form is exactly analogous to that of \cite{EgorovMeunierSanchez} Sec. 5.3. It follows the ideas of the previous section in a much simpler situation. As only the third component is involved in the higher order terms of the form $b$ (see (\ref{defdeB}) and (\ref{gammabis})), we have
\begin{equation}\label{energybending}
b(\tilde{u}^a, \tilde{v}^a) \approx  \int_{\Omega } B^{\alpha \beta \lambda \mu} \partial_{\alpha \beta } \tilde{u}^a_3\partial_{\lambda \mu}\tilde{v}^a_3 \D \xi \D x.
\end{equation}

Moreover, from (\ref{definitiondeg0})--(\ref{pbdirichletajout12}) and according to our approximations analogous to the construction of a parametrix, $\tilde{u}$, $\tilde{v}$ are only significant in a narrow layer adjacent to $\Gamma _1$. The local structure is analogous to (\ref{solgenerale2}) where obviously the decreasing solution inwards the domain should be chosen. This gives the obvious local asymptotics
\begin{equation}\label{sensitive621}
\hat{\tilde{v}}_3(\xi, y)=\hat{v}_3(\xi _1)  e^{\lambda _- ( \xi _1) y_2},
\end{equation}
where $\lambda _-(\xi _1)$ is proportional to $| \xi _1|$. After substitution (\ref{sensitive621}) in (\ref{energybending}) a computation analogous to that of Section \ref{6.2.1} (but much easier) gives (using a partition of unity):
$$b(\tilde{u}^a, \tilde{v}^a)=\Sigma _{j,k} \int_{-\infty}^{+\infty} \zeta _{jk}(y_1)| \xi _1|^3 \tilde{u}^j _3 (\xi_1)\tilde{v}^k _3(\xi_1)\D \xi _1$$
where $\zeta _{jk}(y_1) $ are smooth positive functions on $\Gamma _1$ depending on the coefficients. The function $| \xi _1|^3$ comes obviously from the integrals in the normal direction of products of second order derivatives of functions of the form $e^{\lambda _- ( \xi _1) y_2}$, with $\lambda _-(\xi _1)$ proportional to $| \xi _1|$.

Then, defining the pseudo-differential operator $Q(\frac{\partial
}{\partial y_1})$ of order $3/2$ with principal symbol
\begin{equation}\label{pseudodiffQ}
\sqrt{\zeta(y_1) |\xi _1 |^{3}},
\end{equation}
we have within our approximation:
\begin{equation}\label{energieb3}
\int_{\Omega } B^{\alpha \beta \lambda \mu} \partial_{\alpha \beta } u_3 \partial_{\lambda \mu } v_3  \D x= \int _{\Gamma _1 }Q(\frac{\partial }{\partial y_1})u  \ Q(\frac{\partial }{\partial y_1})v\D y_1.
\end{equation}

We observe that the operator $Q$ is only concerned with the trace
on $\Gamma_{1}$ and $y_1$ which denotes its arc.

The formal asymptotic problem becomes:
\begin{equation}\label{pblaxasymptotic2}
\left\{\begin{array}{l}\textrm{Find }  \tilde{u}^{\varepsilon }\in G \textrm{ such that } \forall \tilde{v} \in G\\
\int _{\Gamma _0} P(\frac{\partial \tilde{u}^{\varepsilon }  }{\partial n})
\overline{P(\frac{\partial \tilde{v}  }{\partial n}) }\D s + \varepsilon ^2
\int _{\Gamma _1} Q (\tilde{u}^{\varepsilon })  \ \overline{Q(\tilde{v})}\D s =
\dualprod{ f}{w},
\end{array}\right.
\end{equation}
where $G$ is the completion of $G^0$ for the norm
$$\| \tilde{v}\|^2_G=\int_{\Gamma _0} \Big | P(\frac{\partial v}{\partial n})\Big| ^2 \D s +\int_{\Gamma _1} \Big | Q(v_3)\Big| ^2 \D s$$

\begin{remark}
For $\varepsilon >0$, (\ref{pblaxasymptotic2}) is a classical Lax-Milgram problem. Continuity and coerciveness follow from the ellipticity of the operators $P$ and $Q$.
\end{remark}

\subsection{The formal asymptotics and its sensitive behaviour}\label{dernieresection}
\noindent

In the sequel, we shall denote
\begin{eqnarray}\label{defdealpha}
\alpha (\tilde{v}^{\varepsilon }, \tilde{w})&=&\int _{\Gamma _0} P(\frac{\partial \tilde{v}^{\varepsilon }  }{\partial n})
\overline{P(\frac{\partial \tilde{w}  }{\partial n})} \D s \\
\beta (\tilde{v}^{\varepsilon }, \tilde{w})&=&\int _{\Gamma _1} Q (\tilde{v}^{\varepsilon })  \ \overline{Q(\tilde{w})} \D s.
\end{eqnarray}

We observe that the problem (\ref{pblaxasymptotic2})
 is again in the same abstract framework
as the initial problem (\ref{pbvar}). Nevertheless, the context is different, as the non-local character of the new problem is apparent from the structure of the space $G$. Let us define the operators
\begin{equation}\label{operateur}
\mathcal{A} \in \mathcal{L}(G, G'), \quad \mathcal{B} \in \mathcal{L}(G, G')
\end{equation}
by
\begin{equation}\label{defdealpha1}
\alpha (v, w)=\dualprod{\mathcal{A} v}{w}  
\quad
\beta (v, w)=\dualprod{\mathcal{B} v}{w} .
\end{equation}

Let $G_{\mathcal{A}}$ be the completion of $G$ with the norm
\begin{equation}
\|v \| _{\mathcal{A}} = \| \mathcal{A}v \|_{G'}.
\end{equation}

Denoting again by $\mathcal{A}$  its extension to $ \mathcal{ L}
(G_{\mathcal{A}}, G')$, which is an isomorphism, we may rewrite 
(\ref{pblaxasymptotic2}) in the form:
\begin{equation}\label{pblaxasymptotic3}
\Big( \mathcal{A} + \varepsilon \mathcal{B}\Big) \tilde{v}^{\varepsilon} =F,
\end{equation}
where $F \in G'$ is defined by
\begin{equation}\label{defdeF}
\dualprod{F}{\tilde{w}}
= \int _{\Omega } f \tilde{w}\D x , \ \forall \tilde{w} \in V.
\end{equation}
It follows that
\begin{equation}\label{convergence}
\tilde{v}^{\varepsilon} \to \tilde{v}^0 \textrm{ strongly in } G_{\mathcal{A}},
\end{equation}
where
\begin{equation}\label{defdev0}
\mathcal{A} \tilde{v}^0 =F.
\end{equation}

\textit{Reduction to a problem on $\Gamma_{1}$}

In order to exhibit more clearly the unusual character of the
problem, we shall now write (\ref{pblaxasymptotic2}) in another,
equivalent form involving only the traces on $\Gamma_{1}$ . Coming back to
(\ref{pbdirichletajout12}), let us define $\mathcal{R}_0$ as follows. For
a given $w \in C^{\infty }(\Gamma _1)$ we solve
(\ref{pbdirichletajout12}) and we obtain
\begin{equation}\label{defdeR}
 \tilde{w}_{3} = \mathcal{R}_0 w.
\end{equation}

Using the regularity properties of the solution of (\ref{pbdirichletajout12}), it follows that $\mathcal{R}_0 w$ is in
$C^{\infty }(\Gamma _0)$. Moreover, we may take in (\ref{pbdirichletajout12}) a $w$ in any $H^s(\Gamma _1)$, $s \in \R$ and
the corresponding solution is of class $C^{\infty }$ on $\Gamma _0$ and its neighbourhood, so that $\mathcal{R}_0$ has an extension
which is continuous from $H^s(\Gamma _1)$ to $C^{\infty }(\Gamma _0)$. We shall denote  by $\mathcal{R}_0$
such an extension, so that
\begin{equation}\label{extensiondeR0}
\mathcal{R}_0 \in \mathcal{L}(H^s(\Gamma _1),H^{r }(\Gamma _0)), \forall s, r \in \R.
\end{equation}
Then, (\ref{pblaxasymptotic2}) may be written as a problem for the traces on $\Gamma _1$:
\begin{equation}\label{pblaxasymptotic3}
\left\{\begin{array}{l}\textrm{Find }  v^{\varepsilon }\in
H^{3/2}(\Gamma _1) \textrm{ such that } \forall w \in
H^{3/2}(\Gamma _1)\\
\int _{\Gamma _0} P(\frac{\partial   }{\partial s})
\mathcal{R}_0v^{\varepsilon } \overline{P(\frac{\partial
}{\partial s}) \mathcal{R}_0w }\D s + \varepsilon ^2 \int _{\Gamma
_1} Q(\frac{\partial   }{\partial s}) v^{\varepsilon }  \
\overline{Q(\frac{\partial }{\partial s})w}\D s = \int_{\Omega }
F\tilde{w} \D x,
\end{array}\right.
\end{equation}
where the configuration space is obviously $H^{3/2}(\Gamma _1)$.
The left hand side with $\varepsilon >0$ is continuous and
coercive.

\begin{remark}
Coerciveness follows from the ellipticity of $Q$, as it is of order $3/2$. Strictly speaking, this only ensures coerciveness on the leading order terms, which may "forget" a finite-dimensional kernel. But this is controlled by $\mathcal{R}_0$, as it is a surjective operator. Indeed, $\mathcal{R}_0 v=0$ implies $\gamma _{\alpha \beta } (\tilde{v} )=0$ with $\tilde{v}_3=\tilde{v}_2=0$ on $\Gamma _0$, which implies $\tilde{v}=0$ (and then $v=0$) using the uniqueness of the Cauchy problem for the rigidity system.
\end{remark}

Here $F \in H^{-3/2}(\Gamma _1)$ is defined for $f \in V'$ by
\begin{equation}\label{defdeF2}
\dualprod{F}{w}_{H^{-3/2}(\Gamma _1), H^{3/2}(\Gamma _1)}=\dualprod{f}{\tilde{w}}.
\end{equation}

We note that, for instance, when the "loading" $f$ is defined by a
"force" $F$ on $\Gamma _1$, this function is the $F$ in
(\ref{pblaxasymptotic3}).
Obviously, (\ref{pblaxasymptotic3}) may be written:
\begin{equation}\label{pbforasymptotic4}
\Big(\mathcal{R}_0^* P^*(\frac{\partial   }{\partial s})
P(\frac{\partial   }{\partial s}) \mathcal{R}_0 + \varepsilon ^{2}
Q^*(\frac{\partial   }{\partial s}) Q(\frac{\partial   }{\partial
s})  \Big) \tilde{v}^{\varepsilon } = F.
\end{equation}
From (\ref{extensiondeR0}) we see that $ \mathcal{R}_0^*  $ is
also a smoothing operator, i. e.:
\begin{equation}\label{defdeRO2}
\mathcal{R}_0^*  \in \mathcal{L}(H^{-r}(\Gamma _1),H^{-s }(\Gamma
_0)), \forall s, r \in \R.
\end{equation}
Now we define the new operators (but we use the same notations)
\begin{eqnarray}\label{operateur2}
\mathcal{A}&=& \mathcal{R}_0^*  P^* P \mathcal{R}_0\  \in
\mathcal{L}(H^s(\Gamma _1),H^{r }(\Gamma _0)), \forall s, r \in \R, \\
\mathcal{B}&=&   Q^* Q\  \in \mathcal{L}(H^{3/2}(\Gamma _1) , H^{-
3/2}(\Gamma _1)).
\end{eqnarray}
Obviously, $\mathcal{B}$  is an elliptic pseudo-differential
operator of order 3, whereas $\mathcal{A}$ is a smoothing (non
local) operator. Then (\ref{pbforasymptotic4}) becomes
\begin{equation}\label{pblaxasymptotic4}
\Big( \mathcal{A} + \varepsilon ^{2}\mathcal{B}\Big)
v^{\varepsilon} =F \textrm{ in } H^{- 3/2}(\Gamma _1).
\end{equation}
Once more, the problem (\ref{pblaxasymptotic3}) is in the general framework of (\ref{pbvar}), so that we can define the space
$\mathcal{V}= H^{3/2}(\Gamma _1)$ and its completion $\mathcal{V}_A$ with the norm
\begin{equation}\label{norme4}
\| v \| _{\mathcal{ A}}=\| \mathcal{ A}v \| _{H^{-3/2}(\Gamma
_1)}.
\end{equation}

Denoting similarly by $\mathcal{A}$ the continuous extension of
$\mathcal{A}$, which is an isomorphism between
$\mathcal{V}_{\mathcal{A}}$ and $\mathcal{V}'$, we obtain
\begin{equation}\label{convergence2}
u^{\varepsilon} \to u^0 \textrm{ strongly in } \mathcal{V}_{\mathcal{A}},
\end{equation}
where $u^0 \in \mathcal{V}_{\mathcal{A}}$ satisfies
\begin{equation}\label{defdeu0}
\mathcal{A} u^0 =F.
\end{equation}

Obviously, this equation is uniquely solvable in $\mathcal{V}_{\mathcal{A}}$ for
$F \in \mathcal{V}'=H^{-3/2}(\Gamma _1)$. But, the unusual character of this equation appears now clearly:
\begin{proposition}
Let $F \in H^{-3/2}(\Gamma _1)$ and $F \notin C^{\infty }(\Gamma
_1)$, then the problem (\ref{defdeu0}) has no $u^0$ solution in
$\mathcal{D}'(\Gamma _1)$.
\end{proposition}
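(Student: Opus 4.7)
The plan is to prove the proposition by contradiction, using in a crucial way the fact that $\mathcal{A}$ is a smoothing operator on $\Gamma_1$. If a distributional $u^0$ solved (\ref{defdeu0}), then applying $\mathcal{A}$ would force $F$ itself to be $C^\infty$, against the hypothesis.

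The first step will be to make explicit that $\mathcal{A} = \mathcal{R}_0^* P^* P \mathcal{R}_0$ is smoothing, i.e.\ sends every distribution on $\Gamma_1$ into $C^\infty(\Gamma_1)$. The key input is (\ref{extensiondeR0}), which asserts $\mathcal{R}_0 \in \mathcal{L}(H^s(\Gamma_1), H^r(\Gamma_0))$ for all $s,r \in \R$; by transposition, $\mathcal{R}_0^*$ enjoys the dual property of mapping, for all $s, r$, distributions in $H^{-r}(\Gamma_0)$ continuously into $H^{-s}(\Gamma_1)$, i.e.\ it produces $C^\infty$ functions on $\Gamma_1$ from arbitrary distributions on $\Gamma_0$. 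Since $P^* P$ is a classical elliptic pseudo-differential operator of order $1$ on $\Gamma_0$, it shifts the Sobolev scale only by a finite amount and in particular preserves $C^\infty(\Gamma_0)$. Composing the three factors yields $\mathcal{A}: H^{-N}(\Gamma_1) \to C^\infty(\Gamma_1)$ continuously for every $N$.

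The second step will use the compactness of $\Gamma_1$ (diffeomorphic to the unit circle, as recalled in the excerpt): every $u^0 \in \mathcal{D}'(\Gamma_1)$ belongs to $H^{-N}(\Gamma_1)$ for some $N \in \N$. Thus, if one assumes for contradiction that some $u^0 \in \mathcal{D}'(\Gamma_1)$ satisfies $\mathcal{A} u^0 = F$, the smoothing step gives $F = \mathcal{A} u^0 \in C^\infty(\Gamma_1)$, which contradicts the standing assumption $F \notin C^\infty(\Gamma_1)$.

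I do not expect a serious obstacle: the pathology of $\mathcal{A}$ was already isolated just after (\ref{operateur2}), so the proof is essentially a direct reading of its smoothing character. The only point that deserves care is the duality argument giving the mapping property of $\mathcal{R}_0^*$ from (\ref{extensiondeR0}); once that is in place, the conclusion follows in one line. One could phrase the result equivalently by saying that $\mathcal{A}(\mathcal{D}'(\Gamma_1)) \subset C^\infty(\Gamma_1)$, so that the range of $\mathcal{A}$ on distributions misses every $F \in H^{-3/2}(\Gamma_1) \setminus C^\infty(\Gamma_1)$, which is precisely the sensitive phenomenon announced in the introduction.
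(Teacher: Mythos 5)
Your proof is correct and follows essentially the same route as the paper: the paper's argument likewise places $u^0$ in some $H^s(\Gamma_1)$ by compactness of $\Gamma_1$ and then invokes the smoothing property of $\mathcal{A}$ recorded in (\ref{operateur2}) to conclude $\mathcal{A} u^0 \in C^\infty(\Gamma_1)$, contradicting $F \notin C^\infty(\Gamma_1)$. You simply spell out the duality step giving $\mathcal{R}_0^* \in \mathcal{L}(H^{-r}(\Gamma_0), H^{-s}(\Gamma_1))$ for all $s,r$, which the paper states directly as (\ref{defdeRO2}).
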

\begin{proof}
If $u^0  \in \mathcal{D}'(\Gamma _0)$ was a solution of
(\ref{defdeu0}), as $\Gamma_{1}$ is compact, $u^0$ should be in
some $H^{s}$, then recalling (\ref{operateur2}), we should have
$\mathcal{A} u^0 \in C^{\infty }(\Gamma _0) $, which is not
possible. Moreover, (\ref{pblaxasymptotic4}) is clearly of the form (\ref{S.1}).
\end{proof}

\noindent


\begin{thebibliography}{999}

{\small

\bibitem{ADN}      S. \textsc{Agmon}, A. \textsc{Douglis} and L. \textsc{Nirenberg},
Estimates near the boundary for solutions of elliptic partial
differential equations satisfying general boundary conditions Part II,
{\it Commun. Pure. Applied Math.},  vol. 17, p. 35-92, 1965.


\bibitem{Bechet}
F. \textsc{Bechet}, O. \textsc{Millet} and E. \textsc{Sanchez-Palencia}, Singular perturbations generating complexification phenomena in elliptic shells,  {\it Comput. Mech.}, \textbf{43}, p. 207-221,
2009.




\bibitem{Caillerie}
D. \textsc{Caillerie}, Etude g\'{e}n\'{e}rale d'un type de probl\`{e}mes raides et de perturbation singuli\`{e}re,  {\it Comptes
 Rendus Math\'{e}matique}, v. 323, p. 835-840,
1996.







\bibitem{Ciarlet}
P.G \textsc{Ciarlet}, \emph{Mathematical elasticity, {V}ol. {III}, Theory of shells},
    Studies in Mathematics and its Applications, volume 29, North-Holland Publishing Co., Amsterdam, 2000, p lxii+599.

\bibitem{CouHil}R. \textsc{Courant} and D.\textsc{ Hilbert}, \emph{Methods of mathematical physics, volume II},
Interscience Publishers, Inc., New York, 1962.



\bibitem{DN}       A. \textsc{Douglis} and L. \textsc{Nirenberg},
Interior estimates for elliptic systems of partial
differential equations,
{\it Commun. Pure. Applied Math.},  vol. 8, p. 503-538, 1958.


\bibitem{EgorovMeunierSanchez} Y. V. \textsc{Egorov}, N. \textsc{Meunier} and E. \textsc{Sanchez-Palencia}, Rigorous and heuristic treatment of certain sensitive singular perturbations, {\it Jour. Math. Pure. et Appli.}  \textbf{88}, 123--147, 2007.


\bibitem{Egorov}Y. V. \textsc{Egorov} and B. W. \textsc{Schulze},
\emph{Pseudo-differential operators, singularities and applications}, Birkhauser,
Berlin 1997.


\bibitem{Georgescu} A.~{\textsc{Georgescu}}   \emph{Asymptotic treatment of differential equations}.
                     App. Math. and Math. Comput., \textbf{9}
Chapman and Hall, London, 1995, p. viii+268.










\bibitem{Huet}D. \textsc{Huet}, Ph\'{e}nom\`{e}nes de perturbation singuli\`{e}re dans les probl\`{e}mes aux limites,
{\it Ann. Inst. Fourier}, vol. 10, p. 61-150, 1960.







\bibitem{KozlovMazya}
V. \textsc{Kozlov}, V.  \textsc{Maz'ya},  A. \textsc{Movchan},
\emph{Asymptotic Analysis of
Fields in Multistructures}. Oxford Science Publications,
1999.


           
\bibitem{MazyaNazarov}
V. \textsc{Maz'ya}, S. \textsc{Nazarov}, B. \textsc{Plamenevskij},
\emph{Asymptotic Theory of Elliptic Boundary Value
Problems in Singularly Perturbed
Domains}, vols. 1-2,
Birkh\"auser, 2000.

\bibitem{MazyaSlutskii}
V. \textsc{Maz'ya}, A. \textsc{Slutskii}.
Asymptotic analysis of the Navier-Stokes system in a plane
domain
with
thin channels.
{\it Asymptot. Anal.}, vol. 23, No. 1, 59-89 (1999).

\bibitem{MeunierSanchez}
N. \textsc{Meunier} and E. \textsc{Sanchez-Palencia}, Sensitive versus classical perturbation problem via Fourier transform, Math. Models Meth.
Appl. Sci.  \textbf{16}, 1783--1816, 2006.


\bibitem{MeunierSanchez2}
N. \textsc{Meunier} and E. \textsc{Sanchez-Palencia}, Integral approach to sensitive singular perturbations, to appear in proccedings.

\bibitem{Nantes} N. \textsc{Meunier}, J. \textsc{Sanchez Hubert} and E.\textsc{ Sanchez Palencia}, Various kinds of sensitive
singular perturbations,  Ann. Blaise Pascal, \textbf{14}, 199--242, 2007.



\bibitem{San89} J. \textsc{Sanchez Hubert} and E.\textsc{Sanchez Palencia},
\emph{Vibration and coupling of continuous systems. Asymptotic methods}, Springer, Berlin 1989.

\bibitem{San1} J. \textsc{Sanchez Hubert} and E. \textsc{Sanchez Palencia}, \emph{Coques {\'e}lastiques minces.
Propri{\'e}t{\'e}s asymptotiques}, Masson, Paris 1997.

\bibitem{San3}E. \textsc{Sanchez Palencia}, On internal and boundary layers with unbounded
energy in thin shells theory. Elliptic case, {\it Asymptot. Anal.}, 36, 169-185, 2003.






\bibitem{Taylor} M.E. \textsc{Taylor}, \emph{Pseudodifferential operators}, Princeton University Press, Princeton, 1981.

\bibitem{Vekua} I. L. \textsc{Vekua}, \emph{Generalised analytic functions}, Pergamon Press, London, 1962.


\bibitem{Vishik} M.I. \textsc{Vishik} and L.A. \textsc{Lusternik}, Regular degeneration and boundary layer for linear differental equations with small parameter, in {\it Akademiya Nauk SSSR i Moskovskoe Matematicheskoe Obshchestvo.           Uspekhi Matematicheskikh Nauk}, \textbf{12},  3-122, 1957.
   
}
\end{thebibliography}
\end{document}